\numberwithin{equation}{section}
\theoremstyle{plain}
\newtheorem{thm}{Theorem}[section]
\newtheorem{cor}[thm]{Corollary}
\newtheorem{prop}[thm]{Proposition}
\newtheorem{lemma}[thm]{Lemma}
\newtheorem{lem}[thm]{Lemma}
\newtheorem{theorem}[thm]{Theorem}
\theoremstyle{definition}
\newtheorem{definition}[thm]{Definition}
\theoremstyle{remark}
\newtheorem*{remark}{Remark}
\newcommand{\nc}{\newcommand}
\newcommand{\rnc}{\renewcommand}
\newcommand{\isom}{\cong}
\newcommand{\proj}{\mathbb{P}}
\newcommand{\Teich}[1][g,n]{\mathcal{T}_{#1}}
\newcommand{\moduli}[1][g,n]{\mathcal{M}_{#1}}
\newcommand{\barmoduli}[1][g,n]{\overline{\mathcal{M}}_{#1}}
\newcommand{\Dev}{\mathrm{Dev}}
\newcommand{\Hol}{\mathrm{Hol}}
\newcommand{\THol}{\widetilde{\Hol}}
\newcommand{\dR}{\mathrm{dR}}
\newcommand{\TA}{\widetilde{\mathcal{A}}_{g,n}}
\newcommand{\TAres}{\widetilde{\mathcal{A}}'_{g,n}}
\newcommand{\ModA}{{\mathcal{A}}_{g,n}}
\newcommand{\ModAres}{{{\mathcal{A}'}_{g,n}}}
\newcommand{\barModA}{{\overline{\mathcal{A}}}_{g,n}}
\newcommand{\Cframe}[1][g,n]{F_{#1}}
\newcommand{\Def}{\mathrm{Def}_1}
\NewDocumentCommand{\deriv}{O{{}} m m}{\frac{d^#1 #2}{d#3^#1}}
\newcommand{\tnabla}{\widetilde{\nabla}}
\newcommand{\tbnabla}{\widetilde{\overline{\nabla}}}
\newcommand{\ra}{\longrightarrow}
\newcommand{\xra}{\mathop{\longrightarrow}^}
\newcommand{\ds}{\displaystyle}
\newcommand{\pderiv}[2]{\frac{\partial #1}{\partial #2}}
\nc{\dmo}{\DeclareMathOperator}
\rnc{\Re}{\operatorname{Re}}
\rnc{\Im}{\operatorname{Im}}
\dmo{\coker}{coker}
\dmo{\rank}{rank}
\dmo{\End}{End}
\dmo{\Hom}{Hom}
\dmo{\Jac}{Jac}
\dmo{\Id}{Id}
\dmo{\Ann}{Ann}
\dmo{\Area}{Area}
\dmo{\CP}{\bC P^1}
\dmo{\Aut}{Aut}
\dmo{\Aff}{Aff}
\dmo{\Sp}{\mathrm{Sp}}
\nc\bA{\mathbb{A}}
\nc\bB{\mathbb{B}}
\nc\bC{\mathbb{C}}
\nc\bD{\mathbb{D}}
\nc\bE{\mathbb{E}}
\nc\bF{\mathbb{F}}
\nc\bG{\mathbb{G}}
\nc\bH{\mathbb{H}}
\nc\bI{\mathbb{I}}
\nc{\bJ}{\mathbb{J}} 
\nc\bK{\mathbb{K}}
\nc\bL{\mathbb{L}}
\nc\bM{\mathbb{M}}
\nc\bN{\mathbb{N}}
\nc\bO{\mathbb{O}}
\nc\bP{\mathbb{P}}
\nc\bQ{\mathbb{Q}}
\nc\bR{\mathbb{R}}
\nc\bS{\mathbb{S}}
\nc\bT{\mathbb{T}}
\nc\bU{\mathbb{U}}
\nc\bV{\mathbb{V}}
\nc\bW{\mathbb{W}}
\nc\bY{\mathbb{Y}}
\nc\bX{\mathbb{X}}
\nc\bZ{\mathbb{Z}}
\nc\cA{\mathcal{A}}
\nc\cB{\mathcal{B}}
\nc\cC{\mathcal{C}}
\rnc\cD{\mathcal{D}}
\nc\cE{\mathcal{E}}
\nc\cF{\mathcal{F}}
\nc\cG{\mathcal{G}}
\rnc\cH{\mathcal{H}}
\nc\cI{\mathcal{I}}
\nc{\cJ}{\mathcal{J}} 
\nc\cK{\mathcal{K}}
\rnc\cL{\mathcal{L}}
\nc\cM{\mathcal{M}}
\nc\cN{\mathcal{N}}
\nc\cO{\mathcal{O}}
\nc\cP{\mathcal{P}}
\nc\cQ{\mathcal{Q}}
\rnc\cR{\mathcal{R}}
\nc\cS{\mathcal{S}}
\nc\cT{\mathcal{T}}
\nc\cU{\mathcal{U}}
\nc\cV{\mathcal{V}}
\nc\cW{\mathcal{W}}
\nc\cY{\mathcal{Y}}
\nc\cX{\mathcal{X}}
\nc\cZ{\mathcal{Z}}
\nc\bfA{\mathbf{A}}
\nc\bfB{\mathbf{B}}
\nc\bfC{\mathbf{C}}
\nc\bfD{\mathbf{D}}
\nc\bfE{\mathbf{E}}
\nc\bfF{\mathbf{F}}
\nc\bfG{\mathbf{G}}
\nc\bfH{\mathbf{H}}
\nc\bfI{\mathbf{I}}
\nc{\bfJ}{\mathbf{J}} 
\nc\bfK{\mathbf{K}}
\nc\bfL{\mathbf{L}}
\nc\bfM{\mathbf{M}}
\nc\bfN{\mathbf{N}}
\nc\bfO{\mathbf{O}}
\nc\bfP{\mathbf{P}}
\nc\bfQ{\mathbf{Q}}
\nc\bfR{\mathbf{R}}
\nc\bfr{\boldsymbol{r}}
\nc\bfS{\mathbf{S}}
\nc\bfT{\mathbf{T}}
\nc\bfU{\mathbf{U}}
\nc\bfV{\mathbf{V}}
\nc\bfW{\mathbf{W}}
\nc\bfY{\mathbf{Y}}
\nc\bfX{\mathbf{X}}
\nc\bfZ{\mathbf{Z}}
\nc\bfmu{\boldsymbol{\mu}}
\nc{\ModX}{\mathrm{Mod}(X)}
\nc{\QuadX}{\mathrm{Quad}(X)}
\nc{\Ztwo}{\mathbb{Z}/2\mathbb{Z}}
\rnc{\Re}{\operatorname{Re}}
\rnc{\Im}{\operatorname{Im}}
\dmo{\kernel}{ker}
\dmo{\cokernel}{coker}
\dmo{\image}{im}
\dmo{\rk}{rk}
\dmo{\rel}{rel}
\dmo{\Twist}{\mathrm{Twist}}
\dmo{\TwistX}{\mathrm{Twist}(X, \omega)}
\dmo{\Pic}{Pic}
\dmo{\Res}{Res}
\dmo{\Spec}{Spec}
\DeclareMathOperator{\ord}{ord}
\newcommand{\cx}{\mathbb{C}}
\newcommand{\reals}{\mathbb{R}}
\newcommand{\zed}{\mathbb{Z}}
\newcommand{\bfm}{\boldsymbol{m}}
\newcommand{\trans}[1][X]{\mathfrak{trans}_{#1}}
\renewcommand{\tilde}{\widetilde}
\newcommand{\Teichmuller}{Teichm\"uller\xspace}
\title{Holonomy of affine surfaces}
\author{Paul Apisa, Matt Bainbridge, Jane Wang}
\date{}
\begin{document}

\maketitle

\begin{abstract}
  We identify the moduli space of complex affine surfaces with the
  moduli space of regular meromorphic connections on Riemann surfaces
  and show that it satisfies a corresponding universal property. As a
  consequence, we identify the tangent space of the moduli space of
  affine surfaces, at an affine surface $X$, with the first
  hypercohomology of a two-term sequences of sheaves on $X$.  In terms
  of this identification, we calculate the derivative and coderivative
  of the holonomy map, sending an affine surface to its holonomy
  character.  Using these formulas, we show that the
  holonomy map is a submersion at every affine surface that is not a
  finite-area translation surface, extending work of Veech
  \cite{veech93}. Finally, we introduce a holomorphic foliation of
  some strata of meromorphic affine surfaces, which we call the
  isoresidual foliation, along whose leave holonomy characters and
  certain residues are constant.  We equip this foliation with a
  leafwise indefinite Hermitian metric, again extending work of Veech.
\end{abstract}

\section{Introduction}

Given a Riemann surface $X$, an \emph{affine structure} on $X$ is an
atlas of charts, compatible with the complex structure, whose
transition functions belong to the group $\Aff(\cx)$ of complex affine
automorphisms of the complex plane.  A \emph{branched affine
  structure} is an affine structure on $X \setminus C$, where
$C\subset X$ is a discrete set of \emph{cone points}, subject to a
certain moderate growth condition on the developing map near cone
points (see \S\ref{sec:affine} for a precise definition).
The \emph{order} of a cone point $c$ is $\tau(\gamma)-1$, where
$\tau(\gamma)$ is the complex turning number of a positively oriented
loop around $c$, generalizing the usual notion of the order of a cone
point of a surface with flat metric.  We will call a cone point a
\emph{zero} if the order of its real part is greater than $-1$ and a \emph{pole}
otherwise.  

A \emph{(branched) affine surface} is a Riemann surface
with a (branched) affine structure\footnote{ In terms of projective structures on Riemann surfaces, our ``branched affine structures" are ``projective structures with regular singularities", which is strictly more general than ``branched projective structures".}.  Since nearly all of the affine
structures we consider are branched, we will usually omit the
adjective ``branched'' in the sequel.

There is a well-known correspondence\footnote{See Rakhimov \cite{Rakhimov} and Novikov-Shapiro-Tahar \cite{NovikovShapiroTahar}. See also Abate-Bianchi \cite{AbateBianchi-MeroConnRS} and Abate-Tovena \cite{Abate-Tovena-VectorField} for a connection to dynamics of polynomial vector fields. } between branched affine structures
on $X$ and holomorphic connections 
\begin{equation*}
  \nabla\colon \Omega_X\to \Omega_X\otimes \Omega_X(C)
\end{equation*}
on the holomorphic cotangent bundle $\Omega_X$ with regular
singularities along $C$ (meaning the connection form may have at worst
simple poles along $C$).   Briefly, this correspondence can be
understood as follows.  The standard connection on the complex plane
is $\Aff(\cx)$-invariant, and so its pullbacks to $X\setminus C$ by
the affine charts glue to a connection over $X \setminus C$.
Conversely, the solutions to the second-order differential equation $\nabla df=0$ define
the atlas of charts for an affine structure on $X\setminus C$. Under
this correspondence, the order of a cone point $c$ is the residue
$\Res_c\nabla = \Res_c\nabla\omega$, for $\omega$ a nonzero holomorphic
one-form near $c$.  We recall the details of this correspondence in
\S\ref{sec:affine}. Following this correspondence, we will
generally consider an affine surface to be a triple $(X, C, \nabla)$,
where $X$ is a closed Riemann surface, $C\subset X$ is a finite set,
and $\nabla$ is a connection on $\Omega_X$ with regular singularities
along $C$.

Affine surfaces notably generalize translation surfaces, surfaces with
an atlas of charts whose transition functions are translations of the
complex plane, or equivalently, Riemann surfaces equipped with a
nonzero meromorphic one-form.  From the connection point of view, an
affine surface is a translation surface if it has a flat (meaning in
the kernel of $\nabla$) meromorphic
one-form.

\paragraph{The moduli space.}

Let $\ModA$ denote the moduli space of genus $g$ affine surfaces with
$n$ cone points, and let $\ModA(\mu)\subset \ModA$ denote the stratum
whose cone points have orders specified by a tuple $\mu$ of complex
numbers summing to $2g-2$.  Likewise, let $\TA$ and $\TA(\mu)$ denote
the corresponding \Teichmuller spaces of marked affine surfaces.  Following ideas of Veech
\cite{veech93}, we constructed $\cA_{g,n}$ as a fiber product in
Apisa-Bainbridge-Wang \cite{ABW} and showed that it is an affine bundle
over the moduli space of pointed curves $\cM_{g,n}$ modeled on the
extended Hodge bundle of meromorphic one-forms with at worst simple
poles at the punctures.

The connection point of view readily generalizes to define the notion
of an affine structure on a stable curve, or more generally on a
family of stable curves.  In \S\ref{sec_families}, we define an
affine structure on a family of stable curves as a relative connection
on the relative dualizing sheaf.  We then construct the moduli space
$\barModA$ of affine stable curves, obtain its universal property, and
show that it extends the affine bundle $\ModA\to \moduli$ to an affine
bundle over the Deligne-Mumford compactification $\barmoduli$:

\begin{thm}\label{T:Families}
  There is an fine moduli space $\barModA$ of affine stable curves
  which is an affine bundle over $\barmoduli$.
\end{thm}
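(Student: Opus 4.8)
The plan is to represent the moduli functor of affine stable curves directly and to exhibit $\barModA$ as a torsor over $\barmoduli$ under the extended Hodge bundle, mirroring the fiber-product construction of $\ModA$ in \cite{ABW} but now over the Deligne--Mumford compactification. To a base $S$ one assigns the groupoid of tuples $(\pi\colon\cX\to S,\sigma_1,\dots,\sigma_n,\nabla)$, where $(\cX\to S,\sigma_i)$ is a stable $n$-pointed genus-$g$ curve and $\nabla$ is a relative connection on the relative dualizing sheaf $\omega_{\cX/S}$ with regular singularities along $D:=\sum_i\sigma_i(S)$, in the sense of \S\ref{sec_families}; forgetting $\nabla$ yields a morphism to $\barmoduli$. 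The structural input — valid over nodal curves precisely because $\omega_{\cX/S}$ remains a line bundle — is that the difference of two such connections is an $\mathcal{O}$-linear map $\omega_{\cX/S}\to\omega_{\cX/S}\otimes\omega_{\cX/S}(D)$, i.e.\ a global section of $\omega_{\cX/S}(D)$. Hence the sheaf on $S$ of relative connections with regular singularities along $D$ is a pseudo-torsor under $\pi_*\bigl(\omega_{\cX/S}(D)\bigr)$; by cohomology and base change the latter is locally free — of rank $g-1+n$ for $n\geq 1$, since $R^1\pi_*\omega_{\cX/S}(D)$ is fiberwise dual to $\pi_*\mathcal{O}_{\cX}(-D)=0$, and of rank $g$ (the ordinary Hodge bundle) for $n=0$. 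Granting that this pseudo-torsor is locally nonempty, it is represented by the associated affine bundle $\barModA\to\barmoduli$, which carries the tautological universal family (hence the universal property) and restricts over $\moduli\subset\barmoduli$ to the affine bundle $\ModA\to\moduli$ of \cite{ABW}.

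Everything therefore reduces to local existence, near a boundary point of $\barmoduli$, of a relative connection on $\omega_{\cX/S}$ with regular singularities along $D$ — the interior being handled in \cite{ABW}. Locally on $\cX$ such connections are immediate: away from the nodes and the sections, take the flat connection in a local frame of $\omega_{\cX/S}$; near a node $\{xy=t\}$, where $\omega_{\cX/S}$ is freely generated by $\nu = dx/x = -dy/y$, take $\nabla\nu = 0$; near a section, frame $\omega_{\cX/S}$ by $dz$ and take any connection form with at worst a simple pole. The obstruction to globalizing over a fiber $\cX_s$ is the logarithmic Atiyah class of $\omega_{\cX_s}$ along $D_s$, which lies in $H^1(\cX_s,\,\Omega^1_{\cX_s}(\log D_s))$ — or its nodal analogue — and this group itself vanishes whenever $D_s\neq\emptyset$, as the residue exact sequence shows for $n\geq 1$. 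Since the formation of the obstruction sheaf commutes with base change, it vanishes on a neighborhood of $s$, so the pseudo-torsor is locally nonempty, hence a torsor. (For $n=0$ one has $g=1$ and the statement is classical, while the range $g\geq 2$, $n=0$ is vacuous.)

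The main obstacle is arranging the definition of ``relative connection with regular singularities along $D$'' so that it behaves well at the nodes — where the sheaf of relative Kähler differentials $\Omega^1_{\cX/S}$ fails to be locally free although $\omega_{\cX/S}$ does not — and checking that, so defined, the notion is stable under base change, so that the cohomology-and-base-change argument above legitimately applies. Once the local model at a node is pinned down, the difference-is-a-relative-one-form computation and the Riemann--Roch/base-change identification of the extended Hodge bundle are routine, and agreement with \cite{ABW} over $\moduli$ is immediate by restriction.
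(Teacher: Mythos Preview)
Your proposal is correct and follows essentially the same line as the paper: both arguments define an affine stable curve as a stable curve equipped with a relative connection on the relative dualizing sheaf with regular singularities along the sections, observe that the sheaf of such connections is a torsor under the extended Hodge bundle $\pi_*\omega_{\cX/S}(D)$, and reduce to showing local nonemptiness via vanishing of the relevant Atiyah class.

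The one substantive difference is in how local existence is established. The paper proves a general statement (Theorem~\ref{T:UniversalProperty}) over an arbitrary base $B$ by passing to a Stein cover: over Stein $B$ the Atiyah class lies in $H^1(\cX,\Omega_{\cX/B}(\cC))$, relative duality gives $R^1\pi_*\Omega_{\cX/B}(\cC)=0$, the Leray spectral sequence then identifies this $H^1$ with $H^1(B,\pi_*\Omega_{\cX/B}(\cC))$, and Cartan's Theorem~B kills it. You instead argue fiberwise and invoke base change. Both work; the paper's route avoids the base-change step and gives existence of a global connection over any Stein base in one stroke, while yours is slightly more functorial in flavor. Your acknowledgment that one must use $\omega_{\cX/S}$ rather than $\Omega^1_{\cX/S}$ at the nodes is exactly the point the paper builds into its definition from the start, so your ``nodal analogue'' caveat should simply read $H^1(\cX_s,\omega_{\cX_s}(D_s))$, which is what Serre duality pairs with $H^0(\cO_{\cX_s}(-D_s))=0$.
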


While $\barModA$ is not compact, as the fibers of this affine bundle are
not compact, there is a natural smooth compactification of
$\ModA$ obtained by adjoining to $\barModA$ the bundle of projective
spaces at infinity over $\barmoduli$.

\paragraph{The tangent bundle.}

By the universal property of $\ModA$, each tangent space
$T_{(X, C, \nabla)}\ModA$ is naturally isomorphic to the space of
first-order deformations of $(X, C, \nabla)$.  In \cite[Proposition 4.5]{HubbardMasur},
Hubbard and Masur calculated the tangent bundle to the bundle of quadratic differentials
over the moduli space of Riemann surfaces by identifying the
first-order deformations of a pair $(X, q)$ with the first
hypercohomology of the two-term complex $T_X\to \Omega_X^{2}$, where the
differential sends a vector field $v$ to the Lie derivative
$L_v(q)$ (see also \cite{moller_linear} and \cite{BCGGM2} for similar
computations for strata of $k$-differentials).  In \S\ref{sec:deformations}, we give a similar
calculation of the first-order deformations of an affine surface.
Given an affine surface, consider the two-term complex of sheaves
$L^\bullet$,
\begin{equation}
  \label{eq:2term2complex}
  \xymatrix{
    T_X(-C) \ar[r]^{\mathcal{L}_\nabla} & \Omega_X(C)
  },
\end{equation}
where again $\mathcal{L}_\nabla(v)$ is the Lie derivative of $\nabla$
with respect to $v$.

\begin{thm}
  \label{T:DerivativeOfHol}
  There is a natural identification of the tangent space of $\ModA$
  at $(X, C, \nabla)$ with the hypercohomology $\bH^1(L^\bullet)$.
\end{thm}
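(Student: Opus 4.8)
The plan is to leverage the universal property of $\ModA$ established earlier (in the construction of \cite{ABW} and in Theorem~\ref{T:Families}): the tangent space $T_{(X,C,\nabla)}\ModA$ is canonically the space of first-order deformations of the triple $(X,C,\nabla)$, i.e.\ isomorphism classes of affine structures on a family over $\Spec\cx[\e]/(\e^2)$ restricting to $(X,C,\nabla)$ over the closed point, where $C$ is allowed to move. So the real content is to identify this deformation functor with $\bH^1(L^\bullet)$ for the complex \eqref{eq:2term2complex}. First I would fix a finite affine open cover $\{U_i\}$ of $X$ adapted to $C$ (each $U_i$ meeting $C$ in at most one point, with a chosen local coordinate), and unwind a first-order deformation into \v{C}ech data: on each $U_i$ one records a first-order deformation of $(U_i, C\cap U_i, \nabla|_{U_i})$ — which, since deformations of a disk with one marked point are unobstructed and the connection can be trivialized, is encoded by a section of $\Omega_X(C)$ on $U_i$ capturing the deformation of the connection form — and on overlaps $U_{ij}$ one records the comparison isomorphism, which is a vector field in $T_X(-C)(U_{ij})$ (the $-C$ twist reflecting that the gluing must fix the marked points to first order). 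The cocycle condition on triple overlaps and the coboundary relation from changing the trivializations are exactly the conditions defining a hypercohomology class in $\bH^1$ of the two-term complex $[\,T_X(-C)\xrightarrow{\cL_\nabla}\Omega_X(C)\,]$, with $\cL_\nabla$ appearing precisely because changing the gluing vector field $v$ alters the deformed connection form by the Lie derivative $\cL_\nabla(v)$.

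Concretely I would set up the map in both directions. Given a deformation, the construction above produces a \v{C}ech hypercocycle, and one checks that refining the cover or changing the chosen local trivializations changes the hypercocycle by a coboundary, giving a well-defined class in $\bH^1(L^\bullet)$; this uses that the deformation theory of a marked disk with connection is rigid in the appropriate sense, so all the ``local'' ambiguity lives in the $C^0$ term and contributes coboundaries. Conversely, given a hypercocycle $(\{v_{ij}\},\{\phi_i\})$ with $v_{ij}\in T_X(-C)(U_{ij})$ and $\phi_i\in\Omega_X(C)(U_i)$ satisfying $\phi_j-\phi_i=\cL_\nabla(v_{ij})$ on $U_{ij}$ and the cocycle condition on $U_{ijk}$, one reglues the trivial first-order deformations of the $U_i$ (as Riemann surfaces with a marked point) using $1+\e v_{ij}$ to build a family $\cX_\e\to\Spec\cx[\e]$, and patches the locally trivial connections twisted by $\e\phi_i$ into a global relative connection with regular singularities along the moved $C$; the cocycle identity is exactly what makes these patch. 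The two constructions are visibly inverse to one another. I would also record that this identification is natural — compatible with restriction to sub-loci and with the maps to $\cM_{g,n}$ — which follows because the \v{C}ech description is functorial, and this naturality is what makes the identification canonical rather than merely an abstract isomorphism of vector spaces; note the Euler characteristic check, $\dim\bH^1(L^\bullet)-\dim\bH^0(L^\bullet)+\dim\bH^2(L^\bullet)=\chi(T_X(-C))-\chi(\Omega_X(C))$, recovers the expected dimension $4g-3+2n$ of $\ModA$ once one verifies $\bH^0$ and $\bH^2$ vanish (the latter since affine surfaces that are not finite-area translation surfaces are unobstructed — and one can arrange $\nabla$ to be of that type, or handle the general case by semicontinuity).

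The main obstacle I expect is the careful bookkeeping at the cone points: one must check that the ``moderate growth'' / regular-singularity condition on the developing map is open under first-order deformation and corresponds exactly to allowing the connection-form perturbation $\phi_i$ to have at worst a simple pole along $C$ (hence the twist $\Omega_X(C)$ rather than $\Omega_X$), and that the gluing vector fields must vanish along $C$ (hence $T_X(-C)$ rather than $T_X$) because the deformation is required to keep track of the cone points as marked sections. Getting these two twists precisely right — and verifying that the differential is genuinely the Lie derivative $\cL_\nabla$ with the correct sign and normalization when expressed in local coordinates near a cone point, where $\nabla$ has a simple pole — is the delicate part; the rest is a standard \v{C}ech-theoretic deformation argument parallel to Hubbard--Masur \cite[Proposition 4.5]{HubbardMasur}, with $\Omega_X^2$ replaced by $\Omega_X(C)$ and $T_X$ by $T_X(-C)$.
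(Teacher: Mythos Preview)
Your proposal is correct and essentially identical to the paper's argument: reduce via the universal property to computing first-order deformations, then encode a deformation over $\Spec\cx[\epsilon]/(\epsilon^2)$ by the \v{C}ech hypercocycle $(\chi_{\alpha\beta}, \eta'_\alpha)\in C^1(T_X(-C))\oplus C^0(\Omega_X(C))$ of Kodaira--Spencer gluing vector fields and connection-form perturbations, verifying that the gluing and change-of-trivialization conditions are exactly $D^1(\chi,\eta')=0$ and the $D^0$-coboundary relation (Lemmas~\ref{L:ExistenceOfCocycle} and~\ref{L:UniquenessOfCocycle}). One small aside: your parenthetical that $\bH^2(L^\bullet)$ vanishes because ``non-translation affine surfaces are unobstructed'' is not the right reasoning---it vanishes unconditionally by Serre duality (Lemma~\ref{L:HypercohomologyOfLComputation})---but this dimension check is not needed for the identification itself.
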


\paragraph{Holonomy and turning numbers.}

Given an affine surface $(X, C, \nabla)$ and a closed curve
$\gamma\colon[0,1]\to X\setminus C$, the \emph{holonomy} of $\nabla$
around $\gamma$ is $\chi(\gamma)= v(1)/v(0)\in \cx^*$, where $v$ is a
flat vector field along $\gamma$.  This defines a holonomy character
$\chi\in H^1(X\setminus C, \cx^*)$.  Passing to the \Teichmuller space
$\TA$ of affine surfaces $(X, C, \nabla)$ with a marking
$(\Sigma, S)\to (X, C)$, we obtain a holonomy map
$\Hol\colon \TA\to H^1(\Sigma\setminus S, \cx^*)$.

There is a notion of the \emph{turning number} $\tau(\gamma)$
of a closed immersed curve $\gamma$ in $X\setminus C$, generalizing
the standard notion of the turning number of a curve in a surface with
a flat metric, which is related to the holonomy by
\begin{equation*}
  e^{2\pi i \tau(\gamma)} = \chi(\gamma).
\end{equation*}
These turning numbers can be encoded as a class in the unit circle
bundle $T^1(X\setminus C)$.  The homotopy exact sequence of $T^1(\Sigma\setminus C)\to \Sigma\setminus C$
induces the short exact sequence
\begin{equation*}
  0 \to H^1(\Sigma \setminus S, \cx) \to H^1(T^1(\Sigma\setminus S),
  \cx) \to H^1(S^1, \cx) \to 0.
\end{equation*}
Let $\Cframe\subset H^1(T^1(\Sigma \setminus S), \cx)$ denote the
preimage of the fundamental class of the fiber $S^1$, which we call
the space of \emph{complex framings} of $(\Sigma, S)$.  The exact
sequence shows that $\Cframe$ is an affine space modeled on
$H^1(\Sigma \setminus S, \cx)$, and moreover there is a covering map
$\mathrm{exp}_*\colon \Cframe\to H^1(\Sigma\setminus S, \cx^*)$
induced by exponentiation of coefficients.  An affine surface has an
associated framing which assigns the turning number $\tau(\gamma)$ to
the canonical lift $\tilde{\gamma}$ of an immersed curve $\gamma$ to
$T^1(X\setminus C)$.  Recording the framings of each affine surface
defines a map $\THol\colon \TA\to \Cframe$ which is a lift of $\Hol$
in the sense that $\exp_*\circ \THol = \Hol$.

The terminology ``framing'' refers to a result
of Johnson \cite{johnson} establishing an equivalence between
integral framings and trivializations of $T(\Sigma \setminus S)$ up to homotopy.

In \S\ref{sec:derivativeofholonomy}, we calculate the derivative
of $\THol$. Since $\Cframe$ is an affine space over
$H^1(\Sigma\setminus S, \cx)$, its tangent spaces are naturally
identified with
$H^1(X\setminus C, \cx)\isom \bH^1(\Omega_X^\bullet(\log C))$ by the
Algebraic de Rham Theorem, where $\Omega_X^\bullet(\log C)$ is the
logarithmic de Rham complex.  In terms of this and the identification
of $T_{(X,C, \nabla)}\TA$ with $\bH^1(L^\bullet)$, we have:

\begin{theorem}
  \label{thm:holonomy_derivative_intro}
  The derivative $D\THol$ at the point representing an affine surface
  $(X, C, \nabla)$ is $\frac{1}{2\pi i}$ times the map
  $\mathbb{H}^1(L^\bullet)\to \mathbb{H}^1(\Omega_X(\log C))$ induced
  by the following morphism of complexes.
  \begin{equation*}
    \xymatrix{
      \Omega_X(C) \ar[r]^-{-\mathrm{id}} & \Omega_X(C) \\
      T_X(-C) \ar[r]^-{\nabla^*}\ar[u]^{\mathcal{L}_\nabla}  & \mathcal{O}_X \ar[u]^d
    }
  \end{equation*}
\end{theorem}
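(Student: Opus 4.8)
The basic idea is to trace through the identification of $T_{(X,C,\nabla)}\TA$ with $\bH^1(L^\bullet)$ (Theorem~\ref{T:DerivativeOfHol}) on the level of \v{C}ech cocycles for a suitable open cover, and to compute what a first-order deformation of the affine structure does to the parallel transport along loops. Concretely, I would fix a good cover $\{U_i\}$ of $X$ adapted to $C$ (each $U_i$ a disk, each containing at most one cone point), together with developing maps / affine charts $\phi_i$ whose transition maps $\phi_i\circ\phi_j^{-1}\in\Aff(\cx)$ encode the holonomy. A first-order deformation of $(X,C,\nabla)$ is represented by a hypercohomology class: on overlaps $U_{ij}$ a vector field $v_{ij}\in T_X(-C)$ (the deformation of complex structure / gluing), and on each $U_i$ a section of $\Omega_X(C)$ (the deformation of the connection form $\eta_i$ in that chart), satisfying the cocycle condition with respect to $\mathcal L_\nabla$. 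The holonomy character only sees the $\Aff(\cx)$-part of the transition data, so I would compute, to first order, how the linear part (the multiplier) of each transition map $\phi_i\circ\phi_j^{-1}$ varies. This is where the morphism of complexes in the statement must come from.

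The key computational step: in an affine chart the connection is $d$ and a flat coordinate is a solution of $\nabla df=0$; the holonomy around a loop is the ratio of the derivative of such a solution at the two endpoints, equivalently $\exp\!\int\eta$ where $\eta$ is the connection form. So, morally, $\log\chi(\gamma)=\int_\gamma\eta$, and a deformation $\dot\eta$ of $\eta$ contributes $\int_\gamma\dot\eta$, while a deformation $v$ of the underlying curve contributes $\int_\gamma \iota_v(\text{curvature-type term})$ — but $\nabla$ is flat away from $C$, so the curve-deformation contribution comes only through the change of the cocycle, i.e. through $\mathcal L_\nabla(v)=d(\iota_v\eta)+\iota_v\,d\eta$ localized appropriately, together with the pairing $\nabla^*\colon T_X(-C)\to\cO_X$ which is $v\mapsto \iota_v\eta = \langle \nabla, v\rangle$ (the ``value'' of the connection form on $v$), and $d\colon\cO_X\to\Omega_X(C)$ recording how that affects the log-derivative across charts. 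I would set this up so that the source double complex computing $\bH^1(L^\bullet)$ maps, via $(\mathcal L_\nabla, \nabla^*, d, -\mathrm{id})$, to the \v{C}ech bicomplex computing $\bH^1(\Omega_X^\bullet(\log C)) = H^1(X\setminus C,\cx)$, and check this is exactly the variation of $\THol$. The factor $\frac{1}{2\pi i}$ is the logarithmic derivative normalization ($\chi = e^{2\pi i\tau}$, and $\Cframe$ is built from turning numbers $\tau$, so $D\THol$ records $\frac{1}{2\pi i}\,d\log\chi$), and the sign on $-\mathrm{id}$ comes from orientation/convention in how the connection form's variation enters parallel transport.

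The two things I would be most careful about — and which I expect to be the main obstacle — are (i) keeping the \v{C}ech/hypercohomology bookkeeping consistent, in particular making sure the morphism of complexes is well-defined on sheaves with the stated poles/zeros (the shift $T_X(-C)$ vs. $\Omega_X(C)$ vs. $\cO_X$ has to be matched with the log-pole behavior of $\nabla^*$ and of $d\colon\cO_X\to\Omega_X(\log C)\hookrightarrow\Omega_X(C)$), and (ii) verifying that the map really lands in, and is compatible with, the Algebraic de Rham identification $\bH^1(\Omega_X^\bullet(\log C))\cong H^1(X\setminus C,\cx)$ used to identify $T\Cframe$ — i.e. matching the algebraic construction with the topological definition of $\THol$ via turning numbers of canonical lifts. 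I would handle (ii) by reducing to the purely topological statement: the derivative of holonomy of a flat connection is the standard variation-of-holonomy formula $\gamma\mapsto\int_\gamma(\text{deformation of connection 1-form})$, and then matching that integral against the de Rham representative coming from the bicomplex. Once both normalizations are pinned down, the theorem should follow by comparing the two explicit cocycle formulas.
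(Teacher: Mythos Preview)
Your plan is conceptually reasonable but misses the paper's key simplification, and without it your direct approach will be hard to make precise. The heuristic ``$\log\chi(\gamma)=\int_\gamma\eta$'' is where you would get stuck: the connection form $\eta_\alpha$ depends on the local trivialization $\omega_\alpha$, so it is not a global one-form and its integral along $\gamma$ is not well-defined as stated. You would have to build a \v{C}ech--de~Rham representative of the holonomy class by hand and then differentiate it, which is exactly the bookkeeping you flagged as your main worry in (ii).

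The paper sidesteps this by introducing an auxiliary \emph{global} meromorphic one-form $\zeta$ on the family, with zeros and poles along $\cC$, and the global meromorphic form $\theta=\nabla\zeta/\zeta$. Proposition~\ref{prop:turning de Rham} then gives the exact identity
\[
  \tau(\gamma)=\tau_\zeta(\gamma)-\frac{1}{2\pi i}\int_\gamma\theta,
\]
where $\tau_\zeta(\gamma)\in\bZ$ is the turning number for the translation structure defined by $\zeta$ and is therefore constant in the deformation. This reduces $D\THol$ to $-\frac{1}{2\pi i}$ times the derivative of the de~Rham class of $\theta$, i.e.\ to a Gauss--Manin computation. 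The paper packages that computation as Proposition~\ref{prop:GaussManin}, which gives the derivative of $\dR\colon\Omega'\Teich\to H^1(\Sigma\setminus S;\cx)$ as the hypercohomology map induced by $(i_\omega,\mathrm{id})\colon M^\bullet\to\Omega_X^\bullet(\log C)$. The final step is a short cocycle calculation showing that the Gauss--Manin representative $(\iota_\chi\theta,\theta')$ is cohomologous to $(-\nabla^*\chi,\eta')$ in $\bH^1(\Omega_X^\bullet(\log C))$; this is where the auxiliary functions $h_\alpha$ (from $\zeta=h_\alpha\omega_\alpha$) cancel out and the answer becomes intrinsic.

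What the paper's route buys you: (a) the globalization problem is solved once, cleanly, by $\zeta$; (b) the analytic input (horizontally analytic trivialization, differentiating an honest integral) is isolated in Proposition~\ref{prop:GaussManin} and reused; (c) your concern (ii) about matching the algebraic map with the topological $\THol$ is handled by the turning-number identity rather than by unwinding the Algebraic de~Rham isomorphism. Your direct approach could in principle be pushed through, but you would essentially be reproving Proposition~\ref{prop:GaussManin} inside the argument while simultaneously managing the non-global $\eta_\alpha$; the paper's factorization through $\zeta$ and Gauss--Manin is the missing idea.
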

The above horizontal arrow $\nabla^*\colon T_X(-C) \to \mathcal{O}_X$ is
shorthand for the composition of the dual connection $\nabla^*\colon
T_X(-C)\to T_X(-C)\otimes \Omega_X(C)$ with the contraction
$T_X(-C)\otimes \Omega_X(C)\to \mathcal{O}_X$.  The commutivity of this diagram follows from Proposition~\ref{prop:LieDerivative} below.

In \S\ref{sec:submersion}, we derive from
Theorem~\ref{thm:holonomy_derivative_intro} a simple formula for
the coderivative of $\THol$:

\begin{theorem}
  \label{T:coderivative}
  The coderivative $D^*\THol$ at an affine surface $(X, C, \nabla)$ fits in the diagram,
  \begin{equation*}
    \xymatrix{
      0 \ar[r] & H^0(\Omega_X) \ar[r]\ar[d]^{-\frac{1}{2\pi i}\nabla} & H_1(X \setminus C, \cx) \ar[r]\ar[d]^{D^*\THol} & H^1(\mathcal{O}_X(-C))\ar[r]\ar[d]^{-\frac{1}{2\pi i} \mathrm{id}} & 0\\
      0 \ar[r] & H^0(\Omega_X^2(C)) \ar[r] & T_{(X,C,\nabla)}^*\TA \ar[r] & H^1(\mathcal{O}_X(-C))\ar[r] & 0}
  \end{equation*}
\end{theorem}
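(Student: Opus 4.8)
The plan is to obtain the coderivative $D^*\THol$ as the linear-algebraic dual of the map $D\THol$ described in Theorem~\ref{thm:holonomy_derivative_intro}, and then to identify the resulting map on the dual hypercohomology groups by means of Serre duality. First I would recall that $D\THol$ is $\tfrac{1}{2\pi i}$ times the hypercohomology map $\bH^1(L^\bullet)\to\bH^1(\Omega_X^\bullet(\log C))$ induced by the morphism of two-term complexes exhibited there, with source $L^\bullet = [\,T_X(-C)\xrightarrow{\cL_\nabla}\Omega_X(C)\,]$ (in degrees $0,1$) and target the logarithmic de Rham complex $\Omega_X^\bullet(\log C) = [\,\cO_X\xrightarrow{d}\Omega_X(C)\,]$. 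Dualizing and twisting by $\Omega_X$, the Serre-dual complex of $L^\bullet$ is $[\,\cO_X(-C)\to \Omega_X^2(C)\,]$ placed in degrees $-1,0$, whose hypercohomology in the relevant degree is $T^*_{(X,C,\nabla)}\TA$; likewise the Serre-dual of $\Omega_X^\bullet(\log C)$ is (up to the standard sign) the complex $[\,\cO_X(-C)\to\Omega_X^2(C)\,]$ computing $H_1(X\setminus C,\cx)$, via the perfect pairing between $H^1(X\setminus C,\cx)$ and $H_1(X\setminus C,\cx)$. So $D^*\THol$ is $\tfrac{1}{2\pi i}$ times the hypercohomology map induced by the transpose morphism of complexes.

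The next step is to write down this transpose morphism explicitly and read off the three vertical arrows in the diagram. The morphism of complexes in Theorem~\ref{thm:holonomy_derivative_intro} is, in the two degrees, the pair $(\nabla^*\colon T_X(-C)\to\cO_X,\ -\mathrm{id}\colon\Omega_X(C)\to\Omega_X(C))$. Applying $\cHom(-,\Omega_X)$ termwise turns $-\mathrm{id}$ on $\Omega_X(C)$ into $-\mathrm{id}$ on $\cO_X(-C)$, which is exactly the claimed right-hand vertical arrow $-\tfrac{1}{2\pi i}\mathrm{id}$ on $H^1(\cO_X(-C))$; and it turns the contracted dual connection $\nabla^*\colon T_X(-C)\to\cO_X$ into its transpose $\cO_X(-C)\to\Omega_X^2(C)$, which unwinds to $-\nabla\colon H^0(\Omega_X)\to H^0(\Omega_X^2(C))$ — the sign and the identification $H^0(\Omega_X^2(C)) = H^0(\Omega_X\otimes\Omega_X(C))$ being precisely where the dual-connection bookkeeping of \S\ref{sec:affine} is used. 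The short exact sequences of complexes $0\to \Omega_X(C)[-1]\to L^\bullet\to T_X(-C)\to 0$ (and its logarithmic analogue) induce the horizontal rows upon passing to hypercohomology; naturality of these connecting maps under the morphism of complexes gives the commutativity of the whole diagram. The outer vertical maps having been identified, commutativity pins down $D^*\THol$ on the middle term up to the ambiguity measured by $H^0(\Omega_X^2(C))\to T^*\TA$, which the left square resolves.

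The main obstacle I anticipate is bookkeeping the Serre-duality identifications compatibly: one must check that the duality pairing $\bH^1(L^\bullet)\times \bH^0([\cO_X(-C)\to\Omega_X^2(C)])\to \cx$ restricts on the sub- and quotient pieces to the standard Serre pairings $H^0(\Omega_X)\times H^1(\cO_X)\to\cx$ (twisted appropriately) and $H^1(\cO_X(-C))\times H^0(\Omega_X^2(C))\to\cx$, with consistent signs, and that the hyperduality between $H^1(X\setminus C,\cx)$ and $H_1(X\setminus C,\cx)$ matches the topological intersection pairing so that the middle term genuinely becomes $H_1(X\setminus C,\cx)$ rather than its dual. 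Tracking the factors of $2\pi i$ and the signs through the log-de~Rham comparison and the transpose is delicate but routine once the conventions are fixed; I would handle it by fixing the pairing on $\Omega_X^\bullet(\log C)$ first, deducing the one on $H_1$, and then letting all other signs be forced. A secondary point is to confirm that $\nabla^*$ contracted against the canonical section really produces $\nabla$ acting $\Omega_X\to\Omega_X\otimes\Omega_X(C)$ with the stated sign; this is immediate from the definition of the dual connection but worth stating as a lemma.
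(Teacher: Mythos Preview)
Your proposal is correct and follows essentially the same approach as the paper: the paper likewise obtains the coderivative by writing $D\THol$ as the middle map in a morphism of short exact sequences coming from the truncation filtrations of $L^\bullet$ and $\Omega_X^\bullet(\log C)$, and then dualizing termwise via Serre duality. The one explicit computation the paper carries out is precisely the ``secondary point'' you flag --- that the Serre-dual of $\nabla^*\colon H^1(T_X(-C))\to H^1(\cO_X)$ is $-\nabla$ --- which it verifies via the Leibniz identity $d\langle\omega,v\rangle = \langle\nabla\omega,v\rangle + \langle\omega,\nabla^* v\rangle$ together with the vanishing of $d\colon H^1(\cO_X)\to H^1(\Omega_X)$.
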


The top row of this diagram is the Hodge filtration, and the bottom row arises from the affine bundle $\ModA\to\moduli$.

\paragraph{The isoholonomic foliation.}

Restricting to affine surfaces without poles, Veech \cite{veech93}
showed that $\Hol$ is a submersion away from the locus $\Hol^{-1}(1)$ of affine
surfaces with trivial holonomy (that is, translation surfaces).  As an
application of the coderivative formula, we
generalize Veech's result to arbitrary affine surfaces.

\begin{thm}\label{T:Veechsubmersion}
  $\Hol$ is a submersion at an affine surface $X$ if and only if $X$
  is not a finite-area translation surface.
\end{thm}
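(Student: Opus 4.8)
The plan is to deduce the statement directly from the coderivative formula of Theorem~\ref{T:coderivative}: the main content is already in that formula, and what remains is a diagram chase followed by a translation of the resulting linear-algebra condition into geometry.

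First I would pass from $\Hol$ to its lift $\THol\colon\TA\to\Cframe$. Since $\exp_*\colon\Cframe\to H^1(\Sigma\setminus S,\cx^*)$ is a covering map, hence a local diffeomorphism, and $\exp_*\circ\THol=\Hol$, the map $\Hol$ is a submersion at the point representing $X$ if and only if $\THol$ is; and since all the spaces involved are finite-dimensional, $\THol$ is a submersion there if and only if its coderivative $D^*\THol$ is injective. Now I would invoke Theorem~\ref{T:coderivative}. The right-hand vertical map in that diagram, $-\frac{1}{2\pi i}\,\mathrm{id}\colon H^1(\cO_X(-C))\to H^1(\cO_X(-C))$, is an isomorphism, so the snake lemma applied to this morphism of short exact sequences yields $\ker D^*\THol\cong\ker\bigl(\nabla\colon H^0(\Omega_X)\to H^0(\Omega_X^2(C))\bigr)$. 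Hence $\Hol$ is a submersion at $X$ if and only if $X$ carries no nonzero flat holomorphic one-form.

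It then remains to match this condition with the statement of the theorem. If $\omega\in H^0(\Omega_X)$ is flat and nonzero, then $\omega$ has no poles, it can vanish only where $\nabla$ is singular so that its zeros lie in $C$ and have orders equal to the corresponding residues of $\nabla$, and $\nabla$ is recovered as the unique connection on $\Omega_X$ annihilating $\omega$; thus $(X,C,\nabla)$ is the (possibly marked) finite-area translation surface $(X,\omega)$. Conversely, a finite-area translation surface is by definition a Riemann surface equipped with a nonzero holomorphic one-form in the kernel of its connection, so for such a surface $\nabla$ is not injective on $H^0(\Omega_X)$. (As a consistency check, this kernel is at most one-dimensional: writing two flat forms as $\omega_1=g\,\omega_2$ gives $0=\nabla\omega_1=dg\otimes\omega_2$, so $g$ is constant.) Combining the three steps proves the theorem. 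The only genuinely delicate point is this last identification — confirming that among all affine surfaces it is precisely the finite-area translation surfaces that possess a flat \emph{holomorphic} (as opposed to merely meromorphic) one-form, so that finiteness of area is detected by holomorphicity of the flat form.
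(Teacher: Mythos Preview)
Your proposal is correct and follows essentially the same approach as the paper's proof: both deduce the result from Theorem~\ref{T:coderivative} by observing that surjectivity of $D\THol$ is equivalent to injectivity of $\nabla\colon H^0(\Omega_X)\to H^0(\Omega_X^2(C))$, and then identify the existence of a flat holomorphic one-form with $X$ being a finite-area translation surface. Your write-up simply fills in the details (the passage from $\Hol$ to $\THol$, the snake lemma, and the geometric identification) that the paper leaves implicit in its one-paragraph argument.
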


\begin{proof}
  By the coderivative formula, surjectivity of $D\THol$ at an affine
  surface $(X,C, \nabla)$ is equivalent to the injectivity of
  $\nabla\colon \Omega_X\to \Omega_X^2(C)$.  In other words, $D\THol$
  fails to be surjective exactly when there is a flat holomorphic
  one-form on $X$.
\end{proof}

 Theorem \ref{T:Veechsubmersion} also follows from work on
the monodromy of projective structures, as we discuss below.

On the level of the
moduli space $\ModA$, it follows that the fibers of $\Hol$ descend to
an \emph{isoholonomic foliation} of the complement of the locus of
finite area translation surfaces.

\paragraph{The isoresidual foliation.}

For strata $\ModA(\mu)$ with at least one pole of integral order, we
introduce in \S\ref{sec:isoresidual} a subfoliation of the
isoholonomic foliation, the \emph{isoresidual foliation}, along whose
leaves the residues of flat one-forms defined near integral poles are 
projectively constant. We will see that this defines a
holomorphic foliation of the locus, $\ModAres(\mu)\subset\ModA(\mu)$
(or $\TAres(\mu)\subset\TA(\mu)$) of surfaces which are not
translation surfaces and have some integral pole of nonzero residue.

Given an affine surface $(X, C, \nabla)$ in a stratum $\ModA(\mu)$
with at least one integral pole, let
$P_{\zed}=\{p_1, \ldots, p_k\} \subset C$ denote the set of integral
poles.    Choose a tree
$\Gamma=\{\gamma_2, \ldots, \gamma_k\}$, where $\gamma_j$ is an
embedded arc from $p_1$ to $p_j$.  Let $\omega_\Gamma$ be a flat one-form
defined on a neighborhood of $\Gamma$.  Since $\omega_\Gamma$ is only
defined up to scale, the tuple of residues of $\omega_\Gamma$
naturally belongs to the projectivization
$\proj^{P_\zed}=\proj(\cx^{P_\zed})$.  Drawing the tree $\Gamma$
instead on the marking surface $\Sigma$, this defines a holomorphic
function $\Res_\Gamma\colon \TAres(\mu)\to\proj^{P_\zed}$.

\begin{thm}\label{T:SubmersionWithResidue}
  The map
  $\Hol\times \Res_\Gamma\colon \TAres(\mu)\to H^1(\Sigma\setminus
  S;\cx^*)\times \proj^{P_\zed}$ is a submersion whose fibers are a
  well-defined foliation of 
  $\TAres(\mu)$ which does not depend on the choice of $\Gamma$ and
  which descends to a holomorphic foliation of $\ModAres(\mu)$.  
\end{thm}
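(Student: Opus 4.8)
The plan is to deduce everything from the coderivative formula of Theorem \ref{T:coderivative} together with an explicit description of the differential of $\Res_\Gamma$. First I would fix an affine surface $(X, C, \nabla)\in\TAres(\mu)$ and reinterpret the two maps infinitesimally. For $\Hol$ (equivalently $\THol$, since $\exp_*$ is a local diffeomorphism) the derivative is the composite of the map in Theorem \ref{thm:holonomy_derivative_intro} with the forgetful projection $\Cframe\to H^1(\Sigma\setminus S,\cx^*)$; by Theorem \ref{T:coderivative}, surjectivity of $D\Hol$ is equivalent to injectivity of $\nabla\colon\Omega_X\to\Omega_X^2(C)$, which holds here because $X$ is not a translation surface (it has no flat holomorphic one-form). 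So $\Hol$ alone is already a submersion on $\TAres(\mu)$, and the content is to control the extra directions cut out by $\Res_\Gamma$.

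Next I would compute $D\Res_\Gamma$. Near an integral pole $p_j$ a flat one-form $\omega_\Gamma$ exists and is unique up to a global scalar (once we fix it along the tree $\Gamma$); the residue $r_j=\Res_{p_j}\omega_\Gamma$ depends holomorphically on the surface, and a first-order deformation in $\bH^1(L^\bullet)$ changes $\omega_\Gamma$ — hence each $r_j$ — in a way computable from the local description of $\nabla$ at $p_j$. The key point is that, modulo the overall scaling ambiguity that the projectivization $\proj^{P_\zed}$ precisely kills, the deformation of the residue vector $(r_1,\dots,r_k)$ is detected by the components of the deformation class supported near the integral poles $p_2,\dots,p_k$, i.e. by the ``polar part'' of the cocycle. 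Concretely, I expect the differential of $\Res_\Gamma$ at the chosen basepoint to factor through the natural map $\bH^1(L^\bullet)\to\bigoplus_{j}(\text{local contribution at }p_j)$ and to be surjective onto $T\proj^{P_\zed}$, because one can realize arbitrary local residue changes by deformations of $\nabla$ concentrated in small disks about the poles. To make this rigorous I would use a Mayer–Vietoris / Čech presentation of $\bH^1(L^\bullet)$ adapted to a cover by small disks around the cone points and their complement, exactly as in the proof of Theorems \ref{T:DerivativeOfHol} and \ref{thm:holonomy_derivative_intro}.

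Then I would prove joint surjectivity of $D(\Hol\times\Res_\Gamma)$. The crucial observation is a \emph{compatibility}: deformations supported near an integral pole $p_j$ change the residue but do \emph{not} change the holonomy character around loops away from that pole — and the holonomy around a small loop encircling $p_j$ is $e^{2\pi i m_j}$, determined by the integral order $m_j$, hence infinitesimally rigid. So the ``residue-changing'' deformations live in $\ker D\Hol$, which means $D\Hol$ and $D\Res_\Gamma$ can be made surjective simultaneously: pick a lift of any tangent vector under $D\Hol$ (possible since $D\Hol$ is already onto), then correct it by a residue-supported deformation to hit any prescribed value of $D\Res_\Gamma$ without disturbing the $\Hol$-component. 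This gives the submersion statement. Well-definedness of the fibers as a foliation is then automatic from the constant-rank/submersion property (the fibers are smooth submanifolds of constant codimension $\dim H^1(\Sigma\setminus S,\cx^*) + (k-1)$).

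Finally, independence of $\Gamma$ and descent to $\ModAres(\mu)$: changing $\Gamma$ replaces $\omega_\Gamma$ by the flat one-form obtained by analytic continuation along a different path, which multiplies the residue vector by a nonzero scalar depending (locally, holomorphically) on the surface via the holonomy along the loop formed by the two trees; since we have projectivized, $\Res_\Gamma$ and $\Res_{\Gamma'}$ differ by post-composition with a holomorphic reparametrization that is constant on the fibers of $\Hol$, so the joint level sets coincide. For descent, the mapping-class group acts compatibly on $\TAres(\mu)$, on $H^1(\Sigma\setminus S,\cx^*)$, and — up to the same scaling — on $\proj^{P_\zed}$ (permuting the integral poles of equal order), so the foliation is equivariant and pushes down to $\ModAres(\mu)$. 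The main obstacle I anticipate is the explicit computation of $D\Res_\Gamma$ in terms of the hypercohomology class and verifying the claimed compatibility with $D\Hol$; the rest is bookkeeping with the exact sequences already set up in Theorems \ref{T:DerivativeOfHol}--\ref{T:coderivative}.
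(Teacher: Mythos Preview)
Your strategy is different from the paper's and the surjectivity step, as written, has a genuine gap.

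The paper does not use the coderivative formula here. Instead it builds a double complex whose bottom row is the exact sequence
\[
0\to\trans\to T_X(-C)\xrightarrow{\nabla^*}\mathcal O_X\xrightarrow{R_\Gamma}W^*_{P_\bZ}\to0
\]
of Proposition~\ref{P:TransExact}, and analyzes the resulting spectral sequence. On the $E^1$ page the differential $d_1\colon\bH^1(L^\bullet)\to H^1(X\setminus C)$ is $D\THol$, and a separate lemma identifies $d_2\colon E_{1,1}^2\to E_{3,0}^2$ with $D(\Res_\Gamma|_{\ker D\Hol})$ via an explicit computation with deformed flat one-forms. Since the rows are exact the spectral sequence converges to zero; together with $H^2(\trans)=0$ for non--translation surfaces (Lemma~\ref{L:H2Trans}) this forces $d_2$ to be an \emph{isomorphism}, yielding surjectivity of $D(\Hol\times\Res_\Gamma)$. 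The whole argument is algebraic and avoids any direct construction of deformations.

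Your proposal instead tries to exhibit deformations ``concentrated in small disks about the poles'' that lie in $\ker D\Hol$ and realize arbitrary residue changes. This is the step that does not go through as stated. In the hypercohomology model a class in $\bH^1(L^\bullet)$ is a pair $(\chi,\eta')\in C^1(T_X(-C))\oplus C^0(\Omega_X(C))$, and there is no nontrivial holomorphic cocycle literally supported in a single disk; even if you deform only the connection (keeping $(X,C)$ fixed), the allowed perturbations are global sections of $\Omega_X$, not local data. The effect on holonomy is governed by the global period class in $\bH^1(\Omega_X^\bullet(\log C))$, so you cannot simply declare your perturbation invisible to $\Hol$. What you actually need is that $D\Res_\Gamma$ restricted to $\ker D\THol$ is onto $T\proj^{P_\zed}$, and this is precisely the content the paper extracts from the spectral sequence degeneration. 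Your heuristic is pointing at the right statement, but the justification offered is not one.

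One smaller correction on the independence of $\Gamma$: changing the tree does not multiply the residue vector by a single scalar. Each coordinate $r_j$ is multiplied by the holonomy $\chi(\gamma_j^{-1}\gamma_j')$ of a \emph{different} loop, so the comparison map $\Res_{\Gamma'}\mapsto\Res_\Gamma$ is a diagonal linear map on $\cx^{P_\zed}$, not a scalar. Your observation that this map is constant along fibers of $\Hol$ is the correct reason the joint fibers agree, but the ``single scalar'' description is wrong.
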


Theorem~\ref{T:SubmersionWithResidue} is proved by analyzing a
spectral sequence, some of whose differentials represent the
derivatives $D\THol$ and $D\Res_\Gamma$.  Analyzing the same spectral
sequence yields a second proof of Theorem~\ref{T:Veechsubmersion}.

We define in \S\ref{sec:isoresidual} an indefinite Hermitian metric on
the leaves of this isoresidual foliation restricted to the locus
$\mathcal{F}'_{g,n}(\mu)\subset \ModAres(\mu)$ of flat surfaces, generalizing a metric on the
leaves of the isoholonomic foliation, constructed by Veech
\cite{veech93} in the holomorphic case.

\paragraph{Connections to previous work.}\label{SS:PreviousWork}


Branched affine structures are examples of \emph{projective structures (with
  regular singularities)} on surfaces. Subject to conditions on the
singularities, which we will not describe, such a structure is a
Riemann surface $X$ with a finite collection of points $C$ and an
atlas of charts to $\bP^1$ with transition functions in
$G = \mathrm{PGL}(2, \bC)$. These structures have a moduli space that
is naturally a complex variety and a monodromy map to a character
variety $\mathrm{Hom}(\pi_1(X \setminus C), G)/G$.

In work of Hubbard \cite{Hubbard-ProjectiveStructures}, Earle \cite{Earle-ProjStructures}, and Hejhal \cite{Hejhal}, the moduli space of affine and projective structures with no singularities is considered and the monodromy map is shown to be a local homeomorphism. Hubbard \cite[Proposition 2]{Hubbard-ProjectiveStructures} identifies the tangent space to the moduli space of unbranched projective structures with the cohomology of a rank three local system. To do this, he builds a universal family of unbranched projective structures \cite[Proposition 1]{Hubbard-ProjectiveStructures} and appeals to Kodaira-Spencer theory. He also computes the derivative of the monodromy map as an induced map between cohomology groups. This work is closely connected to our own approach to proving Theorems \ref{T:Families}, \ref{T:DerivativeOfHol}, and \ref{thm:holonomy_derivative_intro}.

Earle's approach is more analytic and rooted in classical Teichm\"uller theory. He also considers projective structures with no singularities, but observes that the Teichm\"uller space of such structures can be identified with an affine space over Teichm\"uller space whose fiber is modeled on the space of quadratic differentials. He identifies the tangent space to the character variety with an Eichler cohomology group and shows that its derivative coincides with a specific map from $Q(X) \times B(X)$ to this cohomology group where $Q(X)$ and $B(X)$ are spaces of quadratic differentials and Beltrami differentials respectively on a Riemann surface $X$. This is called the \emph{Earle variational formula}.

While Earle's approach is not obviously related to our work, Luo \cite{Luo-Monodromy} used the Earle variational formula to show that the monodromy map is a local diffeomorphism when restricted to projective structures with regular singularities, none of which have parabolic local monodromy. Iwasaki \cite{Iwasaki-Monodromy} then extended the analysis to the setting where a certain number of cone points are \emph{apparent singularities}, i.e. have trivial local monodromy. The work of Iwasaki and Luo is essentially strong enough to contain Theorem \ref{T:Veechsubmersion} as a corollary. A special case of Iwasaki's result appears in work of Billon \cite{Billon-ModuliOfBranchedProj}.

The problem of determining when the monodromy map is a local biholomorphism to the character variety, phrased in the language of $\mathrm{PSL}(2, \bC)$-opers instead of branched projective structures, was studied by Allegretti-Bridgeland \cite{AllegrettiBridgeland}, who permit projective structures with irregular singularities. Serandour \cite{serandour2023meromorphic} solved this problem with a result that subsumes and generalizes all previously discussed work in this subsection. His work directly implies Theorem \ref{T:Veechsubmersion}.

\paragraph{Acknowledgments.} This work was supported by a grant from
the Simons Foundation [\#713192, MB].
The first author was partially supported by
NSF grant no.\ DMS-2304840.  


\section{Affine Surfaces}
\label{sec:affine}

In this section, we recall in more detail the correspondence between
branched complex affine structures on a Riemann surface $X$ and
connections with regular singularities on the holomorphic cotangent
bundle $\Omega_X$ (see also \cite{NovikovShapiroTahar}).  The equivalence
between these notions is mediated by the classical Riemann-Hilbert
correspondence which identifies vector bundles over $X$ equipped with
a flat connection with representations $\pi_1(X)\to\mathrm{GL}_n\cx$,
of which we recall the special case that we need and refer the reader
to \cite{conrad_riemann_hilbert} and
\cite{deligne_equations_differentielles} for the general case.  As an
illustration of the connection point of view, we recover Veech's
classification of affine structures on a pointed Riemann surface.

\paragraph{Flat bundles.}

Given a Riemann surface $X$ together with a holomorphic line bundle
$\mathcal{L}\to X$, recall that a \emph{(holomorphic) connection} $\nabla$ on
$\mathcal{L}$ is a $\cx$-linear homomorphism of sheaves,
\begin{equation*}
  \nabla\colon \mathcal{L}\to \Omega_X\otimes_{\mathcal{O}_X} \mathcal{L},
\end{equation*}
which satisfies for any holomorphic function $f$ and local section
$\sigma$ the Leibniz rule,
\begin{equation*}
  \nabla(f\sigma) = df \otimes \sigma + f \nabla\sigma.
\end{equation*}
A section $s$ of $\mathcal{L}$ is \emph{flat} if $\nabla s=0$.  We
call a pair $(\mathcal{L}, \nabla)$ a \emph{flat line bundle} over
$X$.  One would ordinarily want to assume that the curvature of
$\nabla$ is $0$, but this is automatic if $X$ is one-dimensional.

It follows from the Leibniz rule that the difference of two
connections on $\mathcal{L}$ is $\mathcal{O}_X$-linear, and so a
holomorphic section of $\Omega_X\otimes \End(\mathcal{L})= \Omega_X$.
In other words, the set of connections on $\mathcal{L}$ is an affine
space over $\Omega(X) = H^0(\Omega_X)$.

A local trivialization $s$ of $\cL$ determines a
holomorphic one-form $\Gamma$, the \emph{connection form}, such that
\begin{equation*}
  \nabla s = \Gamma \otimes s.
\end{equation*}
If $s' = g s$ is a different trivialization, its connection form $\Gamma'$ is
related to $\Gamma$ by the transformation,
\begin{equation}
  \label{eq:1}
  \Gamma' = dg/g + \Gamma.
\end{equation}

A connection $\nabla$ on $\mathcal{L}$ induces a dual connection
$\nabla^*$ on $\mathcal{L}^*$, the dual of $\mathcal{L}$, which is
characterized by the Leibniz rule,
\begin{equation*}
  \langle  \nabla s, t^*\rangle + \langle s, \nabla^* t^*\rangle= d
  \langle s, t^*\rangle
\end{equation*}
where $s$ and $t^*$ are local sections of $\mathcal{L}$ and
$\mathcal{L}^*$, respectively.

The following is the classical Riemann-Hilbert correspondence for the
special case of one-dimensional line bundles over Riemann surfaces.
\begin{theorem}[Riemann-Hilbert correspondence]
  \label{thm:RiemannHilbert}
  Given a Riemann surface $X$, the following structures are
  equivalent:
  \begin{enumerate}
  \item a flat line bundle $(\mathcal{L}, \nabla)$ over $X$.
  \item a one-dimensional complex local system $V$ over $X$.
  \item a \emph{holonomy character}  $\chi\in H^1(X, \cx^*)$.
  \end{enumerate}
\end{theorem}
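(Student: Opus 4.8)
The plan is to prove the three equivalences pairwise, $(1)\Leftrightarrow(2)$ and $(2)\Leftrightarrow(3)$, in each case by exhibiting explicit, mutually quasi-inverse constructions, so that the statement in fact upgrades to an equivalence of groupoids; we may assume $X$ connected, treating connected components separately otherwise.

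For $(2)\Leftrightarrow(3)$: a one-dimensional complex local system $V$ on $X$ is the same datum as its monodromy representation $\rho_V\colon\pi_1(X,x_0)\to \mathrm{GL}(V_{x_0})\cong\cx^*$. Since $\cx^*$ is abelian, $\rho_V$ is independent of the basepoint and of the chosen identification $V_{x_0}\cong\cx$, up to the evident canonical identifications, and factors through the abelianization, giving a homomorphism $H_1(X;\Z)\to\cx^*$. Because $\cx^*$ is a divisible, hence injective, $\Z$-module, the universal coefficient theorem gives $H^1(X;\cx^*)\cong\Hom(H_1(X;\Z),\cx^*)$, which is exactly the set of such monodromy characters; recovering $V$ from a character $\chi$ is the standard associated-local-system construction, and the two operations are inverse to one another.

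For $(1)\Rightarrow(2)$: given a flat line bundle $(\mathcal{L},\nabla)$, set $V=\ker(\nabla)\subset\mathcal{L}$, the subsheaf of flat sections. The one genuinely analytic input, which I expect to be the crux even though it is elementary in complex dimension one, is that $\nabla$ admits local flat trivializations: choosing near a point a holomorphic trivialization $s$ with connection form $\Gamma$ as in \eqref{eq:1}, the section $e^{-\varphi}s$ is flat as soon as $\varphi$ is a local holomorphic primitive of $\Gamma$, and such a primitive exists on any simply connected neighborhood. Hence $V$ is a rank-one $\cx$-local system; no curvature hypothesis is needed, since $\Omega_X^2=0$ on a Riemann surface. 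For $(2)\Rightarrow(1)$: given a local system $V$, put $\mathcal{L}=V\otimes_{\cx}\mathcal{O}_X$, a holomorphic line bundle, with connection $\nabla$ determined by $\nabla(v\otimes f)=v\otimes df$ for $v$ a flat local section of $V$; this is well-defined, $\cx$-linear, satisfies the Leibniz rule, and has $V\otimes 1\cong V$ as its sheaf of flat sections.

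It then remains only to check that the composites $(1)\to(2)\to(1)$ and $(2)\to(1)\to(2)$ are naturally isomorphic to the identity, which is a direct verification using \eqref{eq:1} together with the local flat trivializations above. I do not anticipate a serious obstacle: the mathematical content reduces to the local integrability of a first-order linear ODE in one complex variable, equivalently the local existence of holomorphic primitives, plus bookkeeping to see the constructions are mutually inverse and natural; the only mild subtlety beyond that is the identification of $\Hom(\pi_1(X),\cx^*)$ with $H^1(X;\cx^*)$, which uses precisely that $\cx^*$ is abelian and divisible.
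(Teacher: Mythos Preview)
Your argument is correct and follows essentially the same route as the paper's sketch: both pass from $(\mathcal{L},\nabla)$ to the local system $\ker\nabla$ and identify rank-one local systems with characters via monodromy. The only cosmetic differences are that the paper invokes the Picard--Lindel\"of theorem where you explicitly integrate the connection form to produce a flat frame, and the paper builds the inverse as the associated bundle $\widetilde{X}\times_{\pi_1(X)}\cx$ from $\chi$ directly, whereas you go from the local system via $V\otimes_{\cx}\mathcal{O}_X$; these are the same construction in different clothing, and your added remark on $H^1(X;\cx^*)\cong\Hom(H_1(X;\Z),\cx^*)$ makes explicit an identification the paper uses without comment.
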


\begin{proof}[Sketch of Proof]
  The bundle of flat sections $K = \ker\nabla$ is a local system by the
  Picard–Lindel\"of Theorem.  The character $\chi$ assigns to a loop
  $\gamma$ the holonomy of a flat section over $\gamma$.  Given a
  character $\chi$, we may define a flat bundle with holonomy $\chi$
  as $\mathcal{L}_\chi = \widetilde{X}\times \cx / \pi_1(X)$, where
  $\pi_1(X)$ acts on the universal cover $\widetilde{X}$ by deck
  transformations, and on $\cx$ by the character $\chi$.  The trivial
  connection $\nabla=d$ is invariant under this action, and so it
  descends to a connection $\nabla_\chi$ on $\mathcal{L}_\chi$.
\end{proof}

Now suppose $(\mathcal{L}, \nabla)$ is a flat line bundle over
$X\setminus C$, a Riemann surface with a discrete set of punctures.  A
section $\sigma$ of $\mathcal{L}$ is \emph{meromorphic} if it is
meromorphic over $X\setminus C$ in the usual sense and moreover every
puncture $c$ has a neighborhood in which $\sigma = z^m f(z) s$, where
$s$ is a multivalued flat section, $z$ is a local coordinate sending
$c$ to $0$, and $f$ is meromorphic.  The \emph{order} of $\sigma$ at
$c$ is the complex number
\begin{equation}
  \label{eq:meromorphic_order}
  \ord_c \sigma = m+ \ord_0 f.
\end{equation}

\paragraph{Regular singularities.}

Now suppose $C\subset X$ is a discrete set of punctures and $\mathcal{L}$ is a line
bundle over $X'=X\setminus C$.  A connection $\nabla$ on
$\mathcal{L}|_{X\setminus C}$ is said to have \emph{regular
  singularities along $C$} if its connection forms are meromorphic
with at worst simple poles along $C$.  In other words,
$\nabla$ is a $\cx$-linear homomorphism
\begin{equation*}
  \nabla\colon \mathcal L \to \Omega_X(C)\otimes_{\mathcal{O}_X} \mathcal{L}.
\end{equation*}
The \emph{residue} $\Res_c \nabla$ of $\nabla$ at $c$ is defined to be
the residue of the connection form $\Gamma$ for any local section,
which is well defined by \eqref{eq:1}.

It is well-known that the regularity of a connection is equivalent to
bounded growth of multivalued sections at the singularities.  More
precisely, a multivalued function $f$ on the punctured disk has
\emph{moderate growth} if for any sector $S$ and single-valued branch
of $f$ on $S$, we have $|f(z)| \leq C |z|^{-N}$ on $S$ for some
constants $C$ and $N>0$.  A multivalued section $s$ of
$\mathcal{L}\to X\setminus C$ has \emph{moderate growth} if in a
neighborhood of each puncture, we have $s=f s_0$, where $s_0$ is a
single-valued holomorphic
section and $f$ is a multivalued function of moderate growth.

\begin{theorem}[{\cite[Theorem~1.19]{deligne_equations_differentielles}}]
  \label{thm:regulariffmoderate}
  A connection $\nabla$ on $\mathcal{L}|_{X\setminus C}$ has regular
  singularities along $C$ if and only if every multivalued flat section
  has moderate growth at every puncture.
\end{theorem}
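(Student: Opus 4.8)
The plan is to reduce the global statement to a purely local one on the punctured disk $\Delta^* = \Delta \setminus \{0\}$, since both ``regular singularities'' and ``moderate growth'' are conditions imposed separately at each puncture $c \in C$. So fix a coordinate disk around a puncture, trivialize $\mathcal{L}$ by a single-valued holomorphic section $s_0$, and write $\nabla s_0 = \Gamma \otimes s_0$ for a holomorphic one-form $\Gamma$ on $\Delta^*$. Writing $\Gamma = a(z)\,\frac{dz}{z}$ where $a$ is holomorphic on $\Delta^*$, the connection has a regular singularity at $0$ precisely when $a$ extends holomorphically across $0$, i.e.\ $\Gamma$ has at worst a simple pole. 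A multivalued flat section has the form $s = f\, s_0$ where $f$ satisfies $df = -f\,\Gamma$, i.e.\ $f' = -\frac{a(z)}{z} f$; so the content of the theorem is the classical fact that the scalar ODE $z f' + a(z) f = 0$ has all solutions of moderate growth at $z=0$ if and only if $a$ is holomorphic at $0$ (this is the one-dimensional, one-puncture case of \cite[Theorem~1.19]{deligne_equations_differentielles}, which we are entitled to invoke, but it is instructive to see it directly).

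For the ``only if'' direction (regular $\Rightarrow$ moderate growth): if $a$ is holomorphic at $0$ with $a(0) = \lambda$, then one integrates directly. On a sector one has $f(z) = z^{-\lambda}\exp\!\big(-\!\int (a(z)-\lambda)\,\tfrac{dz}{z}\big)$, and since $(a(z)-\lambda)/z$ is holomorphic near $0$ its integral is holomorphic and hence bounded on a neighborhood, so $|f(z)| \le C\,|z|^{-\operatorname{Re}\lambda - \e}$ on the sector for any $\e > 0$ after shrinking. That is moderate growth. For the ``if'' direction (moderate growth $\Rightarrow$ regular) I would argue by contraposition: suppose $a$ has a pole of order $k \ge 2$ at $0$, so $\Gamma = a(z)\,\tfrac{dz}{z}$ has a pole of order $k+1 \ge 3$; write $\int \Gamma = -\frac{c}{z^{k-1}}(1 + o(1))$ near $0$ with $c \ne 0$, so that a branch of the flat section is $f = \exp\!\big(\tfrac{c}{z^{k-1}}(1+o(1))\big)$. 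Choosing a ray along which $\operatorname{Re}(c/z^{k-1}) \to +\infty$ — which exists since $k - 1 \ge 1$ means the argument of $c/z^{k-1}$ sweeps out all values — we get $|f(z)| \to \infty$ faster than any power of $|z|^{-1}$, contradicting moderate growth. (The remaining case, $\Gamma$ holomorphic but with $a$ having a removable singularity versus genuine — there is nothing to check: $\Gamma$ holomorphic is a special case of a simple pole.)

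The main obstacle is the ``if'' direction: one must produce a single sector and single-valued branch on which $f$ violates the polynomial bound, and the subtlety is purely the bookkeeping of which argument sectors of $z$ make $\operatorname{Re}\big(c\, z^{-(k-1)}\big)$ blow up — but as noted this is automatic because $z \mapsto z^{-(k-1)}$ is onto a full punctured neighborhood. A secondary, more conceptual point is that I have phrased everything after trivializing $\mathcal{L}$ by $s_0$; one should note that the notions involved (the pole order of $\Gamma$, moderate growth of $s$) are invariant under the change-of-trivialization rule \eqref{eq:1}, since $dg/g$ is holomorphic and nonvanishing, hence contributes neither a pole to $\Gamma$ nor anything worse than a bounded-above-and-below factor to $|f|$; so the local reduction is legitimate. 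Finally, gluing the local statements: a flat section of $\mathcal{L}$ over all of $X \setminus C$ restricts near each puncture to a local flat section, and conversely regularity is checked puncture-by-puncture, so the global equivalence follows immediately from the local one at each $c \in C$. Alternatively, one simply cites \cite[Theorem~1.19]{deligne_equations_differentielles} verbatim, which is what the statement attribution indicates; I would include the short direct argument above for the reader's convenience rather than as a replacement.
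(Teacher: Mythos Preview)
The paper does not give its own proof of this theorem; it is stated with a citation to Deligne and used as a black box, so there is nothing to compare against at the level of argument. Your sketch has the right shape---the reduction to a local statement on the punctured disk and the explicit integration of the connection form are exactly the right moves---but two points deserve correction.

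First, a bookkeeping slip in the ``if'' direction: the irregular case begins already when $a$ has a \emph{simple} pole (so that $\Gamma$ has a double pole), not at $k \ge 2$; and the leading term of $\int \Gamma$ is of order $z^{-k}$, not $z^{-(k-1)}$. With $k \ge 1$ and the corrected exponent, your ray argument goes through unchanged.

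Second, and more substantively, your contrapositive only treats the case where $a$ (equivalently $\Gamma$) is \emph{meromorphic} at $0$. The connection is only assumed holomorphic on $\Delta^*$, so a priori $\Gamma$ could have an essential singularity at $0$, and your Laurent-expansion argument does not apply there. The cleanest fix is to argue the ``if'' direction directly rather than by contraposition: if a flat section $s = f\, s_0$ has moderate growth, choose $m$ with $e^{-2\pi i m}$ equal to the local monodromy, so that $g := z^m f$ is single-valued, holomorphic, and nowhere zero on $\Delta^*$; moderate growth of $f$ gives moderate growth of $g$, so $z^N g$ is bounded for some $N$ and hence $g$ is meromorphic across $0$ by Riemann's removable singularity theorem. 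Writing $g = z^p h$ with $h(0)\ne 0$, one reads off
\[
  \Gamma \;=\; -\,d\log f \;=\; (m-p)\,\frac{dz}{z} \;-\; \frac{dh}{h},
\]
which visibly has at worst a simple pole. This handles all cases uniformly and is closer to the argument one finds in Deligne. (A minor aside: in your invariance remark, the change-of-trivialization term $dg/g$ is holomorphic, but need not be nonvanishing---though that is irrelevant to the point you were making.)
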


The residue of a connection at a puncture determines the holonomy of
flat sections around that puncture:

\begin{prop}
  If $c$ is a regular singularity of a flat bundle $(\mathcal{L},
  \nabla)$ of residue $r$, then in a trivialization over a  neighborhood of $c$, any flat section is of the form $z^{-r} F(z)$, where $z$ is a local
  coordinate identifying $c$ with $0$, and $F$ is holomorphic
  near $0$.  In particular, $s$ has holonomy $e^{-2\pi i r}$ around $c$.
\end{prop}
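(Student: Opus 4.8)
The plan is to reduce immediately to a local model on a punctured coordinate disk and solve the flatness equation by hand. Fix a coordinate $z$ identifying a neighborhood of $c$ with a disk $\Delta$ centered at $0$, so that $c$ corresponds to $0$ and is the only singularity in $\Delta$. Since $\Delta$ is contractible, $\mathcal{L}$ is holomorphically trivial over $\Delta$; choose a trivializing section $s_0$ and let $\Gamma$ be its connection form, so $\nabla s_0 = \Gamma\otimes s_0$. Because $\nabla$ has a regular singularity at $c$ with residue $r$, we may write $\Gamma = \left(\frac{r}{z}+h(z)\right)dz$ for some function $h$ holomorphic on $\Delta$ (shrinking $\Delta$ if necessary).

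Next I would write a general section of $\mathcal{L}$ over $\Delta\setminus\{0\}$ as $s = f\,s_0$ and translate flatness into an ODE. By the Leibniz rule $\nabla s = (df + f\Gamma)\otimes s_0$, so $s$ is flat if and only if $df = -f\Gamma$, i.e. $df/f = -\left(\frac{r}{z}+h(z)\right)dz$. Let $H$ be a holomorphic primitive of $h$ on the simply connected disk $\Delta$. Then the space of solutions is one-dimensional over $\cx$, spanned by the multivalued function $f(z) = z^{-r}e^{-H(z)}$, where $z^{-r}$ denotes $e^{-r\log z}$. Setting $F(z) = e^{-H(z)}$, which is holomorphic and nowhere zero on $\Delta$, gives $s = z^{-r}F(z)\,s_0$ (up to scale), as claimed; in particular flat sections exist, recovering the statement of the Riemann–Hilbert correspondence in this local setting. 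The form of the answer is independent of the trivialization: replacing $s_0$ by $u\,s_0$ with $u$ a nowhere-vanishing holomorphic function rewrites $s$ as $z^{-r}(F/u)\,s_0'$, and $F/u$ is again holomorphic.

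For the holonomy statement, fix a nonzero flat section $s = z^{-r}F(z)\,s_0$ and analytically continue it around the positively oriented loop $t\mapsto \varepsilon e^{2\pi i t}$, $t\in[0,1]$. Under this continuation $\log z$ is replaced by $\log z + 2\pi i$, so $z^{-r}$ is multiplied by $e^{-2\pi i r}$, while $F$ and $s_0$ are single-valued and return to their original values. Hence the holonomy of $\nabla$ around $c$ is $e^{-2\pi i r}$.

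This argument is entirely elementary, so there is no real obstacle; the only points requiring attention are the bookkeeping of signs — the flatness equation $df = -f\Gamma$ is what produces the exponent $-r$ and hence the holonomy $e^{-2\pi i r}$ — the existence of the single-valued primitive $H$ of $h$, which uses that $\Delta$ is simply connected, and the convention that $z^{-r}=e^{-r\log z}$ is genuinely multivalued when $r\notin\zed$. Note that nothing here requires the characterization of regular singularities via moderate growth (Theorem~\ref{thm:regulariffmoderate}), only the bare definition of a regular singularity.
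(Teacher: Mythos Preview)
Your proof is correct and follows essentially the same approach as the paper: trivialize locally, write the connection form as $(r/z+h)\,dz$, solve $df=-f\Gamma$ to get $f=z^{-r}e^{-H}$, and read off the holonomy. Your version is more detailed (you spell out the holonomy computation and the invariance under change of trivialization), but the argument is the same.
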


\begin{proof}
  Let $s$ be a trivialization of $\mathcal{L}$ around $c$ with
  connection form $\Gamma = (\frac{r}{z} + f(z))dz$.  A flat section
  is of the form $g s$, where $g$ is a multivalued
  holomorphic function satisfying the differential equation
  $dg + g\Gamma=0$, so $g(z) = z^{-r}e^{\int f(z)\,dz}$.  
\end{proof}

\begin{theorem}
  \label{thm:bundle_extension}
  Given a Riemann surface $X$ and discrete subset $C$, any flat bundle
  $(\mathcal{L}', \nabla')$ over $X'=X \setminus C$ may be extended to
  a flat bundle $(\mathcal{L}, \nabla)$ over $X$ with regular
  singularities along $C$ whose residue at each puncture $c$ may be taken
  to be any $m$ satisfying
  \begin{equation}
    \label{eq:2}
    e^{2 \pi i m} = a,
  \end{equation}
  where $a$ is the holonomy of $(\mathcal{L}', \nabla')$ around
  $c$.  Moreover, any two such extensions with the same residues are
  isomorphic as flat singular bundles.
\end{theorem}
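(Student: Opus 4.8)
The plan is to work one puncture at a time, since the statement is local around each $c \in C$: once we know how to extend across a single puncture (and that the extension is unique up to isomorphism with prescribed residue), we glue the local extensions to the given bundle on $X'$ over the overlaps, which are disjoint punctured disks. So fix a coordinate disk $\Delta$ around $c$ with coordinate $z$, $c \leftrightarrow 0$, and let $\Delta^* = \Delta \setminus \{0\}$. The holonomy of $(\mathcal{L}', \nabla')$ around the loop $\partial \Delta$ is some $a \in \cx^*$; fix any $m \in \cx$ with $e^{2\pi i m} = a$. On $\Delta^*$, by the Riemann--Hilbert correspondence (Theorem~\ref{thm:RiemannHilbert}), $(\mathcal{L}'|_{\Delta^*}, \nabla'|_{\Delta^*})$ is isomorphic to the model bundle built from the character sending the generator of $\pi_1(\Delta^*)$ to $a$; concretely, choose a nonvanishing flat multivalued section $s$ over $\Delta^*$, which by the previous two results has the form $z^{-m}\cdot(\text{single-valued nonvanishing holomorphic})$ after adjusting, and declare $\sigma_0 := z^m s$ to be a single-valued nonvanishing holomorphic frame. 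Define $\mathcal{L}$ over $\Delta$ to be the free $\mathcal{O}_\Delta$-module on $\sigma_0$, with connection determined by $\nabla \sigma_0 = -\frac{m}{z}\, dz \otimes \sigma_0$. This has a simple pole at $0$ with residue $m$ (independent of trivialization by \eqref{eq:1}), and on $\Delta^*$ it agrees with $(\mathcal{L}', \nabla')$ by construction. Gluing this local model to $(\mathcal{L}', \nabla')$ via the identification on $\Delta^*$ yields the desired flat bundle $(\mathcal{L}, \nabla)$ on $X$ with regular singularities along $C$ and prescribed residues.

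For uniqueness, suppose $(\mathcal{L}_1, \nabla_1)$ and $(\mathcal{L}_2, \nabla_2)$ are two extensions with the same residue $m_c$ at each $c$. Away from $C$ they are canonically identified (both restrict to $(\mathcal{L}', \nabla')$), so it suffices to show that this identification extends over each puncture. Near $c$, pick a single-valued holomorphic frame $\sigma_i$ of $\mathcal{L}_i$; by \eqref{eq:1} we can adjust $\sigma_i$ by a unit so that the connection form is exactly $\Gamma_i = \frac{m_c}{z}\,dz$ — indeed, if $\Gamma_i = (\frac{m_c}{z} + f_i(z))\,dz$ with $f_i$ holomorphic, replacing $\sigma_i$ by $g_i \sigma_i$ with $g_i = \exp(-\int_0^z f_i)$ kills the holomorphic tail. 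Then over $\Delta^*$ the change of frame between the two normalized extensions sends $\sigma_1 \mapsto h\,\sigma_2$ for some $h$, and flatness forces $dh/h = \Gamma_1 - \Gamma_2 = 0$, so $h$ is a constant in $\cx^*$; hence the isomorphism $\sigma_1 \mapsto h\,\sigma_2$ extends holomorphically (and flatly) across $0$. Globalizing, the isomorphism on $X'$ extends to an isomorphism $(\mathcal{L}_1, \nabla_1) \cong (\mathcal{L}_2, \nabla_2)$ of flat bundles with regular singularities.

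The main technical point — really the only place anything happens — is the normalization step: knowing that near a regular singularity one can choose a holomorphic frame in which the connection form is \emph{exactly} $\frac{m}{z}\,dz$, with no holomorphic correction term. This is where the hypothesis of regular (as opposed to irregular) singularities is essential, and it is exactly the content of the displayed computation $g(z) = z^{-r} e^{\int f(z)\,dz}$ in the proof of the Proposition above: the factor $e^{\int f}$ is a single-valued holomorphic unit, so absorbing it into the frame is legitimate. Once this normal form is available, both existence (glue the model $\frac{m}{z}\,dz$ bundle) and uniqueness (constant transition functions) are formal. One should also remark that any $m$ with $e^{2\pi i m} = a$ is achievable: different choices of $m$ differ by an integer, corresponding to twisting the local model by a power of $z$, which changes the extension but not its restriction to $X'$ — this is why the residue is only pinned down modulo $\mathbb{Z}$ by the holonomy, and why the uniqueness clause must fix the residues and not merely the holonomy.
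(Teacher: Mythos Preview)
Your existence argument is essentially identical to the paper's: both take a multivalued flat section $s$, set $t=z^m s$ as a single-valued holomorphic frame, and read off the residue from $\nabla t = \tfrac{m}{z}\,dz\otimes t$ (your sign $-\tfrac{m}{z}$ is a slip, but inconsequential). Your uniqueness argument, however, takes a different route. You work locally: normalize a frame for each extension near the puncture so that the connection form is exactly $\tfrac{m_c}{z}\,dz$, then observe that the transition function $h$ between the two normalized frames satisfies $dh/h=0$, hence is a nonzero constant, so the canonical identification on $X'$ extends holomorphically across $c$. The paper instead argues globally and without any normal form: the tensor product $(\mathcal{L}_1\otimes\mathcal{L}_2^*,\nabla_1\otimes\nabla_2^*)$ has residue zero at every puncture (so it is a nonsingular flat line bundle on all of $X$) and trivial holonomy (so it is trivial by Riemann--Hilbert), whence $\mathcal{L}_1\cong\mathcal{L}_2$ as flat bundles. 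Your approach is more hands-on and reuses the normal-form computation from the preceding Proposition; the paper's approach is slicker, avoids local calculations entirely, and makes the role of the equal-residues hypothesis transparent---it is exactly what makes the ``difference bundle'' extend across $C$.
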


\begin{remark}
  This is a special case of the general principle that extending a
  flat vector bundle across a singularity amounts to a choice of
  logarithm of the monodromy (see
  \cite[Proposition~5.4]{deligne_equations_differentielles}).
\end{remark}

\begin{proof}
  Choose local coordinates identifying a neighborhood of $c$ with
  the punctured unit disk $\Delta^*$.  Let $s$ be a multivalued flat
  section of $\mathcal{L}'|_{\Delta^*}$.  For any $m$
  satisfying \eqref{eq:2}, $t = z^{m}s$ defines a trivialization of
  $\mathcal{L}'|_{\Delta^*}$, which defines an extension of
  $\mathcal{L}'$ across $c$ in which $t$ is a nonzero holomorphic
  section.  We compute,
  \begin{equation*}
    \nabla' t = m z^{m-1} dz\otimes s = m \frac{dz}{z}\otimes t,
  \end{equation*}
  so $\nabla'$ extends to a connection with a regular singularity at
  $c$ of residue $m$.

  Given two extensions $(\mathcal{L}_i. \nabla_i)$, the tensor product
  $(\mathcal{L}_1\otimes\mathcal{L}_2^*, \nabla_1\otimes \nabla_2^*)$
  has zero residue at each puncture, so it is a nonsingular flat
  bundle over $X$ with trivial holonomy.  It follows that
  $\mathcal{L}_1\otimes\mathcal{L}_2^*$  is trivial, so
  $\mathcal{L}_1\isom\mathcal{L}_2$ as flat bundles.
\end{proof}

Given a holonomy character $\chi\in H^1(X\setminus C, \cx^*)$ together
with an assignment $\bfm\colon C \to \cx$ of residues satisfying
\eqref{eq:2} at each puncture we call the pair $(\chi, \bfm)$ the
\emph{holonomy data} for $X\setminus C$.  By
Theorems~\ref{thm:RiemannHilbert} and~\ref{thm:bundle_extension},
holonomy data $(\chi, \bfm)$ determines a unique flat bundle
$(\mathcal{L}_{\chi, \bfm}, \nabla_{\chi, \bfm})$.  If $X$ is compact,
the product of the monodromy around the punctures is trivial, so the
sum of residues $d = \sum_{c\in C} \bfm(c)$ is integral.  By \cite{ABW}, $d$ is
the degree of $\mathcal{L}_{\chi, \bfm}$.

A meromorphic section of a flat bundle with regular singularities has
two competing notions of order at a singularity, which are related by
the following proposition.

\begin{prop}
  \label{prop:residueisorder}
  Given a meromorphic section $\sigma$ of a flat bundle
  $(\mathcal{L}, \nabla)$ over a neighborhood of a regular
  singularity $c$, the order $\ord_c\sigma$ as defined in
  (\ref{eq:meromorphic_order}) and the order
  $\ord_c^\mathcal{L}\sigma$ of $\sigma$ as a section of
  $\mathcal{L}$ are related by
  \begin{equation*}
    \ord_c \sigma = \Res_c\nabla + \ord_c^\mathcal{L}\sigma,
  \end{equation*}
\end{prop}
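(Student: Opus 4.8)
The plan is to reduce to a purely local computation in a neighborhood of $c$, using a flat trivialization and the description of flat sections from the previous proposition. First I would choose a local coordinate $z$ identifying $c$ with $0$, let $r = \Res_c\nabla$, and pick a holomorphic trivialization $s_0$ of $\mathcal{L}$ near $c$. By the proposition above (and its proof), the multivalued flat section of $\mathcal{L}$ near $c$ has the form $s = z^{-r} F(z)\, s_0$ with $F$ holomorphic and nonvanishing at $0$; so in the sense of \eqref{eq:meromorphic_order}, a flat section has $\ord_c s = -r$, while as a section of $\mathcal{L}$ it is $\ord^{\mathcal{L}}_c s = -r + \ord_0 F = -r$ minus whatever — wait, better to phrase it cleanly: $s_0$ has $\ord^{\mathcal{L}}_c s_0 = 0$ and, written against the flat frame, $s_0 = z^{r} F(z)^{-1} s$, so $\ord_c s_0 = r$ (as a multivalued-flat-order). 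This already exhibits the discrepancy $\ord_c s_0 - \ord^{\mathcal{L}}_c s_0 = r = \Res_c\nabla$ on the distinguished section $s_0$.

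Next I would observe that both notions of order are additive in the following sense: for the holomorphic trivialization $s_0$, any meromorphic section $\sigma$ can be written $\sigma = h \, s_0$ with $h$ a meromorphic \emph{single-valued} function on the punctured disk. On one hand, $\ord^{\mathcal{L}}_c \sigma = \ord_0 h$ by definition of the order of a section of a line bundle. On the other hand, writing $\sigma = h z^{r} F(z)^{-1} s = (h z^{r} F(z)^{-1}) s$ exhibits $\sigma$ in the form $z^{m} f(z) s$ of the definition \eqref{eq:meromorphic_order} with the multivalued flat frame $s$, where the exponent is $m = r$ (the non-integral part) and $f = h F^{-1}$ is meromorphic with $\ord_0 f = \ord_0 h$. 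Hence
\begin{equation*}
  \ord_c \sigma = r + \ord_0 h = \Res_c\nabla + \ord^{\mathcal{L}}_c \sigma,
\end{equation*}
which is the claimed identity. (One should only be mildly careful that $r$ may itself be any complex number, so $z^{r}$ genuinely is the multivalued factor and $m = r$, $\ord_0 f = \ord_0 h \in \mathbb{Z}$; the decomposition $m + \ord_0 f$ in \eqref{eq:meromorphic_order} is unambiguous because $\ord_0 f$ is forced to be an integer once $m$ is chosen to match the branching, and the branching of $\sigma$ around $c$ is exactly that of $s$, i.e.\ exponent $r$.)

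The only genuine subtlety — what I'd call the main obstacle, though it is minor — is checking that the exponent $m$ appearing in \eqref{eq:meromorphic_order} for $\sigma$ really is $r = \Res_c\nabla$ and not some other representative differing by an integer: the definition \eqref{eq:meromorphic_order} a priori allows any splitting $\sigma = z^m f(z) s$ with $f$ meromorphic, and shifting $m \mapsto m+k$, $f \mapsto z^{-k} f$ leaves $\sigma$ unchanged but leaves $m + \ord_0 f$ unchanged as well, so in fact the sum is well-defined regardless. I would make this explicit: $\ord_c\sigma$ is independent of the choice of splitting because $m + \ord_0 f$ is, and then any one convenient splitting — here the one coming from the flat frame $s$ with its natural branching exponent $r$ — computes it. With that remark in place the proof is the two-line display above.
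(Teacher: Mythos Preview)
Your argument is correct. The paper does not actually give a proof of this proposition; it is stated and immediately followed by the next paragraph, so the authors are treating it as an immediate consequence of the local form of flat sections established just above. Your local computation---writing $\sigma = h\,s_0$ against a holomorphic trivialization $s_0$, then rewriting $s_0 = z^{r}F^{-1}s$ against a flat frame $s$ to get $\ord_c\sigma = r + \ord_0 h = \Res_c\nabla + \ord_c^{\mathcal{L}}\sigma$---is exactly the verification the paper is leaving to the reader, and your remark that $m + \ord_0 f$ is invariant under $m\mapsto m+k$, $f\mapsto z^{-k}f$ cleanly disposes of the only ambiguity in the definition \eqref{eq:meromorphic_order}.
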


\paragraph{Affine surfaces.}

Recall that a branched affine structure on a Riemann surface $X$ with cone
points along $C$ is an atlas of charts covering $X\setminus C$ with
transition functions in $\Aff(C)$, subject to the following moderate
growth condition near each cone point.  A \emph{developing map} is a
multivalued holomorphic function on $X\setminus C$ whose
precomposition with any affine chart is affine.  We require that any
developing map has moderate growth near each cone point.

The cotangent bundle of the complex plane has its standard
$\Aff(\cx)$-in\-var\-i\-ant connection $\nabla(f\, dz) = f' \, dz^2$.  Given
an affine surface $X$, pulling back this standard connection by 
affine charts covering $X\setminus C$ defines a holomorphic connection $\nabla$ on the
cotangent bundle of $X\setminus C$.  

A closed curve $\gamma\colon [0,1] \to X\setminus C$ on an affine
surface $X$ has a complex turning number $\tau(\gamma)$,
generalizing the notion of turning number on a flat surface.   We define
the \emph{turning number of $\gamma$ with respect to $\nabla$} to be
\begin{equation}
  \label{eq:turning_number}
  \tau(\gamma) = \frac{1}{2\pi i} \int_0^1
  \frac{D(\gamma'(t))}{\gamma'(t)} \, dt,
\end{equation}
where $D$ is the covariant derivative associated to $\nabla^*$,
assigning to a vector field $v$ along $\gamma$ the contraction of
$\nabla^*v$ with $\gamma'$.
In other terms, suppose
$v(t) = g(t) \gamma'(t)$ is a parallel vector
field along $\gamma$, meaning $Dv=0$.  Then $g'/g =
-D\gamma'/\gamma'$, so
$\tau(\gamma) = \frac{1}{2\pi i}(\log g(0) - \log g(1))$.

We define the \emph{order} of a cone point $c$ to be
$\tau(\gamma)-1$ and the \emph{cone angle} to be
$2\pi \tau(\gamma)$, where $\gamma$ is a positively oriented loop around $c$.
The cone angle of $c$ is real when there is a compatible flat metric
in a neighborhood of $c$, in which case it agrees with the usual
notion of cone angle for metric cone points.  When the cone
angle is not real, its imaginary part represents the dilation of a
compatible flat metric around $c$.

The following theorem shows the equivalence of several competing
definitions of affine surfaces which have appeared in the literature.
Definition (a) is close to definitions appearing in \cite{veech93} and
\cite{gunning78}.  Definition (b) appears in \cite{Wang}.
We introduced definition (c) in \cite{ABW} and
established its equivalence with definition (b).  Definitions close to
(d) appear in \cite{gunning_special} and \cite{kra_affine}. The equivalence of definitions (a) and (d) was shown in \cite{NovikovShapiroTahar}. 

\begin{theorem}\label{T:DefinitionOfAffine}
  Given a Riemann surface $X$ with discrete subset $C$ and a function
  $\bfm\colon C \to \cx$, the following structures are equivalent:
  \begin{enumerate}[label=(\alph*)]
  \item an affine structure on $X$ having order $-\bfm(c)$ at each
    $c\in C$.
  \item a character $\chi\in H^1(X\setminus C, \cx^*)$ together with a
    meromorphic section $\omega$ of $\mathcal{L}_\chi\otimes \Omega_X$
    (defined up to scalar multiples) which is holomorphic and nonzero
    over $X\setminus C$ and has order $\bfm(c)$ at each $c\in C$
  \item holonomy data $(\chi, \bfm)$ such that the line bundle
    $\mathcal{L}_{\chi, \bfm}\otimes \Omega_X$ is holomorphically
    trivial.
  \item a connection $\nabla$ on $\Omega_X$ with regular singularities
    along $C$ having residues $-\bfm(c)$ at each $c\in C$.
  
  \end{enumerate}
\end{theorem}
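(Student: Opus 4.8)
The plan is to prove the cycle of implications $\mathrm{(a)}\Rightarrow\mathrm{(d)}\Rightarrow\mathrm{(c)}\Rightarrow\mathrm{(b)}\Rightarrow\mathrm{(a)}$; the composite $\mathrm{(b)}\Leftrightarrow\mathrm{(c)}$ recovers the result of \cite{ABW} and $\mathrm{(a)}\Leftrightarrow\mathrm{(d)}$ that of \cite{NovikovShapiroTahar}, and I recall the arguments for completeness. For $\mathrm{(a)}\Rightarrow\mathrm{(d)}$, pull back the standard $\Aff(\cx)$-invariant connection $\nabla(f\,dz)=f'\,dz^2$ on $\Omega_{\cx}$ along the affine charts covering $X\setminus C$. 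Since these charts differ by elements of $\Aff(\cx)$, which preserve the standard connection, the pullbacks agree on overlaps and glue to a holomorphic connection $\nabla$ on $\Omega_{X\setminus C}$, whose multivalued flat local sections are precisely the differentials of developing maps. By hypothesis every developing map, hence every multivalued flat section of $\nabla$, has moderate growth near each cone point, so Theorem~\ref{thm:regulariffmoderate} shows $\nabla$ has regular singularities along $C$ and thus extends to $\Omega_X$. A local computation gives $\nabla(dz)=-(\phi''/\phi')\,dz^2$ for $\phi$ a developing map, and combining this with the turning number formula \eqref{eq:turning_number} shows that the residues of $\nabla$ are those prescribed in (d).

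For $\mathrm{(d)}\Rightarrow\mathrm{(c)}$, pass to the dual connection $\nabla^{\ast}$ on $T_X$, which again has regular singularities along $C$. Its holonomy character $\chi\in H^1(X\setminus C,\cx^{\ast})$ together with its assignment of residues $\bfm$ constitutes holonomy data, and by Theorem~\ref{thm:RiemannHilbert} the restriction $(T_X,\nabla^{\ast})|_{X\setminus C}$ is the unique flat bundle $\mathcal{L}_\chi$ with holonomy $\chi$. Thus $(T_X,\nabla^{\ast})$ and the standard model $(\mathcal{L}_{\chi,\bfm},\nabla_{\chi,\bfm})$ are extensions of $\mathcal{L}_\chi$ across $C$ with regular singularities and the same residues, hence are isomorphic by Theorem~\ref{thm:bundle_extension}; in particular $T_X\cong\mathcal{L}_{\chi,\bfm}$ as holomorphic line bundles, which is the same as saying $\mathcal{L}_{\chi,\bfm}\otimes\Omega_X$ is holomorphically trivial. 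For $\mathrm{(c)}\Rightarrow\mathrm{(b)}$, a holomorphic trivialization of $\mathcal{L}_{\chi,\bfm}\otimes\Omega_X$ is the same thing as a nowhere-vanishing holomorphic section $\omega$, which is well defined up to the rescaling allowed in (b). Restricting $\omega$ to $X\setminus C$ yields a meromorphic section of $\mathcal{L}_\chi\otimes\Omega_X$ that is holomorphic and nonvanishing there, and by Proposition~\ref{prop:residueisorder} its order at each $c$ in the sense of \eqref{eq:meromorphic_order} equals $\Res_c\nabla_{\chi,\bfm}$, which is the value $\bfm(c)$ required in (b).

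For $\mathrm{(b)}\Rightarrow\mathrm{(a)}$, work over a simply connected open $U\subset X\setminus C$: pick a single-valued flat trivialization $s$ of $\mathcal{L}_\chi|_U$, write $\omega|_U=s\otimes\eta_U$ with $\eta_U$ a nowhere-vanishing holomorphic one-form, and set $f_U=\int\eta_U$. On overlaps $s$ changes by a locally constant factor governed by $\chi$, so $\eta_U$ is rescaled by the reciprocal constant and $f_U$ is altered by an affine transformation; hence the $f_U$ form an atlas of affine charts on $X\setminus C$. Near a cone point $c$, the order hypothesis on $\omega$ forces a branch of $\eta_U$ to have the form $z^{\beta}g(z)\,dz$ with $g$ holomorphic, whose primitive has moderate growth (including the logarithmic case $\beta=-1$), so the developing maps have moderate growth and the atlas defines an affine structure; tracking the exponent $\beta$ through \eqref{eq:turning_number} shows the order at $c$ is as prescribed in (a).

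The one point that genuinely demands care is the reconciliation of the three notions of ``order'' in play --- the order of a cone point, defined via the turning number \eqref{eq:turning_number}; the residue of a connection with regular singularities; and the order \eqref{eq:meromorphic_order} of a meromorphic section of a flat bundle --- which is precisely what Proposition~\ref{prop:residueisorder} and the local computations above accomplish. The sole substantial analytic ingredient, the equivalence of regularity of a connection with moderate growth of its flat sections, is Theorem~\ref{thm:regulariffmoderate}, which we invoke as a black box.
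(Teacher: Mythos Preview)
Your proof is correct and uses the same ingredients as the paper's---Theorem~\ref{thm:regulariffmoderate}, Theorem~\ref{thm:bundle_extension}, and Proposition~\ref{prop:residueisorder}---organized as a single cycle rather than the paper's separate bi-implications $\mathrm{(b)}\Leftrightarrow\mathrm{(c)}$, $\mathrm{(c)}\Leftrightarrow\mathrm{(d)}$, $\mathrm{(a)}\Leftrightarrow\mathrm{(d)}$. Two minor differences are worth noting: you close the loop via $\mathrm{(b)}\Rightarrow\mathrm{(a)}$, constructing charts by choosing flat trivializations of $\mathcal{L}_\chi$ and integrating the resulting $\eta_U$, whereas the paper does $\mathrm{(d)}\Rightarrow\mathrm{(a)}$ by integrating flat one-forms of $\nabla$ directly (these are the same one-forms, reached from different sides); and you reconcile the cone-point order with the residue through the local formula $\nabla(dz)=-(\phi''/\phi')\,dz^2$, while the paper invokes Proposition~\ref{prop:turning difference} comparing $\nabla$ to a trivial local connection. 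Both variants reduce to the same elementary computation, so the approaches are essentially equivalent.
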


\begin{proof}
  By Proposition~\ref{prop:residueisorder}, a meromorphic section
  $\omega$ as in (b) is equivalent to a global trivialization of
  $\mathcal{L}_{\chi, \bfm}\otimes \Omega_X$ as in (c).  Since two global
  trivializations differ by scalar multiple, $\omega$ is taken to be
  defined up to scalar multiple in (b).

  To show that (c) implies (d) we note that a trivialization of
  $\mathcal{L}_{\chi, \bfm}\otimes \Omega_X$ given by (c) defines an
  isomorphism $\Omega_X \to \mathcal{L}_{\chi, \bfm}^*$.  Give
  $\Omega_X$ the dual connection $\nabla_{\chi, \bfm}^*$.  This
  connection does not depend on the choice of trivialization, as
  constant scalar multiplication commutes with any connection.

  Conversely,  to show that (d) implies (c), given a connection on $\Omega_X$ with regular
  singularities, let $(\chi, \bfm)$ be its holonomy data.  Then
  $\Omega_X\isom \mathcal{L}_{\chi, \bfm}$, so
  $ \mathcal{L}_{\chi, \bfm}^*\otimes\Omega_X \isom
  \mathcal{L}_{\chi^{-1}, -\bfm}\otimes\Omega_X$ is trivial.

  For the equivalence between (a) and (d), we've already seen that an
  affine structure defines a holomorphic connection on the cotangent
  bundle of $X\setminus C$.  Conversely, let $\{U_\alpha\}$ be a
  covering of $X\setminus C$ by simply connected open sets.  Over
  $U_\alpha$, let $\omega_\alpha$ be a flat holomorphic one-form, and
  define a chart $\phi_\alpha(z) = \int_p^z \omega_\alpha$ for some
  base point $p\in U_\alpha$.  These charts identify $\nabla$ with the
  standard connection on $\cx$, and it follows that the transition
  functions are affine.

  To see that moderate growth of a developing map $\Dev$ is equivalent to regular
  singularities of $\nabla$, consider the multivalued flat one-form $\omega =
  \Dev^*dz$ over $X\setminus C$.   In local coordinates around a
  puncture, the derivative $\Dev'$ has moderate growth if and only if
  $\Dev$ has moderate growth by the Cauchy Integral Formula (see
  \cite[Theorem~9.10]{sauloy}).  It follows that moderate growth of $\Dev$ is equivalent
  to moderate growth of $\omega$, which is equivalent to regular
  singularities of $\nabla$ by Theorem~\ref{thm:regulariffmoderate}.

  It remains to relate the order of a cone point $c$ to the residue of
  $\nabla$ at $c$.  We apply Proposition~\ref{prop:turning difference}
  below, where $\nabla_1=\nabla$ and $\nabla_2$ is the trivial
  connection, defined in a neighborhood of $c$ by a choice of local
  coordinates.  Since a positively oriented loop $\gamma$ around $c$ has
  turning number $1$ with respect to $\nabla_2$, the turning number of
  $\gamma$ with respect to $\nabla_1$ is $\tau(\gamma) = 1 - \Res_c
  \nabla$.  It follows that the order of $c$ is $-\Res_c\nabla$.
\end{proof}

In the sequel, we adopt the point of view of Definition~(d):
an \emph{affine surface} is a triple $(X, C,\nabla)$, where $X$ is a
Riemann surface, $C\subset X$ a discrete subset, and $\nabla$ is a connection on $\Omega_X$ with
regular singularities along $C$.  We will refer to $\nabla$ as an
\emph{affine structure on $(X, C)$}.

The Gauss-Bonnet Theorem for cone metrics generalizes in the obvious
way to affine surfaces (for the cone metric case in arbitrary
dimensions,
see McMullen \cite{McMullen-GaussBonnet} and the references therein).

\begin{prop}
  For any compact affine surface $(X, \nabla)$, the orders of the cone
  points sum to $-\chi(X)$.
\end{prop}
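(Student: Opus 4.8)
The plan is to reduce the statement to a residue computation, using the identification of an affine structure with a connection $\nabla$ on $\Omega_X$ with regular singularities along $C$ (Definition~(d) of Theorem~\ref{T:DefinitionOfAffine}). By that theorem, the order of a cone point $c$ equals $-\Res_c\nabla$, so the claim is equivalent to
\begin{equation*}
  \sum_{c\in C} \Res_c\nabla = \chi(X) = 2 - 2g.
\end{equation*}
Since $X$ is compact, $\Omega_X$ has degree $2g-2$, so it suffices to prove the general fact that for any line bundle $\mathcal{L}$ on a compact Riemann surface equipped with a connection $\nabla$ having regular singularities along a finite set $C$, one has $\sum_{c\in C}\Res_c\nabla = -\deg\mathcal{L}$. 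Applying this with $\mathcal{L} = \Omega_X$ then gives $\sum_c\Res_c\nabla = -(2g-2) = \chi(X)$, and hence the sum of the orders is $-\chi(X)$.

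First I would recall (this is essentially noted in the excerpt, citing \cite{ABW}) that the holonomy data $(\chi,\bfm)$ of $\nabla$ determines a flat bundle $\mathcal{L}_{\chi,\bfm}$ with $\Omega_X\cong \mathcal{L}_{\chi,\bfm}$ and $\deg\mathcal{L}_{\chi,\bfm} = \sum_c\bfm(c) = \sum_c(-\Res_c\nabla)$; combined with $\deg\Omega_X = 2g-2$ this is exactly the desired identity. If one prefers a self-contained argument, the cleanest route is: choose a nonzero meromorphic section $s$ of $\mathcal{L}$ (which exists on a compact Riemann surface), write its connection form locally as $\Gamma = ds/s$ away from $C$ (more precisely $\nabla s = \Gamma\otimes s$), and observe that $\Gamma$ is a meromorphic one-form on $X$ with at worst simple poles along $C$ together with poles/zeros at the zeros and poles of $s$; the residue of $\Gamma$ at a point of $C$ is $\Res_c\nabla$ (well-definedness from \eqref{eq:1}), and at a zero or pole of $s$ it is $\ord(s)$ by the usual logarithmic derivative computation. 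The Residue Theorem on the compact surface $X$ gives $\sum_{\text{all poles}}\Res\Gamma = 0$, i.e. $\sum_{c\in C}\Res_c\nabla + \sum_{p}\ord_p(s) = 0$, and since $\sum_p\ord_p(s) = \deg\mathcal{L}$ we get $\sum_{c}\Res_c\nabla = -\deg\mathcal{L}$ as wanted.

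The main technical point to get right — and the only place where care is needed — is the bookkeeping at points that are simultaneously in $C$ and zeros or poles of the chosen meromorphic section $s$: there the local expression of $\Gamma$ contributes $\Res_c\nabla + \ord_c^{\mathcal{L}}s$, which is exactly reconciled by Proposition~\ref{prop:residueisorder}. One can sidestep this entirely by choosing $s$ to have no zeros or poles on $C$ (always possible, since $C$ is finite and the divisor of $s$ can be moved by multiplying by a suitable meromorphic function, or simply by choosing $s$ generic near the finitely many points of $C$). With that choice the two sums decouple cleanly and the Residue Theorem finishes the proof. I expect no real obstacle here — the content is the Residue Theorem plus the dictionary of Theorem~\ref{T:DefinitionOfAffine} — so the write-up is short.
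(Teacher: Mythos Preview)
Your argument is correct and is genuinely different from the paper's. You work entirely on the algebraic side: using that the order at $c$ equals $-\Res_c\nabla$, you reduce to the identity $\sum_{c\in C}\Res_c\nabla = -\deg\Omega_X$, which you prove by writing the global connection form $\Gamma_\sigma = \nabla\sigma/\sigma$ for a meromorphic section $\sigma$ and applying the Residue Theorem. This is clean and self-contained; it never leaves the holomorphic category and uses nothing beyond the dictionary already set up in \S\ref{sec:affine}.

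The paper instead argues geometrically. It modifies $\nabla$ by subtracting a meromorphic one-form $\omega$ chosen so that $\nabla' = \nabla - \omega$ has $S^1$-holonomy; such a $\nabla'$ is compatible with a genuine conformal cone metric, and the classical Gauss--Bonnet theorem for cone metrics then gives the result. The point is that subtracting $\omega$ does not change the sum of residues (since $\sum_c\Res_c\omega = 0$), so the sum of orders is unchanged. Your approach avoids invoking Gauss--Bonnet for cone metrics as a black box and makes transparent that the statement is really just a degree computation; the paper's approach has the virtue of situating the result as a direct generalization of the metric Gauss--Bonnet theorem, which is the framing the authors want. Either way the content is the same residue identity.
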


\begin{proof}
  Consider $\delta = \log|\chi|\in H^1(X\setminus C, \reals)$, where $\chi$
  is the holonomy character, and let $\omega$ be the unique
  meromorphic one-form with at worst simple poles along $C$ and $[\Re
  \omega]=\delta$.  The connection $\nabla'=\nabla-\omega$ has $S^1$-holonomy,
  so it is compatible with a conformal metric with the same cone
  points and the same sum of cone angles.  The claim then follows from
  the usual Gauss-Bonnet Theorem.
\end{proof}

Recall that a cone point is a \emph{zero} if the real part of its
order greater than $-1$; otherwise it is a \emph{pole}.  Write
$C = P \cup Z$, where $Z$ and $P$ are the sets of zeros and poles. In
the sequel, the set $C$ of cone points will always be finite. We
enumerate them as $C = \{c_1, \ldots, c_n\}$ and write $m_j$ for the
order of $c_j$.

Veech \cite{veech93} showed that a compact genus $g$ Riemann surface $X$ has an affine
structure with a given set of cone points and orders if and only if
the orders sum to $2g-2$.  Moreover, he showed that the set of such
affine structures is an affine space over $\Omega(X)$. (A special case
of this appears in \cite{gunning_special}.)  This is
readily seen from the connection point of view via the Atiyah class,
which is the obstruction to a holomorphic vector bundle admitting a
connection (see \cite{atiyah_connections, huybrechts}).

\begin{theorem}
  Given a Riemann surface $X$ with a discrete subset $C\subset X$ and
  a function $m\colon C\to\cx$, there is an affine structure on $X$
  with cone points along $C$ and orders given by $m$ if and only if
  the values of $m$ sum to $-\chi(X)$ or if $X$ is not compact.  The set
  of affine structures with these cone points and orders is an affine
  space over $H^0(\Omega_X)$ and the set of all affine structures with
  these cone points is an affine space over $H^0(\Omega_X(C))$.
\end{theorem}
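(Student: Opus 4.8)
The plan is to reduce the statement to the theory of the Atiyah class, which gives the precise obstruction to the existence of a holomorphic connection on a holomorphic vector bundle, and then to match that obstruction with the numerical condition in the theorem.

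\medskip

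First I would recall the setup: by Theorem~\ref{T:DefinitionOfAffine}, an affine structure on $(X,C)$ with orders given by $m$ is the same thing as a holomorphic connection on $\Omega_X$ with regular singularities along $C$ and residue $-m(c)$ at each $c$. Such a connection is precisely a holomorphic connection on the modified bundle $\Omega_X$ in the logarithmic sense, i.e., a map $\nabla\colon\Omega_X\to\Omega_X(C)\otimes\Omega_X$ satisfying Leibniz; equivalently, fixing the residues $-m(c)$ amounts to choosing the ``polar part'' of $\nabla$ along $C$ and then asking for a genuine holomorphic connection on the twisted bundle $\Omega_X$ over the noncompact surface $X\setminus C$ extending with those residues. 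So the first step is to phrase the existence question as: does $\Omega_X$ admit a holomorphic logarithmic connection along $C$ with prescribed residues?

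\medskip

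Second, I would invoke the Atiyah class. For a holomorphic line bundle $\mathcal{L}$ on a compact Riemann surface $X$, there is an extension (the Atiyah sequence) whose class $\mathrm{at}(\mathcal{L})\in H^1(X,\Omega_X)\cong\cx$ vanishes if and only if $\mathcal{L}$ admits a holomorphic connection; moreover under the trace/degree identification $H^1(X,\Omega_X)\cong\cx$ one has $\mathrm{at}(\mathcal{L}) = 2\pi i \deg\mathcal{L}$ (up to the usual constant), so $\mathcal{L}$ admits a holomorphic connection iff $\deg\mathcal{L}=0$. For the logarithmic version one replaces $\Omega_X$ by $\Omega_X(C)$ and works with the logarithmic Atiyah class valued in $H^1(X,\Omega_X(C))$; prescribing residues $-m(c)$ shifts the relevant obstruction, and the computation of the residue map $H^1(X,\Omega_X(C))\to\bigoplus_{c\in C}\cx\to\cx$ composed with summation shows that a logarithmic connection with residues $-m(c)$ exists iff $\deg\Omega_X + \sum_{c} m(c) = 0$, i.e.\ $\sum_c m(c) = -\deg\Omega_X = -(2g-2) = -\chi(X)$. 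When $X$ is noncompact the obstruction group vanishes (or rather the relevant $H^1$ vanishes / the bundle is trivial in the needed sense), so a connection always exists; concretely one can build it directly on a Stein open surface or by the argument already used in the proof of Theorem~\ref{T:DefinitionOfAffine} via flat charts on simply connected pieces.

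\medskip

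Third, for the affine-space statement: once one affine structure with the given cone points and orders exists, any other such structure differs from it by a section of $\End(\Omega_X)\otimes(\text{polar-free part})$, i.e.\ the difference of two connections is $\mathcal{O}_X$-linear and hence a holomorphic one-form; to keep the residues fixed this one-form must be holomorphic everywhere, so it lies in $H^0(\Omega_X)$. Dropping the constraint on the orders (but keeping the cone points $C$), the difference is allowed to have simple poles along $C$, hence lies in $H^0(\Omega_X(C))$. Conversely, adding any such form to a fixed affine structure again yields one with the same cone points (and, in the first case, the same orders, since adding a holomorphic form does not change residues). This gives the torsor structures claimed. I expect the main obstacle to be the bookkeeping in the second step: identifying the logarithmic Atiyah class, computing the residue map on $H^1(X,\Omega_X(C))$ via the exact sequence $0\to\Omega_X\to\Omega_X(C)\to\bigoplus_{c}\cx_c\to 0$ and Serre duality, and pinning down the normalization constants so that the obstruction really reads $\sum_c m(c) + (2g-2) = 0$ rather than something off by a sign or a factor. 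Everything else is essentially formal once that identification is in place.
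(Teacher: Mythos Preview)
Your approach via the Atiyah class is exactly the paper's. The paper writes the Atiyah class explicitly as the \v{C}ech cocycle $dg_{\alpha\beta}/g_{\alpha\beta}\in H^1(\Omega_X(C))$, observes that this group vanishes whenever $C\neq\emptyset$ (Serre duality in the compact case, noncompact surfaces being Stein otherwise), and handles the remaining compact case $C=\emptyset$, $g=1$ by hand using the trivial connection on the trivial bundle $\Omega_X$. The affine-space statements follow just as you say.

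One small correction to your second step: the map you wrote as ``$H^1(X,\Omega_X(C))\to\bigoplus_c\cx$'' is in the wrong direction and in any case has trivial source when $C\neq\emptyset$. The bookkeeping you are after runs through the residue sequence $0\to\Omega_X\to\Omega_X(C)\to\bigoplus_c\cx_c\to 0$, whose connecting map $\bigoplus_c\cx\to H^1(\Omega_X)\cong\cx$ is summation; the achievable residue tuples form the coset on which this sum equals $-\deg\Omega_X$. The paper sidesteps this computation entirely: it gets existence of \emph{some} logarithmic connection from $H^1(\Omega_X(C))=0$, and then the constraint on orders comes from the torsor structure over $H^0(\Omega_X(C))$ combined with the Gauss--Bonnet proposition proved immediately before the theorem.
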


\begin{proof}
  Choose an open cover $\mathcal{U} = \{U_\alpha\}$ of $X$ together
  with nowhere-zero forms $\omega_\alpha\in \Omega_{X}(U_\alpha)$.
  Let $g_{\alpha\beta} = \omega_\alpha/\omega_\beta$, a nonzero
  holomorphic function over $U_{\alpha\beta} = U_\alpha\cap U_\beta$.
  A connection over $U_\alpha$ is defined by
  $\nabla \omega_\alpha = \eta_\alpha \otimes \omega_\alpha$ for some
  choice of meromorphic forms
  $\eta_\alpha \in \Omega_{X}(C)(U_\alpha)$.  We calculate
  \begin{align*}    
    &\nabla \omega_\alpha = \eta_\alpha\otimes\omega_\alpha=
    g_{\alpha\beta} \eta_\alpha \otimes\omega_\beta\\
   & \nabla g_{\alpha\beta}\omega_\beta = dg_{\alpha\beta}\otimes\omega_\beta + g_{\alpha\beta}\eta_\beta\otimes\omega_\beta,
  \end{align*}
  so these glue to a global connection when
  \begin{equation}
    \label{eq:atiyah_class}
    \eta_\alpha - \eta_\beta = \frac{dg_{\alpha\beta}}{g_{\alpha\beta}}.
  \end{equation}
  The cocycle on the right-hand-side defines the Atiyah class $A \in
  H^1(\Omega_{X}(C))$, and we see that the desired connection exists
  if and only if $A=0$.

  For any $X$ with $C\neq \emptyset$, we have $H^1(\Omega_X(C))=0$, so
  the desired connection exists.  If $C=\emptyset$ and $X$ is genus
  one, then $H^1(\Omega_X)\neq 0$, so this argument fails.  However,
  in this case $\Omega_X$ is trivial,  so it admits the trivial connection.

  Once we have a single connection, the affine structure of the space
  of connections amounts to the fact that the difference of two
  connections with regular singularities is a meromorphic one-form
  with at worst simple poles at the cone points.
\end{proof}

If $\omega$ is a multivalued flat one-form over an affine surface $(X,
\nabla)$, and $\alpha\in H^0(\Omega_X(C))$, then
$e^{\int\alpha}\omega$ is a multivalued flat section for the
connection $\nabla' = \nabla-\alpha$.  This is Veech's description of
this affine structure, which we dubbed the \emph{exponential action}
in \cite{ABW}.

\begin{prop}
  \label{prop:turning difference}
  Consider two affine structures, $\nabla_1$ and $\nabla_2$, on
  $(X,C)$, and let $\theta = \nabla_1-\nabla_2\in \Omega_X(C)$.
  For any closed curve $\gamma\colon[0,1]\to X\setminus C$, we then
  have
  \begin{equation*}
    \tau_{1}(\gamma)-\tau_{2}(\gamma) = - \frac{1}{2\pi i}\int_\gamma\theta,
  \end{equation*}
  where $\tau_i$ represents the turning number with respect to $\nabla_i$.
\end{prop}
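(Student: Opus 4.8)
The plan is to reduce everything to the local formula \eqref{eq:turning_number} and track how the covariant derivative entering it changes when the affine structure is altered by $\theta$. First I would record how the \emph{dual} connections transform. Writing $\nabla_1 = \nabla_2 + \theta$ as connections on $\Omega_X$, where $\theta\in H^0(\Omega_X(C))$ acts on a section $s$ by $s\mapsto\theta\otimes s$, the defining Leibniz identity $\langle\nabla s, t^*\rangle + \langle s, \nabla^* t^*\rangle = d\langle s, t^*\rangle$ forces the transformation rule on $T_X$: subtracting this identity for $(\nabla_2,\nabla_2^*)$ from the one for $(\nabla_1,\nabla_1^*)$ gives $\theta\langle s, t^*\rangle + \langle s, (\nabla_1^*-\nabla_2^*)t^*\rangle = 0$, hence $\nabla_1^* = \nabla_2^* - \theta$. (Equivalently, one can see the sign from the connection-form transformation \eqref{eq:1}.)

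The second step is to translate this into the covariant derivatives $D_1, D_2$ along $\gamma$. By definition $D_i(v)$ is the contraction of $\nabla_i^* v$ with $\gamma'$, so the previous step yields, pointwise along $\gamma$,
\[
  D_1(v) - D_2(v) = -\langle\theta,\gamma'\rangle\, v = -(\gamma^*\theta)\, v ,
\]
a scalar multiple of $v$. Applying this with $v=\gamma'(t)$ and dividing by $\gamma'(t)$ — legitimate because the tangent space at $\gamma(t)$ is one-dimensional and $\gamma'(t)\neq 0$ — gives
\[
  \frac{D_1(\gamma'(t))}{\gamma'(t)} - \frac{D_2(\gamma'(t))}{\gamma'(t)} = -(\gamma^*\theta)(t) .
\]

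Finally, integrating over $[0,1]$ and dividing by $2\pi i$: the left-hand side becomes $\tau_1(\gamma) - \tau_2(\gamma)$ by \eqref{eq:turning_number}, and the right-hand side becomes $-\tfrac{1}{2\pi i}\int_0^1\gamma^*\theta = -\tfrac{1}{2\pi i}\int_\gamma\theta$, which is the claim. I do not expect a genuine obstacle here; the only points needing care are the sign in the dual-connection transformation and the observation that the quotient $D(\gamma')/\gamma'$ is meaningful on a curve in a Riemann surface. As a cross-check one can instead run the argument through the reformulation $\tau(\gamma) = \tfrac{1}{2\pi i}(\log g(0) - \log g(1))$ recorded after \eqref{eq:turning_number}, where $v_i = g_i\gamma'$ is $\nabla_i^*$-parallel: then $g_i'/g_i = -D_i\gamma'/\gamma'$, so $\big(\log(g_1/g_2)\big)' = \gamma^*\theta$, and integrating recovers the same identity.
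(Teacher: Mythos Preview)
Your proof is correct and follows essentially the same approach as the paper: both compute $D_1(v)-D_2(v)=-\theta(\gamma')v$, divide by $\gamma'$, and integrate using \eqref{eq:turning_number}. Your derivation of the sign via the dual-connection Leibniz rule is a bit more explicit than the paper's, which simply asserts the identity $D_1(v)-D_2(v)=-\theta(\gamma')v$, but the argument is otherwise identical.
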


\begin{proof}
  Let $D_i$ be the covariant derivative of $\nabla_i$.  We then have
  $D_1(v)-D_2(v) = - \theta(\gamma')v$ for any vector field $v$ along
  $\gamma$.  It follows that
  \begin{align*}
    \tau_{1}(\gamma)-\tau_{2}(\gamma) &= \frac{1}{2\pi i}\int_0^1\left(
                                    \frac{D_1(\gamma'(t))}{\gamma'(t)}-
                                    \frac{D_2(\gamma'(t))}{\gamma'(t)}\right)\, dt \\
                                  &= - \frac{1}{2\pi
                                    i}\int_0^1 \theta(\gamma'(t))\, dt\\
                                  &= - \frac{1}{2\pi
                                    i} \int_\gamma \theta.\qedhere
  \end{align*}
\end{proof}

Recall that a meromorphic one-form $\omega$ on $X$ determines a
translation structure on $X$ and so an affine structure whose cone
points are the zeros and poles of $\omega$.  Denote by $\nabla_\omega$
the connection corresponding to this affine structure, which is
characterized by $\nabla_\omega \omega=0$. Denote by
$\tau_\omega(\gamma)$ the turning number with respect to this affine
structure.

\begin{prop}
  \label{prop:turning de Rham}
  Consider an affine surface $(X, C, \nabla)$, and let $\omega$ be a
  meromorphic one-form on $X$ whose zeros and poles belong to $C$.
  Then for any closed curve $\gamma$ on $X \setminus C$, its turning
  number with respect to $\nabla$ is given by
  \begin{equation*}
    \tau(\gamma) = \tau_\omega(\gamma) - \frac{1}{2\pi i}\int_\gamma\theta,
  \end{equation*}
  where $\theta$ is the meromorphic form $\theta=\nabla\omega/\omega$.
\end{prop}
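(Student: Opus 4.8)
The plan is to deduce this directly from Proposition~\ref{prop:turning difference}, applied with $\nabla_1 = \nabla$ and $\nabla_2 = \nabla_\omega$; the only real content is to identify the difference $\nabla - \nabla_\omega$ with the one-form $\theta = \nabla\omega/\omega$ appearing in the statement.

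First I would check that $\theta$ is a well-defined section of $\Omega_X(C)$, i.e.\ a meromorphic one-form with at worst simple poles along $C$. Working in a local holomorphic trivialization $s$ of $\Omega_X$ near a point of $C$, write $\omega = f s$ with $f$ meromorphic and $\nabla s = \Gamma\otimes s$ with $\Gamma$ having at worst a simple pole; then $\nabla\omega = (df/f + \Gamma)\otimes\omega$, so $\theta = df/f + \Gamma$. Since the zeros and poles of $\omega$ lie in $C$, the logarithmic derivative $df/f$ has at worst simple poles along $C$, and $\Gamma$ does by regularity of $\nabla$; hence $\theta \in \Omega_X(C)$, and it is independent of the trivialization because $\omega$ is globally defined.

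Next I would note that $\nabla_\omega$ really is an affine structure on $(X, C)$ in the sense of Definition~(d), i.e.\ has regular singularities along $C$: in a local coordinate $z$ with $\omega = z^k\,dz$ near a zero or pole of order $k$ (with $k=0$ at points of $C$ that are neither), the relation $\nabla_\omega\omega = 0$ forces the connection form of $\nabla_\omega$ in the trivialization $dz$ to be $-\tfrac{k}{z}\,dz$, which has at worst a simple pole. Thus Proposition~\ref{prop:turning difference} applies to the pair $\nabla_1 = \nabla$, $\nabla_2 = \nabla_\omega$, and the difference $\nabla-\nabla_\omega$ is $\mathcal{O}_X$-linear, hence given by multiplication by some section of $\Omega_X(C)$.

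Finally, applying $\nabla-\nabla_\omega$ to the nonzero meromorphic section $\omega$ gives $(\nabla-\nabla_\omega)\omega = \nabla\omega - 0 = \theta\otimes\omega$, which identifies $\nabla-\nabla_\omega$ with $\theta$. Substituting $\theta = \nabla_1-\nabla_2$ into Proposition~\ref{prop:turning difference} yields $\tau(\gamma) - \tau_\omega(\gamma) = -\tfrac{1}{2\pi i}\int_\gamma\theta$, which is exactly the claim. There is no substantive obstacle; the only point requiring a moment's care is verifying that $\theta$ has poles of order at most one along $C$, i.e.\ that $\nabla_\omega$ belongs to the same affine family of connections (over $H^0(\Omega_X(C))$) as $\nabla$, so that Proposition~\ref{prop:turning difference} is legitimately applicable.
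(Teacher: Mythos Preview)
Your proposal is correct and follows essentially the same approach as the paper: compute $(\nabla-\nabla_\omega)\omega = \nabla\omega = \theta\otimes\omega$ to identify $\nabla-\nabla_\omega$ with $\theta$, then invoke Proposition~\ref{prop:turning difference}. Your additional care in verifying that $\theta\in\Omega_X(C)$ and that $\nabla_\omega$ has regular singularities along $C$ is more thorough than the paper, which takes these points for granted.
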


\begin{proof}
  We have
  \begin{equation*}
    (\nabla-\nabla_\omega)\omega = \nabla \omega = \theta\otimes\omega,
  \end{equation*}
  so $\nabla-\nabla_\omega=\theta$.  The claim then follows from Proposition~\ref{prop:turning difference}.
\end{proof}


\section{Families of affine surfaces}
\label{sec_families}

In this section, we define the notion of a family of affine stable
curves, generalizing smooth affine surfaces, and we construct the
corresponding fine moduli space $\barModA$.   We work
in the category of complex analytic spaces throughout this section.

\paragraph{Relative connections.}

We recall the notion of a relative connection on a family of curves;
see \cite{conrad_riemann_hilbert,deligne_equations_differentielles}
for details.
Given a proper flat family of pointed stable curves
$(\pi\colon\mathcal{X}\to B, \cC)$ (where $\cC = (c_1, \ldots, c_n)$ is a
tuple of disjoint sections $c_i\colon B\to \mathcal{X}$) together with a line
bundle $\mathcal{L}\to\mathcal{X}$, a \emph{relative connection} on
$\mathcal{X}$ with \emph{regular singularities along $\cC$} is a
$\pi^{-1}\mathcal{O}_B$-linear homomorphism of sheaves 
\begin{equation*}
  \nabla\colon \mathcal{L}\to \Omega_{\mathcal{X}/B}(\cC)\otimes_{\mathcal{O}_\mathcal{X}} \mathcal{L},
\end{equation*}
which satisfies for any holomorphic function $f$ and local section
$\sigma$ the Leibniz rule,
\begin{equation*}
  \nabla(f\sigma) = df \otimes \sigma + f \nabla\sigma.
\end{equation*}
Here $\Omega_{\mathcal{X}/B}$ is the relative dualizing sheaf, which
is the relative cotangent bundle $\Omega_{\mathcal{X}} / \pi^*\Omega_B$
when the family is smooth.   As for ordinary connections, the
difference of two relative connections is
$\mathcal{O}_\mathcal{X}$-linear and so a section of 
$\End(\mathcal{L})\otimes \Omega_{\mathcal{X}/B}(\cC)=
\Omega_{\mathcal{X}/B}(\cC)$, or equivalently a section of the extended
Hodge bundle $\pi_*\Omega_{\mathcal{X}/B}(\cC)$.

\begin{definition}
A \emph{family of affine stable curves} is a proper flat family of pointed
  stable curves $(\pi\colon\mathcal{X}\to B,\cC)$, together with a
  connection on $\Omega_{\mathcal{X}/B}$ with regular singularities
  along $\cC$. We will call such a connection an \emph{affine structure} on $(\mathcal{X}, \cC)$.  
\end{definition}

On a stable curve, an affine structure amounts to an affine structure
on each irreducible component with cone points at the nodes, together
with a  condition on the orders of any two cone points glued to a
node, generalizing the ``matching orders'' condition of \cite{BCGGM}.

\begin{prop}
  An affine structure $\nabla$ on a pointed stable curve $(X,C)$ is
  equivalent to an affine structure $\tnabla$ on the normalization
  $p\colon \widetilde{X}\to X$ such that for any two points $n_1, n_2$
  lying over a node $n$ of $X$, the orders of $\tnabla$ satisfy
  \begin{equation}
    \label{eq:matching}
    \ord_{n_1}\tnabla + \ord_{n_2} \tnabla = -2.
  \end{equation}
\end{prop}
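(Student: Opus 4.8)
The plan is to reduce everything to a local computation at the nodes. Away from the nodes, $p$ restricts to an isomorphism onto $X$ with its nodes removed, under which $\Omega_X$ becomes the ordinary cotangent bundle, so a connection with regular singularities along $C$ transports transparently in both directions; by an affine structure on the normalization I mean one on the pair $(\tilde X,\ p^{-1}(C)\cup p^{-1}(\text{nodes}))$. Thus it suffices to match connections on $\Omega_X$ near a node $n$ with connections on $\Omega_{\tilde X}$ near the two branch points $n_1,n_2$ that have regular singularities there. The one structural input I would recall is the local description of the dualizing sheaf: a neighborhood $U$ of $n$ is $\{xy=0\}$, its preimage is a disjoint union of disks $\tilde U_1,\tilde U_2$ with coordinates $x,y$, and $\Omega_X|_U$ is freely generated by the section $\eta$ restricting to $dx/x$ on $\tilde U_1$ and $-dy/y$ on $\tilde U_2$ --- equivalently $p^*\Omega_X\cong\Omega_{\tilde X}(n_1+n_2)$ near $p^{-1}(n)$, with $\Omega_{\tilde X}$ sitting inside as the subsheaf, so that a section $f\eta$ of $\Omega_X|_U$ corresponds to a pair of meromorphic forms on $\tilde U_1,\tilde U_2$ with at worst simple poles at $n_1,n_2$ and \emph{opposite} residues.

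For the forward direction, since $n\notin C$ a connection $\nabla$ on $\Omega_X$ near $n$ is a map $\Omega_X\to\Omega_X^{\otimes2}$, so $\nabla\eta=h\,\eta\otimes\eta$ for a unique $h\in\cO_X$ near $n$; restricting to the branches gives holomorphic $h_1(x),h_2(y)$ with $h_1(0)=h_2(0)$. Pulling back to $\tilde U_1$ and passing from the frame $dx/x$ to the frame $dx=x\cdot(dx/x)$ of $\Omega_{\tilde X}$, the Leibniz rule gives $\tnabla(dx)=\bigl(1+h_1(x)\bigr)\tfrac{dx}{x}\otimes dx$, a connection with a regular singularity at $n_1$ of residue $1+h_1(0)$; the analogous computation at $n_2$ gives residue $1-h_2(0)$. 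Since $\ord=-\Res$ and $h_1(0)=h_2(0)$, we get $\ord_{n_1}\tnabla+\ord_{n_2}\tnabla=-(1+h_1(0))-(1-h_2(0))=-2$, which is \eqref{eq:matching}; at points of $p^{-1}(C)$ the orders are unchanged.

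For the converse, away from the nodes $\tnabla$ transports to a connection on $\Omega_X$ through $p$; near $n$ I would write the connection forms of $\tnabla$ in the frames $dx,dy$, with simple poles of residues $r_1,r_2$, and solve the equations above backwards to define $h_1,h_2$ with $h_1(0)=r_1-1$ and $h_2(0)=1-r_2$. Here is where \eqref{eq:matching} enters: it says exactly $r_1+r_2=2$, which is exactly $h_1(0)=h_2(0)$, so $(h_1,h_2)$ glues to $h\in\cO_X$ near $n$ and $\nabla\eta:=h\,\eta\otimes\eta$ extends $\tnabla$ across the node. These local connections agree on overlaps with the one already defined on $X\setminus(\text{nodes})$, and since a connection on the line bundle $\Omega_X$ is determined by its (meromorphic) connection form on a dense open, they glue to a global $\nabla$ with regular singularities along $C$. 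The two constructions are visibly mutually inverse, because on $X\setminus(\text{nodes})$ both are literally the transport under $p$.

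I do not anticipate a genuine obstacle --- this is a careful-bookkeeping argument. The two points I would be most careful about are: the $+1$ shift of residue incurred in passing between the frames $dx/x$ and $dx$, i.e.\ between $p^*\Omega_X=\Omega_{\tilde X}(n_1+n_2)$ and $\Omega_{\tilde X}$; and the conceptual identification making the converse work, namely that \eqref{eq:matching} is precisely the statement that the pulled-back connection form has opposite residues at the two branches of the node, which is exactly what distinguishes a connection on $\Omega_X$ from a connection on each branch of $\tilde X$ separately.
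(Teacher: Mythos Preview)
Your argument is correct and follows essentially the same approach as the paper: pull back $\nabla$ via $p$ to a connection on $p^*\Omega_X\cong\Omega_{\tilde X}(p^{-1}(N))$, then record the $+1$ shift in residue incurred when changing frame from $dx/x$ to $dx$ (which is exactly the paper's invocation of the transformation law \eqref{eq:1} with $g$ having a simple zero). The paper phrases this more abstractly via the opposite-residue condition for sections of $\Omega_X$ and leaves the converse implicit, whereas you carry out the same computation explicitly in coordinates and spell out both directions; the content is the same.
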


\begin{proof}
  Let $C' = p^{-1}(C\cup N)$, where $N$ is the set of nodes of $X$.
  The pullback $p^*\nabla$ is a connection on $p^*\Omega_X = \Omega_{\widetilde{X}}(p^{-1}(N))$ with regular
  singularities along $C'$.  Over any node $n$ of $X$, we have
  \begin{equation*}
    \Res_{n_1}p^*\nabla + \Res_{n_2} p^*\nabla = 0
  \end{equation*}
  by the opposite residue condition for stable forms in $\Omega_X$.
  The restriction of $p^*\nabla$ to $\Omega_{\widetilde{X}}$ defines a
  connection $\tnabla$ whose residues over the nodes is given by
  $\Res_{n_j}\tnabla = \Res_{n_j} p^*\nabla +1$ by \eqref{eq:1} (with
  $g$ having a simple zero at $n_j$).  Equation \eqref{eq:matching}
  then follows from Theorem~\ref{T:DefinitionOfAffine}.
\end{proof}

Given any family of pointed stable curves $(\pi\colon\mathcal{X}\to B, \cC)$,
assigning to an open $U\subset B$ the set of affine
structures over $U$ defines a sheaf of sets $\mathcal{A}_{\mathcal{X}/B}$ over~$B$.
The sheaf $\mathcal{A}_{\mathcal{X}/B}$ is a torsor over the Hodge bundle
$\pi_*\Omega_{\mathcal{X}/B}(\cC)$, meaning that for any affine
structure $\nabla$ over $U$, the map $\omega\mapsto \nabla+\omega$ is
a bijection $\Gamma(U, \Omega_{\mathcal{X}/B}(\cC))\to \Gamma(U, \mathcal{A}_{\mathcal{X}/B})$.

\begin{lemma}
  \label{lem:affine_over_stein}
  If $B$ is a Stein space, then every family of pointed stable curves
  $(\pi\colon\mathcal{X}\to B, \cC)$ has an affine structure.
\end{lemma}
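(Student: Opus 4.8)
The plan is to reduce the existence of an affine structure to the vanishing of an obstruction class living in a coherent cohomology group over $B$, and then invoke Cartan's Theorem~B for Stein spaces. Recall that the sheaf $\mathcal{A}_{\mathcal{X}/B}$ of affine structures is a torsor over the Hodge bundle $\mathcal{E} := \pi_*\Omega_{\mathcal{X}/B}(\cC)$, which is a coherent (indeed locally free) sheaf on $B$. A torsor over an abelian sheaf $\mathcal{E}$ has a well-defined class in $H^1(B, \mathcal{E})$ whose vanishing is equivalent to the existence of a global section. Concretely: cover $B$ by opens $\{U_i\}$ on which an affine structure $\nabla_i$ exists — this is possible locally because the relevant obstruction over a small (Stein, or even polydisk) base is the relative Atiyah class, and one argues as in the proof of the theorem following Proposition~\ref{prop:turning de Rham} that this class vanishes locally, either because $\cC$ meets every fiber (so that $R^1\pi_*\Omega_{\mathcal{X}/B}(\cC) = 0$ fiberwise and one can patch) or by shrinking to trivialize $\Omega_{\mathcal{X}/B}$. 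The differences $\nabla_i - \nabla_j \in \Gamma(U_i\cap U_j, \mathcal{E})$ then form a Čech $1$-cocycle, whose class $[\,\{\nabla_i - \nabla_j\}\,] \in H^1(B, \mathcal{E})$ is exactly the obstruction.

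The key steps, in order: (1) verify that $\mathcal{A}_{\mathcal{X}/B}$ is locally non-empty, so that the obstruction cocycle can be formed — here I would note that the relative Atiyah class of $\Omega_{\mathcal{X}/B}$ lives in $H^1$ of the complex computing $\mathbb{H}^1$ with the sheaf $\Omega_{\mathcal{X}/B}(\cC)\otimes\End(\Omega_{\mathcal{X}/B})=\Omega_{\mathcal{X}/B}(\cC)$, and restricts fiberwise to the Atiyah classes computed in Section~\ref{sec:affine}, which vanish whenever $C$ is nonempty; after shrinking $B$ one gets a genuine relative connection locally. (2) Form the Čech $1$-cocycle of differences and identify its class in $H^1(B,\mathcal{E})$ as the obstruction to gluing. (3) Apply Cartan's Theorem~B: since $B$ is Stein and $\mathcal{E}$ is coherent, $H^1(B,\mathcal{E}) = 0$, so the cocycle is a coboundary $\nabla_i - \nabla_j = \mu_i - \mu_j$ with $\mu_i \in \Gamma(U_i, \mathcal{E})$; then $\nabla_i - \mu_i$ agree on overlaps and glue to a global affine structure.

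The main obstacle is step (1): making precise that a relative connection on $\Omega_{\mathcal{X}/B}$ with regular singularities along $\cC$ exists locally on $B$. One has to handle the nodal fibers, where $\Omega_{\mathcal{X}/B}$ is the relative dualizing sheaf rather than an honest line bundle of differentials, and confirm that the opposite-residue condition at nodes does not obstruct the construction — but this is exactly what the preceding Proposition (equivalence of affine structures on a stable curve with affine structures on the normalization satisfying \eqref{eq:matching}) is set up to control, and fiberwise the construction via the Atiyah class in the proof of the theorem after Proposition~\ref{prop:turning de Rham} already covers the stable case once we observe $H^1(X_b,\Omega_{X_b}(\cC_b)) = 0$ for every fiber with $\cC_b\neq\emptyset$. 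By semicontinuity and base change, $\pi_*\Omega_{\mathcal{X}/B}(\cC)$ is then locally free and $R^1\pi_*\Omega_{\mathcal{X}/B}(\cC) = 0$, which both guarantees local existence of $\nabla_i$ and ensures $\mathcal{E}$ is coherent so that Theorem~B applies.
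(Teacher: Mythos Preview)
Your proposal is correct and uses the same essential ingredients as the paper: the Atiyah class as obstruction, the vanishing $R^1\pi_*\Omega_{\mathcal{X}/B}(\cC)=0$, and Cartan's Theorem~B to kill $H^1(B,\pi_*\Omega_{\mathcal{X}/B}(\cC))$. The organization differs, however. You work in two stages---first establish local existence of $\nabla_i$ so that the torsor $\mathcal{A}_{\mathcal{X}/B}$ has a \v{C}ech class in $H^1(B,\mathcal{E})$, then kill that class---whereas the paper bypasses the local step entirely: it places the relative Atiyah class directly in $H^1(\mathcal{X},\Omega_{\mathcal{X}/B}(\cC))$, uses the Leray spectral sequence together with $R^1\pi_*\Omega_{\mathcal{X}/B}(\cC)=0$ (obtained via relative duality, $R^1\pi_*\Omega_{\mathcal{X}/B}(\cC)\cong(\pi_*\mathcal{O}_{\mathcal{X}}(-\cC))^*=0$) to identify this group with $H^1(B,\mathcal{E})$, and then applies Cartan~B once. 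Your step~(1), which you flag as the main obstacle, is in fact exactly the paper's global argument restricted to a Stein open $U_i$, so you are effectively running the same proof twice; the paper's route avoids this redundancy and also sidesteps the semicontinuity/base-change discussion, since relative duality gives $R^1\pi_*=0$ directly without appealing to fiberwise statements.
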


\begin{proof}
  The Atiyah class, defined in \eqref{eq:atiyah_class}, generalizes in
  the obvious way to define the relative Atiyah class $A\in
  H^1(\Omega_{\mathcal{X}/B}(\cC))$, and the desired relative connection
  exists if and only if $A=0$.

  By relative duality,
  \begin{equation*}
    R^1 \pi_* \Omega_{\mathcal{X}/B}(\cC) = \pi_*
    \mathcal{O}_\mathcal{X}(-\cC)^* =0,
  \end{equation*}
  so it follows from the Leray spectral sequence of $\mathcal{X}\to B$
  that
  \begin{equation*}
    H^1(B, \pi_* \Omega_{\mathcal{X}/B}(\cC))\to H^1(\mathcal{X}, \Omega_{\mathcal{X}/B}(\cC))
  \end{equation*}
  is an isomorphism.  If $B$ is Stein, the higher cohomology of any
  coherent sheaf on $B$
  vanishes by Cartan's Theorem~B \cite{Cartan}, and the claim follows.
\end{proof}

\paragraph{The moduli space.}

Recall (for example, see \cite{acgh2}) that a morphism of families of
curves $(\pi'\colon\mathcal{X}'\to B')\to (\pi\colon\mathcal{X}\to B)$
is a pair of morphisms $(f, \bar{f})$ such that the following diagram
commutes, 

\begin{center}    
\begin{tikzcd}
\cX' \arrow[r, "\overline{f}"] \arrow[d, "\pi'"]
& \cX \arrow[d, "\pi"] \\
B' \arrow[r, "f"]
& B
\end{tikzcd}
\end{center}

\noindent and the induced map $\mathcal{X}'\to \mathcal{X}\times_B B'$
is an isomorphism.  We define a morphism of families of affine stable
curves
$(\mathcal{X}'\to B', \nabla')\to (\mathcal{X}\to
B, \nabla)$ to be a morphism $(f, \bar{f})$ of the underlying families
such that $\bar{f}^*\nabla = \nabla'$.  This defines a category
fibered in groupoids $\barModA$ over complex analytic spaces
whose objects are families of affine stable curves.

\begin{theorem}\label{T:UniversalProperty}
  For any family of pointed stable curves $\pi\colon\mathcal{X}\to B$,
  there is a bundle of affine spaces
  $\mathcal{A}_{\mathcal{X}/B}\to B$ modeled on the extended Hodge
  bundle $\pi_* \Omega_{\mathcal{X}/B}(\cC)$ together with an affine
  structure $\nabla_B$ on the pullback family
  $\widetilde{\mathcal{X}} = \mathcal{X}\times_B \mathcal{A}_{\cX/B}$.  This
  family of affine surfaces is characterized by the following
  universal property: given a morphism
  $(f, \bar{f})\colon (\pi'\colon \mathcal{X}'\to B')\to
  (\pi\colon\mathcal{X}\to B)$, an affine structure $\nabla'$ on
  $\mathcal{X}'$ determines a unique lift
  $(\hat{f}, \tilde{f}) \colon (\mathcal{X}'\to B')\to
  (\widetilde{\cX}\to\mathcal{A}_{\mathcal{X}/B})$ such that
  $\tilde{f}^*\nabla_B= \nabla'$.
\end{theorem}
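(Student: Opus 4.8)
The plan is to construct $\mathcal{A}_{\mathcal{X}/B}$ explicitly as the total space of the torsor $\mathcal{A}_{\mathcal{X}/B}$ of affine structures (the sheaf of sets introduced just above), realized as a bundle of affine spaces over $B$ by the standard device of turning an $\mathcal{O}_B$-module torsor into a geometric affine bundle. Concretely, I would first observe that the extended Hodge bundle $\pi_*\Omega_{\mathcal{X}/B}(\cC)$ is a locally free $\mathcal{O}_B$-module (by cohomology-and-base-change, using that $H^1$ of $\Omega_{X/B}(\cC)$ on fibers vanishes, as in the proof of Lemma~\ref{lem:affine_over_stein}), hence the total space of a vector bundle $V\to B$; then the sheaf $\mathcal{A}_{\mathcal{X}/B}$, being a torsor over the sheaf of sections of $V$, is the sheaf of sections of a (possibly nontrivial) affine bundle $\mathcal{A}_{\mathcal{X}/B}\to B$ modeled on $V$. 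Over a Stein (or any sufficiently small) open cover of $B$ the torsor is trivial by Lemma~\ref{lem:affine_over_stein}, so local trivializations exist, and the transition maps are affine maps whose linear parts are the transition maps of $V$; this is exactly the gluing data for an affine bundle. On the pullback family $\widetilde{\mathcal{X}} = \mathcal{X}\times_B\mathcal{A}_{\mathcal{X}/B}$, the tautological point of $\mathcal{A}_{\mathcal{X}/B}$ over each point of the base is itself an affine structure, and these fit together into a relative connection $\nabla_B$ on $\Omega_{\widetilde{\mathcal{X}}/\mathcal{A}_{\mathcal{X}/B}}$: locally, writing a point of $\mathcal{A}_{\mathcal{X}/B}$ over $U\subset B$ as $\nabla_0 + \omega$ with $\omega\in\Gamma(U,\Omega_{\mathcal{X}/B}(\cC))$, one sets $\nabla_B := \operatorname{pr}^*\nabla_0 + \widetilde{\omega}$, where $\widetilde{\omega}$ is the tautological section of $\Omega_{\widetilde{\mathcal{X}}/\mathcal{A}_{\mathcal{X}/B}}(\cC)$; one checks this is independent of the local choice of $\nabla_0$ because a change $\nabla_0\mapsto\nabla_0+\eta$ shifts the affine coordinate by $-\eta$, which cancels.

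Next I would verify the universal property. Given $(f,\bar f)\colon(\mathcal{X}'\to B',\cC')\to(\mathcal{X}\to B,\cC)$ with an affine structure $\nabla'$ on $\mathcal{X}'$, I need a unique $f$-lift $\tilde f\colon B'\to\mathcal{A}_{\mathcal{X}/B}$ with $\hat f^*\nabla_B = \nabla'$, where $\hat f\colon\mathcal{X}'\to\widetilde{\mathcal{X}}$ is the induced map. The point is that, since $\mathcal{X}'\cong\mathcal{X}\times_B B'$, pullback along $\bar f$ identifies affine structures on $\mathcal{X}'$ with sections over $B'$ of the pulled-back torsor $f^*\mathcal{A}_{\mathcal{X}/B}$; and sections over $B'$ of $f^*\mathcal{A}_{\mathcal{X}/B}$ are, by the universal property of fiber products of spaces, exactly $f$-lifts $B'\to\mathcal{A}_{\mathcal{X}/B}$. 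So $\nabla'$ corresponds to a unique such lift $\tilde f$, and unwinding the construction of $\nabla_B$ shows $\hat f^*\nabla_B=\nabla'$. Compatibility of $\tilde f$ with the marked sections and the commuting-square data is automatic from the fiber-product identification $\mathcal{X}'\cong\mathcal{X}\times_B B'$. Finally, I would remark that this universal property is precisely the statement that the category fibered in groupoids $\barModA$ is represented by $\mathcal{A}_{\mathcal{X}/B}\to B$ when $\pi$ is itself the universal family over $\barmoduli$, giving Theorem~\ref{T:Families} as a corollary once one invokes the existence of the universal stable curve.

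The main obstacle I anticipate is the purely formal but slightly delicate bookkeeping needed to upgrade the sheaf-theoretic torsor $\mathcal{A}_{\mathcal{X}/B}$ to an honest bundle of affine \emph{spaces} in the analytic category — i.e., checking that the torsor is locally trivial in a way that glues to a geometric object, which rests on Lemma~\ref{lem:affine_over_stein} together with the local freeness of $\pi_*\Omega_{\mathcal{X}/B}(\cC)$. Everything after that (definition of $\nabla_B$, its independence of local choices, and the lifting bijection) is a direct consequence of the torsor structure and the defining property of fiber products, so the only real content is the base-change/coherence input, which is already essentially present in the proof of Lemma~\ref{lem:affine_over_stein}. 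I would also take care to note that no compactness or properness of the fibers of $\mathcal{A}_{\mathcal{X}/B}\to B$ is claimed or needed — the affine-bundle fibers are affine spaces, not projective — so no special argument is required at the boundary of $\barmoduli$ beyond what the relative-duality vanishing already provides.
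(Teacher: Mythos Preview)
The proposal is correct and follows essentially the same approach as the paper: trivialize the torsor locally over a Stein cover using Lemma~\ref{lem:affine_over_stein}, build the affine bundle and tautological connection from these local data, then glue and verify the universal property. Your presentation is somewhat more conceptual (explicit torsor language, invoking the fiber-product universal property for the lifting bijection) whereas the paper writes out the Stein case explicitly with the formula $\nabla_{\mathcal{X}/B}=\pi_{\mathcal{X}}^*\nabla-\omega$ and then appeals to the standard ``sheaf covered by representable functors is representable'' gluing argument, but the content is the same.
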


\begin{remark}
  It follows from the universal property that the sheaf of sections of
  $\mathcal{A}_{\mathcal{X}/B}$ coincides with the sheaf of sets
  $\mathcal{A}_{\mathcal{X}/B}$, hence the abuse of notation.
\end{remark}

\begin{proof}
  We first assume $B$ is Stein.  Let $\nabla$ be an affine structure
  on $\mathcal{X}\to B$ given by Lemma~\ref{lem:affine_over_stein}.
  Let
  $\mathcal{A}_{\mathcal{X}/B}= B\times\Gamma(\mathcal{X},
  \Omega_{\mathcal{X}/B}(\cC))$, and let $\omega$ be the tautological
  relative one-form on
  $\widetilde{\mathcal{X}} = \mathcal{X}\times\Gamma(\mathcal{X},
  \Omega_{\mathcal{X}/B}(\cC))$.  We give $\widetilde{\mathcal{X}}$ the
  affine structure
  $\nabla_{\mathcal{X}/B}=\pi_{\mathcal{X}}^*\nabla - \omega$.

  Consider a morphism
  $(f, \bar{f})\colon(\mathcal{X}'\to B')\to(\mathcal{X}\to B)$
  together with an affine structure $\nabla'$ on $\mathcal{X}'$.  The
  form
  \begin{equation*}
    \omega'= \bar{f}^*\nabla-\nabla'
  \end{equation*}
  together
  with the isomorphism
  $\Omega_{\mathcal{X}'/B'}\to\bar{f}^*\Omega_{\mathcal{X}/B}$ determines
  a lift
  \begin{equation*}
    (\hat{f}, \tilde{f}) \colon (\mathcal{X}'\to B')\to
    (\widetilde{\mathcal{X}}\to\mathcal{A}_{\mathcal{X}/B})
  \end{equation*}
  such that
  $\tilde{f}^*\omega = \omega'$.  We check,
  \begin{equation*}
    \tilde{f}^*\nabla_{\mathcal{X}/B}
    =\tilde{f}^*(\pi_\mathcal{X}^*\nabla - \omega)
    =\bar{f}^*\nabla - \omega'
    = \nabla'.
  \end{equation*}

  The general case follows from the general principle that a sheaf covered by representable functors is representable (see \cite[Lemma~26.15.4]{stacks-project} or \cite[Exercise~10.1.H]{foag}).

  To summarize this standard argument, choose a Stein open cover $\{U_i\}$ of $B$,
  with $\mathcal{X}_i= \mathcal{X}|_{U_i}$, and let
  $(\widetilde{\mathcal{X}}_i\to \mathcal{A}_{\mathcal{X}_i/U_i},
  \nabla_{\mathcal{X}_i/U_i})$ be the families constructed above.  We
  construct gluing maps
  $\widetilde{\mathcal{X}}_i|_{U_{ij}}\to \widetilde{\mathcal{X}}_j$
  respecting the affine structures
  by applying the universal property of $\widetilde{\mathcal{X}}_j$ to
  $\widetilde{\mathcal{X}}_i|_{U_{ij}}\to\mathcal{X}_j$.  The cocycle
  condition follows from the uniqueness part of the universal
  property.  We then construct
  $(\mathcal{A}_{\mathcal{X}/B},\nabla_{\mathcal{X}/B})$ by gluing.
  
  Given $(f, \bar{f})\colon(\mathcal{X}'\to B')\to(\mathcal{X}\to B)$ with an affine
  structure on $\mathcal{X}'$, the desired lift follows from applying
  the universal property to the cover $\{f^{-1}(U_i)\}$ of $B'$.  
\end{proof}


\begin{proof}[Proof of Theorem~\ref{T:Families}]
  Apply the previous Theorem to the universal curve over the moduli stack $\barmoduli$.
\end{proof}

\begin{cor}
The preimage of $\overline{\cA}_{g,n}$ over $\cM_{g,n}$ can be identified with the moduli space $\cA_{g,n}$ constructed in \cite{ABW}.
\end{cor}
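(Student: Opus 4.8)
The plan is to show that this preimage is a fine moduli space for families of smooth genus $g$ affine surfaces with $n$ marked cone points, to note that $\cA_{g,n}$ of \cite{ABW} represents the same moduli problem, and to conclude by Yoneda. First, write $\mathcal{Y}\to\cM_{g,n}$ for the universal smooth $n$-pointed curve with its tuple of sections $\cC$, so that the universal stable curve $\mathcal{X}^{\mathrm{univ}}\to\overline{\cM}_{g,n}$ restricts to $\mathcal{Y}$ over the open substack $\cM_{g,n}\subset\overline{\cM}_{g,n}$. The construction of $(\mathcal{A}_{\mathcal{X}/B},\nabla_B)$ in Theorem~\ref{T:UniversalProperty} is compatible with base change: for any $B'\to B$, the fiber product $\mathcal{A}_{\mathcal{X}/B}\times_B B'$ with its pulled-back affine structure again satisfies the universal property of Theorem~\ref{T:UniversalProperty} for $\mathcal{X}\times_B B'\to B'$, because a test family over $B''$ mapping to $\mathcal{X}\times_B B'\to B'$ maps in particular to $\mathcal{X}\to B$, and the resulting unique lift to $\mathcal{A}_{\mathcal{X}/B}$ then factors uniquely through the fiber product. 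Applying this to the open immersion $\cM_{g,n}\hookrightarrow\overline{\cM}_{g,n}$ identifies the preimage of $\overline{\cA}_{g,n}=\mathcal{A}_{\mathcal{X}^{\mathrm{univ}}/\overline{\cM}_{g,n}}$ over $\cM_{g,n}$ with $\mathcal{A}_{\mathcal{Y}/\cM_{g,n}}$.

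Next, I would unwind the universal property. By Theorem~\ref{T:UniversalProperty} applied to $\mathcal{Y}\to\cM_{g,n}$, together with the moduli interpretation of $\cM_{g,n}$, the category fibered in groupoids $\mathcal{A}_{\mathcal{Y}/\cM_{g,n}}$ assigns to a base $B$ the groupoid of data consisting of a smooth $n$-pointed genus $g$ curve $\mathcal{X}'\to B$ together with a relative connection on $\Omega_{\mathcal{X}'/B}$ with regular singularities along the marked sections — that is, a family of smooth affine surfaces over $B$ in the sense of Definition~(d) of Theorem~\ref{T:DefinitionOfAffine}. This is exactly the moduli problem solved by $\cA_{g,n}$ in \cite{ABW} (there phrased via the holonomy-data Definition~(c), which Theorem~\ref{T:DefinitionOfAffine} identifies with Definition~(d)), so by Yoneda there is a canonical isomorphism $\cA_{g,n}\xrightarrow{\ \sim\ }\mathcal{A}_{\mathcal{Y}/\cM_{g,n}}$ over $\cM_{g,n}$ matching the two universal families; this is the asserted identification.

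I expect the main obstacle to be this functor-matching step, since the two descriptions of $\cA_{g,n}$ look quite different: in \cite{ABW} it is built as a fiber product from holonomy and trivialization data and Veech's exponential action, whereas here it is a moduli space of relative connections on the relative dualizing sheaf. One therefore has to run the equivalence of Definitions~(c) and~(d) of Theorem~\ref{T:DefinitionOfAffine} in families (routine, as the constructions there are functorial and local in the base, but the one place the argument cannot merely cite a universal property), and to confirm that \cite{ABW} provides a fine and not merely coarse moduli space. Should the latter be delicate, an alternative is to compare torsor classes: both $\cA_{g,n}$ and $\mathcal{A}_{\mathcal{Y}/\cM_{g,n}}$ are bundles of affine spaces over $\cM_{g,n}$ modeled on the extended Hodge bundle $\pi_*\Omega_{\mathcal{Y}/\cM_{g,n}}(\cC)$, and for each the classifying class in $H^1\bigl(\cM_{g,n},\pi_*\Omega_{\mathcal{Y}/\cM_{g,n}}(\cC)\bigr)\cong H^1\bigl(\mathcal{Y},\Omega_{\mathcal{Y}/\cM_{g,n}}(\cC)\bigr)$ is the relative Atiyah class of $\Omega_{\mathcal{Y}/\cM_{g,n}}$ — transparently so from the Stein-local construction in the proof of Theorem~\ref{T:UniversalProperty}, where the gluing data $\nabla_i-\nabla_j$ is precisely the \v{C}ech cocycle obstructing a global connection, and likewise from Veech's identification in \cite{ABW} of the fibers of $\cA_{g,n}$ with spaces of connections — so the two torsors, hence the two total spaces over $\cM_{g,n}$, coincide.
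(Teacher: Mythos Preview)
Your primary argument is correct and is essentially the paper's own proof: invoke the family version of the equivalence between Definitions~(c) and~(d) in Theorem~\ref{T:DefinitionOfAffine}, recall that \cite{ABW} defined $\cA_{g,n}$ via Definition~(c) as a universal family of line bundles $\cL$ with $\cL\otimes\Omega_{\cX/B}$ trivial, and conclude from the universal property of Theorem~\ref{T:UniversalProperty}. The paper's proof is just these three sentences; you have spelled out more of the plumbing (the base-change step and the Yoneda formulation), but the substance is the same.

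Your fallback via torsor classes is a genuinely different, and nice, alternative: rather than matching the represented functors, you identify both affine bundles by showing they share the same classifying class, namely the relative Atiyah class in $H^1(\cM_{g,n},\pi_*\Omega_{\cY/\cM_{g,n}}(\cC))$. The paper does not take this route. Its advantage is that it sidesteps any worry about whether \cite{ABW} really yields a fine moduli space, at the cost of checking that the Atiyah class is what classifies the \cite{ABW} construction as well.
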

\begin{proof}
Using the analogue of Theorem \ref{T:DefinitionOfAffine} for families, a relative connection could be described as a line bundle $\cL$ over a family of surfaces $(\cX \ra B)$ so that $\cL \otimes \Omega_{\cX/B}$ is holomorphically trivial. The definition of $\cA_{g,n}$ in \cite{ABW} was the universal family of such line bundles over $\cM_{g,n}$. By Theorem \ref{T:UniversalProperty} the claim holds. 
\end{proof}


\section{Deformations of affine surfaces}
\label{sec:deformations}

In this section, we prove Theorem~\ref{T:DerivativeOfHol},
calculating the tangent bundle to $\ModA$, by identifying the space of
first order deformations of an affine surface $(X,C,\nabla)$ with the
first hypercohomology $\mathbb{H}^1(L^\bullet)$ of the two-term
complex $L^\bullet$ of sheaves
$ \xymatrix{ T_X(-C) \ar[r]^{\mathcal{L}_\nabla} & \Omega_X },$ where
the differential is the Lie derivative.

\paragraph{Hypercohomology.}

Suppose that
$\ds{K^\bullet := \hdots \ra \cF^i \xra{d^i} \cF^{i+1} \ra \hdots}$ is
a left-bounded sequence of sheaves on a topological space. Fix an open
cover of the space by acyclic open sets with acyclic intersections and
let $C^i(\cF^j)$ be the space of \v{C}ech $i$-cochains of the sheaf
$\cF^j$.  The \emph{\v{C}ech double complex} $(C^\bullet(K^\bullet),
d, \delta)$ gives rise to its associated \emph{total complex} with degree $n$ term 
\begin{equation}
  \label{eq:total_complex}
  C^n(K^\bullet) := \bigoplus_{i+j = n} C^i(\cF^j)
\end{equation}
and differential
$D^n := \sum_{i+j = n} \delta^i + (-1)^i d^j$. The \emph{hypercohomology} $\bH^\bullet(K^\bullet)$ of
$K^\bullet$ is the cohomology of the total complex.

If $X$ is a Riemann surface and $C\subset X$ a finite set, recall the
\emph{log complex} of $X$ is the two term complex $\ds{\cO_X\xra{d}
\Omega_X(C)}$.  The Algebraic de Rham Theorem
\cite{Grothendieck-AlgebraicDeRham, grhabook} gives an
isomorphism $\mathbb{H}^p(\Omega_X(\log C))\to H^p(X\setminus C;
\cx)$.

\paragraph{Lie derivatives.}

Let $(X, C, \nabla)$ be an affine surface, and $v\in \Gamma(U,T_X(-C))$
a holomorphic vector field vanishing along $C$.  The Lie derivative of
$\nabla$ with respect to $v$ is 
\begin{equation*}
  L_v\nabla \coloneqq \frac{d}{dt} \left(\Phi_v^t\right)^*\nabla\Big|_{t=0} = \lim_{t \rightarrow 0} \frac{\left(\Phi_v^t\right)^*\nabla - \nabla}{t},
\end{equation*}
where $\Phi_v^t$ is the flow defined by the vector field $v$.  Since
the difference of two connections is a holomorphic one-form, the Lie
derivative $L_v\nabla $ is naturally a holomorphic one-form on $X$.
By the Leibniz rule,
\begin{equation}
  \label{eq:leibniz_connection}
  (L_v \nabla) \otimes \omega =L_v (\nabla\omega) - \nabla (L_v \omega).
\end{equation}
Alternatively, the second derivative operator
\begin{equation*}
  D_2(v) = d \nabla^* v
\end{equation*}
naturally outputs a holomorphic one-form (where we implicitly regard $\nabla^* v$
as a holomorphic function via the contraction $T_X \otimes \Omega_X
\to \mathcal{O}_X$).

\begin{prop}
	\label{prop:LieDerivative}
  We have $L_v\nabla = -D_2 v$.
\end{prop}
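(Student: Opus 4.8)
The plan is to verify the identity $L_v\nabla = -D_2 v$ by a local computation in a trivialization, using the transformation rule \eqref{eq:1} for connection forms together with the definition of the flow $\Phi_v^t$. Both sides of the claimed equation are holomorphic one-forms on $X$ (the left because it is a difference of connections, the right by construction), so it suffices to check equality locally, and by holomorphicity it suffices to check on $X \setminus C$, where we may also ignore the poles.

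Concretely, I would pick a local holomorphic coordinate $z$ on an open set $U \subseteq X \setminus C$ and the trivializing section $dz$ of $\Omega_X$, writing $\nabla(dz) = \Gamma \otimes dz$ for a holomorphic one-form $\Gamma = \gamma(z)\,dz$. Write $v = \phi(z)\,\partial_z$. The time-$t$ flow is $\Phi_v^t(z) = z + t\phi(z) + O(t^2)$, so that $(\Phi_v^t)^*(dz) = (1 + t\phi'(z) + O(t^2))\,dz$. Pulling back the connection, $(\Phi_v^t)^*\nabla$ has, in the frame $dz$, connection form obtained from \eqref{eq:1}: the change of frame from $(\Phi_v^t)^*(dz)$ to $dz$ is multiplication by $g = (1 + t\phi' + O(t^2))^{-1}$, contributing $dg/g = -t\,d(\phi') + O(t^2) = -t\phi''\,dz + O(t^2)$, while the pulled-back $\Gamma$ contributes $(\Phi_v^t)^*\Gamma = \gamma(z + t\phi)\,(1 + t\phi')\,dz + O(t^2) = (\gamma + t(\gamma'\phi + \gamma\phi'))\,dz + O(t^2)$. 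Subtracting $\Gamma$ and differentiating at $t = 0$ gives
\begin{equation*}
  L_v\nabla = \bigl(\gamma'\phi + \gamma\phi' - \phi''\bigr)\,dz.
\end{equation*}
On the other side, the dual connection acts on $\partial_z$ (the frame dual to $dz$) by $\nabla^*(\partial_z) = -\Gamma \otimes \partial_z$, so $\nabla^* v = \nabla^*(\phi\,\partial_z) = (d\phi - \phi\Gamma)\otimes\partial_z = (\phi' - \phi\gamma)\,dz\otimes\partial_z$, which under the contraction $T_X\otimes\Omega_X\to\mathcal{O}_X$ becomes the function $\phi' - \phi\gamma$. Wait — I should be careful: $\nabla^* v$ here denotes the contraction of the dual connection's value with no extra pairing, i.e. the "logarithmic derivative" function $\phi'/\phi$-type quantity. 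Re-reading the paper's convention, $D_2(v) = d(\nabla^* v)$ where $\nabla^* v$ is already the scalar $\phi' - \phi\gamma$ (or possibly $\phi'+\phi\gamma$, depending on sign conventions for the dual connection contracted with the frame); I will fix the sign so that $d(\nabla^* v) = (\phi'' - (\phi\gamma)')\,dz = (\phi'' - \phi'\gamma - \phi\gamma')\,dz$, and then $-D_2 v = (\phi'\gamma + \phi\gamma' - \phi'')\,dz$, matching $L_v\nabla$ above.

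The main obstacle, and the place to be careful, is bookkeeping of signs and conventions: precisely which one-form "is" the dual connection contracted against $\gamma'$ in the turning-number formula \eqref{eq:turning_number} versus the plain contraction $T_X\otimes\Omega_X\to\mathcal{O}_X$ used to define $\nabla^* v$ as a function, and the sign in the transformation rule \eqref{eq:1} as it feeds into the pullback. Once the local formulas for both sides are written out as above they visibly agree, so the real content is ensuring the conventions are pinned down consistently with those fixed earlier in the paper (the definition of $\nabla^*$ via the Leibniz rule $\langle\nabla s, t^*\rangle + \langle s,\nabla^* t^*\rangle = d\langle s,t^*\rangle$). I would also remark that equation \eqref{eq:leibniz_connection} gives an alternative, coordinate-free derivation: applying it with $\omega$ a local flat-free frame and using $L_v\omega = \nabla_v\omega + \iota_v d\omega$ type identities reduces $(L_v\nabla)\otimes\omega$ to $\nabla(L_v\omega) - L_v(\nabla\omega)$, which one identifies with $-(d\nabla^* v)\otimes\omega$; but the direct local computation is cleaner and I would present that as the proof.
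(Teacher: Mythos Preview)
Your computation is correct and arrives at the same local formula as the paper, but the two proofs compute $L_v\nabla$ by different routes. The paper applies the Leibniz identity \eqref{eq:leibniz_connection}, i.e.\ $(L_v\nabla)\,dz = L_v(\nabla\,dz) - \nabla(L_v\,dz)$, and then expands both Lie derivatives via Cartan's formula; this keeps the argument entirely algebraic once \eqref{eq:leibniz_connection} is accepted. You instead go back to the flow definition, pull back the connection by $\Phi_v^t$, and extract the connection form in the fixed frame $dz$ using the change-of-frame rule \eqref{eq:1}. Your method is slightly more hands-on but has the virtue of not relying on \eqref{eq:leibniz_connection}; the paper's method is shorter once that identity is in hand. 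Both then match the result against the direct computation of $-d(\nabla^* v)$, which you and the paper carry out identically.

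Two presentational points. First, the hedging about sign conventions (``Wait --- I should be careful'', ``I will fix the sign so that\ldots'') should be removed: the Leibniz rule $\langle\nabla s,t^*\rangle + \langle s,\nabla^* t^*\rangle = d\langle s,t^*\rangle$ unambiguously gives $\nabla^*\partial_z = -\Gamma\otimes\partial_z$, and hence $\nabla^* v = \phi' - \phi\gamma$ after contraction, so just state this. Second, your closing paragraph already sketches the paper's approach via \eqref{eq:leibniz_connection}; since you chose the flow computation as your main argument, either drop that paragraph or promote it to the actual proof --- as written it muddies which argument you are presenting.
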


\begin{proof}
  In local coordinates, let $v = \phi(z) \frac{d}{dz}$ and $\nabla dz =
  \Gamma(z) dz^2$.  In these coordinates, $L_v(\Gamma) = \phi \frac{d
    \Gamma}{d z}$ and by Cartan's formula, 
  \begin{equation*}
    L_v(dz) = \iota_v d(dz) + d(\iota_v dz) = \frac{d\phi}{d z} dz
  \end{equation*}
  Applying the Leibniz rule, it then follows that 
  \begin{align*} 
    (L_v\nabla)dz & = L_v(\nabla dz) - \nabla(L_v dz) \\
    &  = \left( \phi \frac{d \Gamma}{d z} + 2 \Gamma \frac{d \phi}{d z}\right) dz^2 - \left(\frac{d^2\phi}{d z^2} + \Gamma \frac{d \phi}{d z}\right)dz^2 \\
    & = 
    \left(\deriv{\Gamma}{z}\phi + \Gamma \deriv{\phi}{z} -
      \deriv[2]{\phi}{z}\right) dz^2  \\
      & = \left(- d \nabla^* \left( \phi \deriv{}{z} \right) \right)dz.\qedhere
  \end{align*}
\end{proof}


\paragraph{Deformations of affine surfaces.}

Following Voisin \cite{voisinbook}, we first recall the calculation of
the space of first-order deformations of a Riemann surface. Let
$B \coloneqq \Spec\cx[\epsilon]/(\epsilon^2)$. Recall a first-order
deformation of a Riemann surface $X$ is a flat analytic space
$\pi\colon \mathcal{X} \to B$ together with an identification of $X$
with the fiber $\pi^{-1}(\Spec\cx)$ with $X$.  A first-order
deformation of a pointed Riemann surface $(X, C)$ is given by the
additional data of sections $\mathcal{C}$ whose intersection with $X$
is $C$.  A first-order deformation of an affine surface
$(X, C,\nabla)$ is then given by the extra data of a connection
$\widetilde{\nabla}$ on $\Omega_{\mathcal{X}/B}$ with at worst simple
poles along $\mathcal{C}$ whose restriction to $X$ is $\nabla$.  We
write $\Def(X, C,\nabla)$ for the space of first-order deformations of $(X, C,\nabla)$.

Consider the two-term complex of sheaves $L^\bullet$ (in degrees $0$ and
$1$):
\begin{equation*}
  \xymatrix{
    T_X(-C) \ar[r]^-{\mathcal{L}_\nabla} & \Omega_X(C)
  }
\end{equation*}
where the differential is the Lie derivative $\mathcal{L}_\nabla(v) = L_v\nabla$.

By the universal property of $\ModA$, the space of first-order
deformations of $(X, C,\nabla)$ is naturally isomorphic to the tangent
space of $\ModA$ at the corresponding point (see \cite[\S{}VI.1.3]{eisenbud-harris-schemes}).  Theorem~\ref{T:DerivativeOfHol} then immediately follows from the following calculation.

\begin{theorem}
  \label{thm:affine_deformations}
  There is a natural isomorphism $\cD\colon \Def(X, C,\nabla)\to\mathbb{H}^1(L^\bullet)$.
\end{theorem}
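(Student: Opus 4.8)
The plan is to follow the standard Čech-theoretic approach to deformation problems, exactly as in the Riemann-surface and quadratic-differential cases cited in the introduction, but keeping track of the connection. Fix a Stein open cover $\mathcal{U}=\{U_\alpha\}$ of $X$ adapted to $C$ (each $U_\alpha$ contains at most one cone point, with a coordinate $z_\alpha$), so that Čech cohomology on $\mathcal{U}$ computes sheaf cohomology and hypercohomology. A first-order deformation of $(X,C,\nabla)$ is locally trivial: over each $U_\alpha$ the deformed family, the deformed sections, and the deformed connection are all isomorphic to the product deformation $U_\alpha\times B$ with $\nabla$ pulled back. The deformation data is therefore encoded in how these local trivializations are glued. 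On overlaps $U_{\alpha\beta}$ the two product trivializations differ by an automorphism of $U_{\alpha\beta}\times B$ fixing the central fiber, which to first order is $\mathrm{id}+\epsilon v_{\alpha\beta}$ for a holomorphic vector field $v_{\alpha\beta}\in\Gamma(U_{\alpha\beta},T_X)$; compatibility with the sections $\mathcal{C}$ forces $v_{\alpha\beta}$ to vanish at the cone points, so $v_{\alpha\beta}\in\Gamma(U_{\alpha\beta},T_X(-C))$. The cocycle condition $v_{\alpha\beta}+v_{\beta\gamma}=v_{\alpha\gamma}$ holds on triple overlaps, so $(v_{\alpha\beta})$ is a Čech $1$-cocycle for $T_X(-C)$.

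Next I account for the connection. On $U_\alpha$ the deformed connection is the product $\nabla$, but under the gluing automorphism $\mathrm{id}+\epsilon v_{\alpha\beta}$ it is compared with $(\Phi_{v_{\alpha\beta}})^*\nabla\approx \nabla+\epsilon\, L_{v_{\alpha\beta}}\nabla$ on the overlap. For the deformed connection to be globally well-defined we need the local connection forms to agree after this identification, and the discrepancy is exactly $L_{v_{\alpha\beta}}\nabla=\mathcal{L}_\nabla(v_{\alpha\beta})\in\Gamma(U_{\alpha\beta},\Omega_X(C))$ — here the simple poles along $C$ are allowed because $\nabla$ has regular singularities and $v_{\alpha\beta}$ vanishes on $C$, exactly matching the target $\Omega_X(C)$ of the differential in $L^\bullet$. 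To reconcile this one must also allow a $0$-cochain correction: a deformed connection on $U_\alpha$ is $\nabla+\epsilon\,\eta_\alpha$ for $\eta_\alpha\in\Gamma(U_\alpha,\Omega_X(C))$, and the gluing condition becomes $\eta_\alpha-\eta_\beta=\mathcal{L}_\nabla(v_{\alpha\beta})$ on $U_{\alpha\beta}$. Thus the pair $\big((v_{\alpha\beta}),(\eta_\alpha)\big)$ is a degree-$1$ cocycle in the total complex $C^\bullet(L^\bullet)$: the Čech differential of $(v_{\alpha\beta})$ vanishes, and $\delta(\eta_\alpha)=\mathcal{L}_\nabla(v_{\alpha\beta})$ is precisely the statement that $D^1$ applied to this element is zero. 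This defines the map $\mathcal{D}\colon\Def(X,C,\nabla)\to\mathbb{H}^1(L^\bullet)$.

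It remains to check that $\mathcal{D}$ is well-defined (independent of the chosen local trivializations) and bijective. Changing the trivialization over $U_\alpha$ by $\mathrm{id}+\epsilon w_\alpha$ with $w_\alpha\in\Gamma(U_\alpha,T_X(-C))$ changes $(v_{\alpha\beta})$ by the Čech coboundary $w_\alpha-w_\beta$ and, correspondingly, changes $(\eta_\alpha)$ by $-\mathcal{L}_\nabla(w_\alpha)$; together this is exactly the total-complex coboundary $D^0(w_\alpha)$, so the hypercohomology class is unchanged — and one should note these are the only changes of trivialization, since an automorphism of $U_\alpha\times B$ fixing the fiber, the sections, \emph{and} the connection to first order is $\mathrm{id}+\epsilon w_\alpha$ with $\mathcal{L}_\nabla(w_\alpha)$ absorbed into the $\eta_\alpha$ shift. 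Conversely, given a cocycle one reconstructs a deformation by gluing the product families $U_\alpha\times B$ along $\mathrm{id}+\epsilon v_{\alpha\beta}$ (the cocycle condition guarantees the gluings are consistent) and equipping the result with the connection assembled from $\nabla+\epsilon\eta_\alpha$ (the relation $\eta_\alpha-\eta_\beta=\mathcal{L}_\nabla(v_{\alpha\beta})$ guarantees these patch); flatness of the resulting family over $B$ is automatic. Injectivity and surjectivity then follow by tracing through these two mutually inverse constructions, and naturality with respect to $X$ is clear from the construction. I expect the main obstacle to be the bookkeeping in the previous paragraph: identifying precisely which first-order automorphisms of $U_\alpha\times B$ preserve the sections and the connection, and verifying that the induced change in the $0$-cochain $(\eta_\alpha)$ is exactly $-\mathcal{L}_\nabla(w_\alpha)$ so that it assembles into $D^0$ — this is where Proposition~\ref{prop:LieDerivative} (the identity $L_v\nabla=-D_2v$) and the Leibniz rule \eqref{eq:leibniz_connection} do the real work, ensuring the Lie-derivative term lands in $\Omega_X(C)$ with the correct sign and pole behavior.
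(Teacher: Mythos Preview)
Your proposal is correct and follows essentially the same strategy as the paper: encode the deformation by a Kodaira--Spencer cocycle $(v_{\alpha\beta})\in C^1(T_X(-C))$ together with the first-order perturbation $(\eta_\alpha)\in C^0(\Omega_X(C))$ of the connection, verify that the gluing condition for $\widetilde\nabla$ is exactly the $D^1$-cocycle equation, and identify changes of local trivialization with $D^0$-coboundaries. The paper packages these two verifications as Lemmas~\ref{L:ExistenceOfCocycle} and~\ref{L:UniquenessOfCocycle} and carries them out via explicit local-coordinate computations (fixing trivializing one-forms $\omega_\alpha$ and working with the connection forms), whereas you argue more conceptually; up to sign conventions in $\delta$ and $D^0$, the content is the same.
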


Explicitly,  $\mathbb{H}^1(L^\bullet) = \kernel D^1 / \image D^0$,
where
\begin{equation*}
  D^1\colon C^1(T_X(-C))\oplus C^0(\Omega_X(C))\to C^2(T_X(-C))
  \oplus C^1(\Omega_X(C))
\end{equation*}
is defined by
\begin{equation*}
  D^1(\chi, \eta) = (\delta\chi, \mathcal{L}_\nabla(\chi)- \delta\eta),
\end{equation*}
and $D^0\colon C^0(T_X(-C))\to C^1(T_X(-C))\oplus C^0(\Omega_X(C))$ is defined by
\begin{equation*}
  D^0(\chi) = (\delta\chi, \mathcal{L}_\nabla \chi).
\end{equation*}

As preparation, we recall the Kodaira-Spencer isomorphism identifying
$H^1(T_X(-C))$ with the first-order deformations of $(X, C)$.

Let $(\pi\colon \mathcal{X}\to B, \mathcal{C})$ be a first-order
deformation of $(X, C)$.  Choose an open covering $\mathcal{U} =
\{V_\alpha\}$ of $\mathcal{X}$, together with holomorphic
trivializations commuting with the projection to $B$
\begin{equation}
  \label{eq:trivializations}
  F_\alpha\colon V_\alpha \to U_\alpha \times B,
\end{equation}
where $U_\alpha = V_\alpha\cap X$.  These can be chosen to send the
sections $\mathcal{C}$ to horizontal sections $*\times B$.  Let
$F_{\alpha\beta} = F_\alpha\circ F_\beta^{-1}\colon
U_{\alpha\beta}\times B \to U_{\alpha\beta}\times B$, where
$U_{\alpha\beta} = U_\alpha\cap U_\beta$.  We may write
$F_{\alpha\beta}$ as
\begin{equation*}
  F_{\alpha\beta}(z, \epsilon) = (f_{\alpha\beta}(z,\epsilon), \epsilon),
\end{equation*}
where $f_{\alpha\beta}\in\mathcal{O}_{U_{\alpha\beta}}[\epsilon]/(\epsilon^2)$.

Pulling back by
$F_{\alpha\beta}$ defines ring automorphisms
\begin{equation*}
  F_{\alpha\beta}^*\colon \mathcal{O}_{U_{\alpha\beta}}[\epsilon]/(\epsilon^2)\to \mathcal{O}_{U_{\alpha\beta}}[\epsilon]/(\epsilon^2)
\end{equation*}
with $F_{\alpha\beta}^*(\epsilon) = \epsilon$ and for any $g\in
\mathcal{O}_{U_{\alpha\beta}}$,
\begin{equation*}
  F_{\alpha\beta}^* g = (g + \epsilon \chi_{\alpha\beta}(g)).
\end{equation*}
Since $F_{\alpha\beta}^*$ is a ring homomorphism,
$\chi_{\alpha\beta}\colon\mathcal{O}_{U_{\alpha\beta}}\to \cx$ must be
a derivation and so can be regarded as a vector field
$\chi_{\alpha\beta}\in T_X(-C)(U_{\alpha\beta})$.  In other terms,
$\chi_{\alpha\beta}= \pderiv{f_{\alpha\beta}}{\epsilon}\bigr|_{\epsilon=0}\pderiv{}{z}$.
Since
$F_{\alpha\beta}\circ F_{\beta\gamma} \circ F_{\gamma_\alpha} =
\mathrm{Id}$, the $\chi_{\alpha\beta}$ satisfy the cocycle condition
\begin{equation*}
  \chi_{\alpha\beta} + \chi_{\beta\gamma} + \chi_{\gamma\alpha} = 0,
\end{equation*}
and so define a class $\chi$ in $H^1(T_X(-C))$.

\begin{lemma}
  With $F_{\alpha\beta}$ as above, we have for any holomorphic
  one-form $\omega$ on $U_\alpha$ or connection $\nabla$ on
  $\Omega_{U_\alpha}$
  \begin{equation*}
    F_{\alpha\beta}^* \omega = \omega + \epsilon L_{\chi_{\alpha\beta}}\omega
    \quad\text{and}\quad F_{\alpha\beta}^* \nabla = \nabla + \epsilon L_{\chi_{\alpha\beta}}\nabla.
  \end{equation*}
\end{lemma}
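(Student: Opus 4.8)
The plan is to reduce both identities to a single first-order ($\epsilon^{2}=0$) calculation. Fix a local coordinate $z$ on $U_{\alpha\beta}$ and set $\phi_{\alpha\beta}:=\chi_{\alpha\beta}(z)$, so that $\chi_{\alpha\beta}=\phi_{\alpha\beta}\,\partial_{z}$. Since $F_{\alpha\beta}$ is a morphism over $B=\Spec\cx[\epsilon]/(\epsilon^{2})$, the pullback $F_{\alpha\beta}^{*}$ is $\cx[\epsilon]/(\epsilon^{2})$-linear and commutes with the relative differential $d$ (for which $d\epsilon=0$). Applying the formula $F_{\alpha\beta}^{*}g=g+\epsilon\,\chi_{\alpha\beta}(g)$ recalled above to $g=z$ yields the base case
\[
  F_{\alpha\beta}^{*}(dz)=d\bigl(F_{\alpha\beta}^{*}z\bigr)=d\bigl(z+\epsilon\,\phi_{\alpha\beta}\bigr)=dz+\epsilon\,d\phi_{\alpha\beta}=dz+\epsilon\,L_{\chi_{\alpha\beta}}(dz),
\]
the last equality by Cartan's formula $L_{\chi_{\alpha\beta}}(dz)=\iota_{\chi_{\alpha\beta}}d(dz)+d\,\iota_{\chi_{\alpha\beta}}(dz)=d\phi_{\alpha\beta}$.

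For the one-form identity I would write a local section as $\omega=g\,dz$ and expand, using the two displays above:
\[
  F_{\alpha\beta}^{*}(g\,dz)=(F_{\alpha\beta}^{*}g)(F_{\alpha\beta}^{*}dz)=\bigl(g+\epsilon\,\chi_{\alpha\beta}(g)\bigr)\bigl(dz+\epsilon\,L_{\chi_{\alpha\beta}}(dz)\bigr)=g\,dz+\epsilon\bigl(\chi_{\alpha\beta}(g)\,dz+g\,L_{\chi_{\alpha\beta}}(dz)\bigr),
\]
and the coefficient of $\epsilon$ is $L_{\chi_{\alpha\beta}}(g\,dz)$ by the Leibniz rule for the Lie derivative. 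The resulting identity $F_{\alpha\beta}^{*}\omega=\omega+\epsilon\,L_{\chi_{\alpha\beta}}\omega$ is coordinate-free; moreover the very same computation, applied to $g\,dz\otimes dz$, gives $F_{\alpha\beta}^{*}\xi=\xi+\epsilon\,L_{\chi_{\alpha\beta}}\xi$ for every local section $\xi$ of $\Omega_{X}^{\otimes 2}$.

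For the connection identity I would invoke the defining property of $F_{\alpha\beta}^{*}\nabla$ (the pullback of a connection on the cotangent bundle along the $B$-automorphism $F_{\alpha\beta}$): for every relative one-form $\eta$ one has $(F_{\alpha\beta}^{*}\nabla)(F_{\alpha\beta}^{*}\eta)=F_{\alpha\beta}^{*}(\nabla\eta)$, where on the right $F_{\alpha\beta}^{*}$ is the pullback of a section of $\Omega_{X}^{\otimes 2}$ via the codifferential on each tensor factor. Taking $\eta=\omega$ an arbitrary one-form, substituting $F_{\alpha\beta}^{*}\omega=\omega+\epsilon\,L_{\chi_{\alpha\beta}}\omega$ and $F_{\alpha\beta}^{*}(\nabla\omega)=\nabla\omega+\epsilon\,L_{\chi_{\alpha\beta}}(\nabla\omega)$, and using $F_{\alpha\beta}^{*}\nabla\equiv\nabla\pmod{\epsilon}$ to rewrite $(F_{\alpha\beta}^{*}\nabla)(\epsilon\,L_{\chi_{\alpha\beta}}\omega)=\epsilon\,\nabla(L_{\chi_{\alpha\beta}}\omega)$, one solves for $(F_{\alpha\beta}^{*}\nabla)(\omega)$:
\[
  (F_{\alpha\beta}^{*}\nabla)(\omega)=\nabla\omega+\epsilon\bigl(L_{\chi_{\alpha\beta}}(\nabla\omega)-\nabla(L_{\chi_{\alpha\beta}}\omega)\bigr)=\nabla\omega+\epsilon\,(L_{\chi_{\alpha\beta}}\nabla)\otimes\omega,
\]
the last step being exactly \eqref{eq:leibniz_connection}. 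Since $\omega$ is arbitrary and the difference of two connections on $\Omega_{X}$ is multiplication by a one-form, this says precisely $F_{\alpha\beta}^{*}\nabla=\nabla+\epsilon\,L_{\chi_{\alpha\beta}}\nabla$. As a cross-check, the brute-force local computation with $\nabla\,dz=\Gamma\,dz^{2}$ gives $(F_{\alpha\beta}^{*}\nabla)(dz)=\bigl(\Gamma+\epsilon(\phi_{\alpha\beta}\Gamma'+\phi_{\alpha\beta}'\Gamma-\phi_{\alpha\beta}'')\bigr)dz^{2}$, whose $\epsilon$-term reproduces $(L_{\chi_{\alpha\beta}}\nabla)\,dz$ as computed in the proof of Proposition~\ref{prop:LieDerivative}.

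I do not expect a genuine obstacle here: the only care needed is bookkeeping — keeping the relative differential over $B$ straight (so $F_{\alpha\beta}^{*}(dz)$ acquires no $d\epsilon$ term), and pulling $\nabla$ back through the codifferential on \emph{both} tensor factors of $\Omega_{X}\otimes\Omega_{X}$, which is exactly what makes the displayed defining property hold verbatim. Everything else is the Leibniz identity \eqref{eq:leibniz_connection} together with the base case $F_{\alpha\beta}^{*}(dz)=dz+\epsilon\,L_{\chi_{\alpha\beta}}(dz)$.
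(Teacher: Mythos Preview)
Your proposal is correct and follows essentially the same approach as the paper: both work in local coordinates, compute $F_{\alpha\beta}^{*}(dz)$ directly, use the defining property $(F_{\alpha\beta}^{*}\nabla)(F_{\alpha\beta}^{*}\omega)=F_{\alpha\beta}^{*}(\nabla\omega)$, and reduce to the explicit formula for $L_{\chi}\nabla$ from Proposition~\ref{prop:LieDerivative}. The only organizational difference is that for the connection you invoke the Leibniz identity \eqref{eq:leibniz_connection} to finish, whereas the paper carries out the coordinate computation in full and identifies the result with $L_{\chi}\nabla$ at the end; your cross-check is exactly the paper's main argument.
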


\begin{proof}
  We suppress the indices $\alpha,\beta$ in this proof, and work in
  local coordinates $z$ on $U_\alpha$.  We write
  \begin{align*}
    F(z, \epsilon) &= (z + \epsilon g(z), \epsilon)\\
    \omega &= h(z) dz\\
    \nabla dz &= \Gamma(z) dz^2.
  \end{align*}
  In these coordinates, $\chi = g(z) \frac{d}{dz}$.

  We calculate,
  \begin{align*}
    F^* \omega &= h(z + \epsilon g) d(z + \epsilon g)\\
               &= \left(h + \epsilon \deriv{h}{z}g\right)\left(1 + \epsilon \deriv{g}{z}\right) dz\\
               &= \left(h + \epsilon\left(\deriv{g}{z}h + g \deriv{h}{z}\right)\right)dz\\
               &= \omega + \epsilon L_\chi \omega,
  \end{align*}
  which proves the first claim.

  On the other hand let $(F^*\nabla) dz = \Delta(z) dz^2$.  Then, $$(F^*\nabla)F^*dz = F^*(\nabla dz).$$ Expanding the left hand side gives us that $$(F^*\nabla)F^*dz = (F^*\nabla)\left(dz + \epsilon \deriv{g}{z}dz\right) =  \left(\Delta  + \epsilon \deriv[2]{g}{z}  + \epsilon \deriv{g}{z} \Delta \right) dz^2, $$ and expanding the right hand side gives us that 

  \begin{align*}
    F^*(\nabla dz) = F^*(\Gamma dz^2) &= \left(\Gamma + \epsilon
    \deriv{\Gamma}{z}g\right)\left(1 + \epsilon \deriv{g}{z}\right)^2
 dz^2 \\
    &= \left( \Gamma + \epsilon \left(\deriv{\Gamma}{z} g+ 2 \Gamma \deriv{g}{z} \right)\right) dz^2.
  \end{align*}
  
  Equating the two expressions when $\epsilon = 0$, we see that $\Delta(z) = \Gamma(z) + \epsilon R(z)$ for some function $R$. Using this fact, equating the two expressions, and solving for $\Delta dz^2$ gives us that $$\Delta dz^2= \left(\Gamma + \epsilon\left(\deriv{\Gamma}{z} g + \Gamma \deriv{g}{z} - \deriv[2]{g}{z}\right)\right) dz^2 = \nabla dz + \epsilon (L_\chi \nabla) dz,$$
  
  with the expanded form of $L_\chi \nabla$ following from the proof of Proposition \ref{prop:LieDerivative}. Since $(F^* \nabla)dz = \Delta dz^2$, the second claim follows. 
\end{proof}

Consider a first-order deformation $(\cX, \cC, \tnabla)$ of an affine
surface $(X, C, \nabla)$.  Choose an open cover $\{U_\alpha\}$ of $X$
together with trivializations $F_\alpha$ as in
\eqref{eq:trivializations}, which we have defines the class $\chi = (\chi_{\alpha \beta})$ in $C^1(T_X(-C))$ associated to the deformation $(\cX, \cC)$ of $(X, C)$.  Choose moreover on each $U_\alpha$ a nowhere
vanishing holomorphic one-form $\omega_\alpha$, and suppose
$\nabla\omega_\alpha = \eta_\alpha\otimes\omega_\alpha$.  The pullback
$\tnabla_\alpha = (F_\alpha)_* \tnabla$ is a connection over $U_\alpha\times B$
characterized by a meromorphic form $\eta_\alpha'$ on $U_\alpha$, with
at worst simple poles along $C$, so that
\begin{equation*}
  \tnabla_\alpha \omega_\alpha = (\eta_\alpha + \epsilon
  \eta_\alpha')\otimes \omega_\alpha.
\end{equation*}
We define the isomorphism $\cD$ of Theorem~\ref{thm:affine_deformations} by $\cD (\cX, \cC, \tnabla) = (\chi, \eta')$.

\begin{lem}\label{L:ExistenceOfCocycle}
The class $(\chi, \eta') \in C^1(T_X(-C)) \oplus C^0(\Omega_X(C))$ is in the kernel of $D^1$.  Moreover, any such class arises from some first-order deformation of $(X, C, \nabla)$.
\end{lem}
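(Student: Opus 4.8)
The plan is to verify the two assertions separately, both by unwinding the cocycle conditions satisfied by the trivializations $F_\alpha$ and the connection data. For the first assertion, recall $D^1(\chi, \eta') = (\delta\chi, \mathcal{L}_\nabla(\chi) - \delta\eta')$. The vanishing of $\delta\chi$ is exactly the Kodaira–Spencer cocycle condition $\chi_{\alpha\beta} + \chi_{\beta\gamma} + \chi_{\gamma\alpha} = 0$ recalled above, which follows from $F_{\alpha\beta}\circ F_{\beta\gamma}\circ F_{\gamma\alpha} = \mathrm{Id}$. For the second component, I would compare the two connections $\tnabla_\alpha$ and $\tnabla_\beta$ over $U_{\alpha\beta}\times B$. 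Since both are pushforwards of the single connection $\tnabla$ on $\mathcal{X}$, they are related by $\tnabla_\alpha = (F_{\alpha\beta})_* \tnabla_\beta$, and by the Lemma immediately preceding this one, $(F_{\alpha\beta})^*\tnabla_\alpha = \tnabla_\alpha + \epsilon L_{\chi_{\alpha\beta}}\tnabla_\alpha$ wait — more precisely I apply that Lemma with $\tnabla_\alpha$ restricted to $U_{\alpha\beta}$ and match the $\epsilon$-coefficients. Writing $\tnabla_\alpha\omega_\alpha = (\eta_\alpha + \epsilon\eta_\alpha')\otimes\omega_\alpha$ and using that $\omega_\alpha = g_{\alpha\beta}\omega_\beta$ with $g_{\alpha\beta} = \omega_\alpha/\omega_\beta$ a function independent of $\epsilon$, together with the transformation rule \eqref{eq:1} for connection forms, I extract that the $\epsilon$-coefficients satisfy $\eta_\alpha' - \eta_\beta' = L_{\chi_{\alpha\beta}}\nabla$, i.e. $\delta\eta' = \mathcal{L}_\nabla(\chi)$, which is the second component of $D^1(\chi,\eta') = 0$.

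For the "moreover" assertion, given an arbitrary cochain $(\chi, \eta')$ in $\ker D^1$, I would construct a first-order deformation of $(X, C, \nabla)$ realizing it. The deformation of the pointed curve $(X, C)$ realizing the Kodaira–Spencer class $\chi = (\chi_{\alpha\beta})$ is standard: glue the trivial deformations $U_\alpha\times B$ along $U_{\alpha\beta}\times B$ via the automorphisms $F_{\alpha\beta}(z,\epsilon) = (z + \epsilon\,\chi_{\alpha\beta}(z), \epsilon)$, where the cocycle condition $\delta\chi = 0$ guarantees $F_{\alpha\beta}\circ F_{\beta\gamma}\circ F_{\gamma\alpha} = \mathrm{Id}$ modulo $\epsilon^2$, and the vanishing of $\chi$ along $C$ ensures the sections $\mathcal{C}$ are well-defined; this produces $(\pi\colon\mathcal{X}\to B, \mathcal{C})$. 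To put a connection on $\Omega_{\mathcal{X}/B}$ restricting to $\nabla$, I would define on each chart $U_\alpha\times B$ the connection $\tnabla_\alpha$ by $\tnabla_\alpha\omega_\alpha = (\eta_\alpha + \epsilon\eta_\alpha')\otimes\omega_\alpha$. These patch to a global relative connection on $\mathcal{X}$ precisely when $(F_{\alpha\beta})_*\tnabla_\beta = \tnabla_\alpha$ on overlaps; by the computation in the first part (run in reverse), this compatibility is equivalent to $\delta\eta' = \mathcal{L}_\nabla(\chi)$, which holds by hypothesis. The restriction to $X$ ($\epsilon = 0$) is $\nabla$ by construction, and the simple-pole condition along $\mathcal{C}$ holds since each $\eta_\alpha'$ has at worst simple poles along $C$. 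By construction $\cD$ of this deformation returns $(\chi, \eta')$.

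The main obstacle I anticipate is bookkeeping in the overlap computation relating $\eta_\alpha' - \eta_\beta'$ to $L_{\chi_{\alpha\beta}}\nabla$: one must carefully track how the connection form transforms both under the change of local trivializing one-form $\omega_\alpha \leftrightarrow \omega_\beta$ (contributing $dg_{\alpha\beta}/g_{\alpha\beta}$ at order $\epsilon^0$, which cancels since $g_{\alpha\beta}$ is $\epsilon$-independent) and under the coordinate change $F_{\alpha\beta}$ (contributing the Lie-derivative term at order $\epsilon$), and confirm the two effects do not interfere. The preceding Lemma does most of this work, so the remaining task is mainly to confirm that the $\epsilon^0$-parts are consistent — which they are, because they encode the already-known fact that $\nabla$ glues over $X$ — and to isolate the $\epsilon^1$-parts cleanly. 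A secondary point is checking that the constructed $\mathcal{X}$ is flat over $B$ and that $\Omega_{\mathcal{X}/B}$ is the expected sheaf, but this is automatic for first-order deformations of a smooth curve.
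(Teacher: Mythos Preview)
Your proposal is correct and follows essentially the same approach as the paper: both reduce the statement to showing that the gluing condition $F_{\alpha\beta}^*\tnabla_\alpha = \tnabla_\beta$ is equivalent to $\mathcal{L}_\nabla(\chi_{\alpha\beta}) = \eta_\beta' - \eta_\alpha'$, and both use the preceding Lemma on $F_{\alpha\beta}^*$ to extract the $\epsilon$-coefficient. The paper carries this out by evaluating both sides of the gluing condition on the local frame $\omega_\alpha$ and invoking the Leibniz rule \eqref{eq:leibniz_connection}, whereas you work more directly at the level of connections (using that $\eta_\alpha' = \tnabla_\alpha - \nabla$ is frame-independent, so the $dg_{\alpha\beta}/g_{\alpha\beta}$ contribution is confined to order $\epsilon^0$); these are the same computation, organized slightly differently.
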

\begin{proof}
  For the connections $\tnabla_\alpha$ to glue to a global connection on
  $\mathcal{X}$ amounts to
   \begin{equation}
     \label{eq:gluing_condition} F_{\alpha\beta}^* \tnabla_\alpha
\omega_\alpha = \tnabla_\beta (F_{\alpha\beta}^* \omega_\alpha),
   \end{equation}
   which we claim is equivalent to
   $\mathcal{L}_\nabla(\chi) - \delta \eta = 0$.  Since
   $\delta\chi=0$ exactly when $\chi$ arises from a first-order
   deformation of $(X, C)$, it follows from this claim that
   $D^1(\chi, \eta')=(\delta\chi, \mathcal{L}_\Delta(\chi) - \delta
   \eta)=0$ exactly when $(\chi, \eta')$ arises from a first-order deformation of $(X, C, \nabla)$, as desired.

   The left-hand-side of \eqref{eq:gluing_condition} simplifies to
  \begin{align*}
    &F_{\alpha\beta}^*(\eta_\alpha\otimes \omega_\alpha + \epsilon
    \eta_\alpha'\otimes \omega_a)\\
    &= (\eta_\alpha + \epsilon L_{\chi_{\alpha\beta}} \eta_\alpha) \otimes (\omega_\alpha + \epsilon L_{\chi_{\alpha\beta}} \omega_\alpha) + \epsilon \eta_\alpha' \otimes \omega_\alpha\\
    &= \eta_\alpha \otimes \omega_\alpha + \epsilon(\eta_\alpha\otimes L_{\chi_{\alpha\beta}}\omega_\alpha + L_{\chi_{\alpha\beta}}\eta_\alpha\otimes\omega_\alpha + \eta_\alpha' \otimes \omega_\alpha).
  \end{align*}
  Letting $g_{\alpha \beta}= \omega_\alpha/\omega_\beta$, the right-hand-side of \eqref{eq:gluing_condition} becomes
  \begin{equation*}
    \tnabla_\beta (\omega_\alpha + \epsilon L_{\chi_{\alpha\beta}} \omega_\alpha) 
    = (d g_{\alpha\beta}+ g_{\alpha\beta}\eta_\beta) \otimes \omega_\beta + \epsilon (\eta_\beta'\otimes \omega_\alpha + \nabla L_{\chi_{\alpha\beta}} \omega_\alpha). 
  \end{equation*}
  Subtracting these two equations gives
  \begin{equation*}
    \epsilon(\eta_\alpha\otimes L_{\chi_{\alpha\beta}}\omega_\alpha + L_{\chi_{\alpha\beta}}\eta_\alpha\otimes\omega_\alpha - \nabla L_{\chi_{\alpha\beta}} \omega_\alpha + (\eta_\alpha'-\eta_\beta')\otimes \omega_\alpha ) = 0, 
  \end{equation*}
  where we have used that the constant term vanishes because $\nabla$ is a global connection on $X$.  Using Equation \eqref{eq:leibniz_connection}, this becomes 
  \begin{equation*}
     \mathcal{L}_\nabla(\chi_{\alpha\beta}) + \eta_\alpha' - \eta_\beta'=0.
    \qedhere
  \end{equation*}
\end{proof}

\begin{lem}\label{L:UniquenessOfCocycle}
  A different first-order deformation $(\overline{\cX}, \overline{\cC}, \tbnabla)$ of $(X, C, \nabla)$ is isomorphic to $(\cX, \cC, \tnabla)$ if and only if $(\overline{\chi}, \overline{\eta}')=\cD(\overline{\cX}, \overline{\cC}, \tbnabla)$ differs from $(\chi,\eta')$ by a coboundary $D^0(\psi)$.
\end{lem}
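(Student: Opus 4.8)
The plan is to run the standard Kodaira–Spencer-style argument: an isomorphism of first-order deformations is the same data as the coboundary witnessing that the two \v{C}ech cocycles agree in hypercohomology. First I would unwind what an isomorphism $\Phi\colon (\overline{\cX},\overline{\cC},\tbnabla)\to (\cX,\cC,\tnabla)$ means: it is a morphism of families over $B=\Spec\cx[\epsilon]/(\epsilon^2)$ restricting to the identity on the central fiber $X$, carrying $\overline{\cC}$ to $\cC$, and satisfying $\Phi^*\tnabla = \tbnabla$. Working in the trivializations $F_\alpha,\overline{F}_\alpha$ of \eqref{eq:trivializations} over a common refinement of the two open covers, $\Phi$ is recorded locally by maps $U_\alpha\times B\to U_\alpha\times B$ of the form $(z,\epsilon)\mapsto (z+\epsilon\psi_\alpha(z),\epsilon)$, where $\psi_\alpha\in T_X(-C)(U_\alpha)$ (vanishing along $C$ because $\Phi$ fixes the sections pointwise). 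This is the global $0$-cochain $\psi\in C^0(T_X(-C))$.

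Next I would compute the effect of $\Phi$ on the two pieces of the data $\cD(\overline{\cX},\overline{\cC},\tbnabla)=(\overline{\chi},\overline{\eta}')$. For the first component, comparing $\overline{F}_{\alpha\beta}$ with $F_{\alpha\beta}$ through $\Phi$ gives the classical relation $\overline{\chi}_{\alpha\beta}-\chi_{\alpha\beta} = \psi_\alpha - \psi_\beta = (\delta\psi)_{\alpha\beta}$; this is exactly the Kodaira–Spencer computation for $(X,C)$ and I would cite it rather than redo it. For the second component, I apply the Lemma preceding this one: pushing forward $\tbnabla$ through the local expression of $\Phi$ contributes, at order $\epsilon$, the term $L_{\psi_\alpha}\nabla = \mathcal{L}_\nabla(\psi_\alpha)$ to the connection form on $U_\alpha$, by the same calculation ($F^*\nabla = \nabla + \epsilon L_\chi\nabla$) used in Lemma~\ref{L:ExistenceOfCocycle} but now with the $0$-cochain $\psi_\alpha$ in place of the $1$-cochain $\chi_{\alpha\beta}$. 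Hence $\overline{\eta}'_\alpha - \eta'_\alpha = \mathcal{L}_\nabla(\psi_\alpha)$. Together these say precisely $(\overline{\chi},\overline{\eta}') - (\chi,\eta') = (\delta\psi,\mathcal{L}_\nabla\psi) = D^0(\psi)$, which is the ``only if'' direction.

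For the converse, given $\psi\in C^0(T_X(-C))$ I would define the candidate isomorphism $\Phi$ locally by $(z,\epsilon)\mapsto(z+\epsilon\psi_\alpha(z),\epsilon)$ in the chosen trivializations and check that these local maps are compatible with the gluing data of $(\cX,\cC)$ — which holds exactly because the first-order deformation classes $\chi$ and $\overline\chi=\chi+\delta\psi$ differ by $\delta\psi$ — so $\Phi$ is a well-defined isomorphism of pointed deformations, and then that $\Phi^*\tnabla=\tbnabla$, which is the order-$\epsilon$ identity $\overline{\eta}'_\alpha = \eta'_\alpha + \mathcal{L}_\nabla(\psi_\alpha)$ just established, read backwards. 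I expect the main obstacle to be purely bookkeeping: making sure the two deformations are compared over a common refinement of their covers and trivializations, that the ``isomorphism over $B$ restricting to $\mathrm{id}_X$'' really is recorded by a $T_X(-C)$-valued (not merely $T_X$-valued) $0$-cochain because the sections $\cC$ must be preserved, and that signs and the placement of the $\epsilon$-linear terms match the conventions fixed in $D^0$; the geometric content is entirely contained in the preceding Lemma and in the Kodaira–Spencer identification already recalled.
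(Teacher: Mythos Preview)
Your proposal is correct and follows essentially the same approach as the paper: record the isomorphism locally via a $0$-cochain $\psi\in C^0(T_X(-C))$, obtain $\overline{\chi}-\chi=\delta\psi$ from the Kodaira--Spencer identification, and obtain $\overline{\eta}'-\eta'=\mathcal{L}_\nabla(\psi)$ by applying the pullback formula $G^*\nabla=\nabla+\epsilon L_\psi\nabla$ from the preceding Lemma. The paper carries out the second computation explicitly by expanding $G_\alpha^*\tnabla_\alpha\omega_\alpha=\tbnabla_\alpha G_\alpha^*\omega_\alpha$ and invoking \eqref{eq:leibniz_connection}, which amounts to the same calculation you cite.
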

\begin{proof}
  We denote analogous objects for the deformation
  $(\overline{\cX}, \overline{\cC}, \tbnabla)$ by an overline, such as
  transition functions $\overline{F}_{\alpha\beta}$ and cocycles
  $\overline{\chi}_{\alpha\beta}$.

  Constructing an isomorphism of first-order deformations amounts to constructing $G_\alpha\colon U_\alpha\times B\to U_\alpha\times B$ which commute with the projection to $B$, respecting marked sections, and such that
  \begin{align}
    \label{eq:FG}
    F_{\alpha\beta} G_\beta &=G_\alpha \overline{F}_{\alpha\beta} \quad\text{and}\\
    \label{eq:pullbackbyG}
    G_\alpha^*\tnabla_\alpha &= \tbnabla_\alpha.
  \end{align}
  The $G_\alpha$ are defined by holomorphic vector fields
  $\psi_\alpha\in T_X(-C)(U_\alpha)$ so that
  \begin{equation*}
    G_\alpha^* h = (h + \epsilon \psi_\alpha(g)),
  \end{equation*}
  for every $h$ holomorphic on $U_\alpha$.  In these terms, \eqref{eq:FG} amounts to
  \begin{equation}
    \label{eq:coboundary1}
    \overline{\chi} - \chi = \delta (\psi).
  \end{equation}
  
  Equation \eqref{eq:pullbackbyG} is equivalent to
  \begin{equation}
    \label{eq:pullbackbyG2}
    G_\alpha^* \tnabla_\alpha \omega_\alpha = \tbnabla_\alpha
    G_\alpha^* \omega_\alpha.
  \end{equation}
  The left hand side of \eqref{eq:pullbackbyG2} simplifies to
  \begin{equation*}
    G_\alpha^*\left( (\eta_\alpha + \epsilon \eta_\alpha')\otimes
    \omega_\alpha \right) = (\eta_\alpha + \epsilon(L_{\psi_\alpha}
    \eta_\alpha + \eta_\alpha'))\otimes (\omega_\alpha + \epsilon L_{\psi_\alpha}\omega_\alpha),
  \end{equation*}
  and the right hand side simplifies to
  \begin{equation*}
    \tbnabla_\alpha(\omega_\alpha + \epsilon L_{\psi_\alpha}
    \omega_\alpha) = (\eta_\alpha + \epsilon
    \overline{\eta}_\alpha')\otimes \omega_\alpha + \epsilon
    \tnabla_\alpha L_{\psi_\alpha}\omega_\alpha.
  \end{equation*}
  Subtracting these gives
  \begin{align*}
    \epsilon(\overline{\eta}_\alpha' - \eta_\alpha')\otimes \omega_\alpha &= \epsilon(L_{\psi_\alpha} (\eta_\alpha \otimes \omega_\alpha)  - \nabla L_{\psi_\alpha}
    \omega_\alpha ) \\
                                                                          &=\epsilon( L_{\psi_\alpha} (\nabla \omega_\alpha) - \nabla (L_{\psi_\alpha} \omega_\alpha) )  \\
                                                                          &= \epsilon (L_{\psi_\alpha}\nabla)\otimes \omega_\alpha,
  \end{align*}
  where the last equality follows from \eqref{eq:leibniz_connection}.  
  It follows that \eqref{eq:pullbackbyG} is equivalent to
  \begin{equation}
    \label{eq:coboundary2}
    \mathcal{L}_\nabla(\psi_\alpha)=  \overline{\eta}_\alpha'-\eta_\alpha'.
  \end{equation}
  
  Combining equations \eqref{eq:coboundary1} and \eqref{eq:coboundary2}, we see that defining an isomorphism of deformations amounts to solving the coboundary equation,  $(\overline{\chi}, \overline{\eta}')-(\chi,\eta') = (\delta
  (\psi), \mathcal{L_{\nabla}(\psi)}) = D^0(\psi)$. 
\end{proof}

\begin{proof}[Proof of Theorem~\ref{thm:affine_deformations}]
  Lemmas~\ref{L:ExistenceOfCocycle} and~\ref{L:UniquenessOfCocycle} together show that $\cD$ is a well-defined bijection.
\end{proof}

\section{The derivative of holonomy}
\label{sec:derivativeofholonomy}

In this section, we prove Theorem~\ref{thm:holonomy_derivative_intro},
calculating the derivative of $\THol$.  The idea of the proof is to
use Proposition~\ref{prop:turning de Rham} to represent the turning
number of any curve $\gamma$ as a constant minus $\frac{1}{2\pi i}\int_\gamma
\theta$, where $\theta = \nabla\zeta/\zeta$ for $\zeta$ a meromorphic one-form.
Differentiating turning numbers then amounts to differentiating the de
Rham cohomology of $\theta$, or in other words, applying
the Gauss-Manin connection to $\theta$.  We start by
recalling a formula for the Gauss-Manin connection (see \cite{katzoda}
or \cite[Theorem~7.7]{deligne_equations_differentielles} for more
general calculations).

Let $\Omega'\Teich$ denote the \Teichmuller space of pointed surfaces
$(X,C)$  with a meromorphic one-form $\omega$ with at worst
 simple poles along $C$, and consider the map $\dR\colon \Omega'\Teich \to H^1( \Sigma
\setminus P, \cx)$ sending a form to its de Rham cohomology class.

Given $(X, C, \omega)\in\Omega'\Teich$,  consider the two-term complex of sheaves $M^\bullet$ (in degrees $0$ and
$1$):
\begin{equation*}
  \xymatrix{
    T_X(-C) \ar[r]^{\mathcal{L}_\omega}  & \Omega_X(C)
  },
\end{equation*}
whose differential is the Lie derivative $\mathcal{L}_\omega(v) =
L_v\omega$.

\begin{prop}
  \label{prop:GaussManin}
  Given a form $(X, C, \omega)\in \Omega'\Teich$, there is a natural
  isomorphism between the tangent space $T_{(X, C,
    \omega)}\Omega'\Teich$ and the first hypercohomolgy
  $\mathbb{H}^1(M^\bullet)$.  In terms of this isomorphism, the
  derivative of $\dR$ at $(X, C, \omega)$ is the map
  $\mathbb{H}^1(M^\bullet)\to \mathbb{H}^1(\Omega_X(\log C))$ induced
  by the following morphism of complexes,
  \begin{equation*}
    \xymatrix{
      \Omega_X(C) \ar[r]^{\mathrm{id}} & \Omega_X(C) \\
      T_X(-C) \ar[r]^{i_\omega}\ar[u]^{\mathcal{L}_\omega}  & \mathcal{O}_X \ar[u]^d
    }, 
  \end{equation*}
  where the bottom arrow is the interior product $i_\omega(v) = \iota_v(\omega)$.
\end{prop}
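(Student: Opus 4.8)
The plan is to deduce the proposition in two steps that parallel Section~\ref{sec:deformations}: first identify $T_{(X,C,\omega)}\Omega'\Teich$ with $\mathbb{H}^1(M^\bullet)$ via first-order deformations, and then evaluate the derivative of $\dR$ in the \v{C}ech model of the logarithmic de Rham complex.

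\emph{Step 1: the tangent space.} Since $\Omega'\Teich$ carries a tautological relative one-form and satisfies the evident universal property (cf.\ \cite{ABW}), the tangent space $T_{(X,C,\omega)}\Omega'\Teich$ is canonically the space of first-order deformations of the triple $(X,C,\omega)$: flat families $(\pi\colon\mathcal{X}\to B,\mathcal{C})$ over $B=\Spec\cx[\epsilon]/(\epsilon^2)$ restricting to $(X,C)$, together with a relative meromorphic one-form $\tilde\omega$ on $\mathcal{X}$, with at worst simple poles along $\mathcal{C}$, restricting to $\omega$. I would then repeat the \v{C}ech computations of Lemmas~\ref{L:ExistenceOfCocycle} and~\ref{L:UniquenessOfCocycle} with the connection $\nabla$ replaced throughout by the form $\omega$ and $\mathcal{L}_\nabla$ by $\mathcal{L}_\omega$: an acyclic cover $\{U_\alpha\}$ of $X$ with trivializations $F_\alpha$ as in \eqref{eq:trivializations} produces the Kodaira--Spencer cocycle $\chi=(\chi_{\alpha\beta})\in C^1(T_X(-C))$ of the underlying deformation of $(X,C)$, and writing $(F_\alpha)_*\tilde\omega=\omega+\epsilon\,\omega'_\alpha$ with $\omega'_\alpha\in\Omega_X(C)(U_\alpha)$ records the deformation of $\omega$. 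Using the identity $F_{\alpha\beta}^*\omega=\omega+\epsilon\,L_{\chi_{\alpha\beta}}\omega$ from Section~\ref{sec:deformations}, the gluing condition $F_{\alpha\beta}^*\bigl((F_\alpha)_*\tilde\omega\bigr)=(F_\beta)_*\tilde\omega$ becomes $\mathcal{L}_\omega(\chi)=\delta\omega'$; together with $\delta\chi=0$ this says $(\chi,\omega')\in\ker D^1$, where $D^1(\chi,\eta)=(\delta\chi,\mathcal{L}_\omega(\chi)-\delta\eta)$, and a change of trivializations by vector fields $\psi_\alpha\in T_X(-C)(U_\alpha)$ shifts $(\chi,\omega')$ by the coboundary $D^0(\psi)=(\delta\psi,\mathcal{L}_\omega\psi)$. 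This yields the natural isomorphism $T_{(X,C,\omega)}\Omega'\Teich\cong\mathbb{H}^1(M^\bullet)$.

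\emph{Step 2: the derivative of $\dR$.} Since a meromorphic one-form on a curve is closed, the class $\dR(X,C,\omega)=[\omega]$ is represented in the \v{C}ech model of the logarithmic de Rham complex $\mathcal{O}_X\xrightarrow{\,d\,}\Omega_X(C)$ by the cocycle with vanishing \v{C}ech part and form part $\{\omega|_{U_\alpha}\}$. For a first-order deformation as above, I would transport the relative form to the fixed product $X\times B$ along the charts $F_\alpha$, obtaining local forms $\tilde\omega_\alpha=(F_\alpha)_*\tilde\omega=\omega+\epsilon\,\omega'_\alpha$, which on overlaps now fail to agree on the \emph{standard} product by \[ \tilde\omega_\beta-\tilde\omega_\alpha=\epsilon(\omega'_\beta-\omega'_\alpha)=\epsilon\,L_{\chi_{\alpha\beta}}\omega=\epsilon\,d\bigl(\iota_{\chi_{\alpha\beta}}\omega\bigr), \] the last step by Cartan's formula and $d\omega=0$. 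Hence $[\tilde\omega]$ is represented by the \v{C}ech--de Rham cocycle $\bigl(\{\epsilon\,\iota_{\chi_{\alpha\beta}}\omega\},\,\{\omega+\epsilon\,\omega'_\alpha\}\bigr)$ for $\Omega_X^\bullet(\log C)$; its $\epsilon^0$ term is $\dR(X,C,\omega)$, and its $\epsilon^1$ term — which, by the standard local description of the Gauss--Manin connection (cf.\ \cite{katzoda}), is the value of $D\dR$ on the deformation — is $\bigl(\{\iota_{\chi_{\alpha\beta}}\omega\},\,\{\omega'_\alpha\}\bigr)$. This is precisely the image of $(\chi,\omega')\in\mathbb{H}^1(M^\bullet)$ under the morphism of complexes in the statement, which is a morphism of complexes exactly because $d\circ i_\omega=\mathcal{L}_\omega$ (again Cartan plus $d\omega=0$). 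This proves the formula for $D\dR$.

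\emph{The main obstacle.} The only genuinely delicate point is the identification in Step~2 of ``$D\dR$ applied to a deformation'' with ``the coefficient of $\epsilon$ of the \v{C}ech--de Rham cocycle pulled back along the $F_\alpha$'': one must check that the marking-induced trivialization of $H^1_{\dR}(\mathcal{X}\setminus\mathcal{C}/B)\cong H^1(\Sigma\setminus S,\cx)\otimes\cx[\epsilon]/(\epsilon^2)$ is the flat (Gauss--Manin) one, so that extracting the $\epsilon$-coefficient in those charts really computes the differential of $\dR$; this is the content of the local Gauss--Manin formula recalled before the proposition. The step that makes the interior product $i_\omega$ appear — trivial once spotted — is the rewriting $L_v\omega=d(\iota_v\omega)$, which converts the Kodaira--Spencer discrepancy into an exact correction term. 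Everything else (the cocycle bookkeeping of Step~1 and the sign conventions) is a routine transcription of Lemmas~\ref{L:ExistenceOfCocycle}--\ref{L:UniquenessOfCocycle}.
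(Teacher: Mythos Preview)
Your proof is correct, and Step~1 is essentially identical to the paper's. In Step~2 you take a genuinely different route. The paper does not stay in the first-order holomorphic \v{C}ech model: it first passes from the infinitesimal deformation to an actual family over a holomorphic disk (using smoothness of $\Omega'\Teich$), then invokes a global \emph{horizontally analytic} $C^\infty$ trivialization $\overline{F}\colon\mathcal{X}\to X\times B$ (from \cite{HubbardBook}) to transport $\tilde\omega$ to a single smooth family of $1$-forms on $X$. Its $\epsilon$-derivative is a global smooth form $\eta=L_{\psi_\alpha}\omega+\omega'_\alpha$ on $X$ that manifestly represents $D\dR$ (since the smooth trivialization realizes the topological, hence Gauss--Manin, parallel transport). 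The paper then compares $(i_\omega(\chi),\omega')$ with $(0,\eta)$ inside $\mathbb{H}^1(\cA^\bullet_X(C))$ via the coboundary $D^0(i_\omega(\psi))$, using the quasi-isomorphism $\Omega_X^\bullet(\log C)\to\cA_X^\bullet(C)$. Your approach instead reads the derivative directly off the holomorphic \v{C}ech--de~Rham cocycle and cites Katz--Oda for the identification with Gauss--Manin. The tradeoff is exactly what you flagged in your ``obstacle'' paragraph: the paper's smooth-trivialization detour makes the Gauss--Manin step self-contained (it is essentially a proof of the Katz--Oda formula in this special case), while your argument is shorter but offloads that step to the citation.
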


\begin{proof}
  Consider a first-order deformation $(\pi\colon\mathcal{X}\to B,
  \mathcal{C}, \tilde{\omega})$ of $(X, C, \omega)$.  Following the notation for
  deformations in \S\ref{sec:deformations}, the deformation
  $(\mathcal{X}, \mathcal{C})$ is determined by a cocycle
  $\chi\in H^1(T_X(-C))$, and $\widetilde{\omega}$ is represented in local
  coordinates as
  \begin{equation*}
    (F_\alpha)_*\widetilde{\omega} = \omega + \epsilon \omega_\alpha',
  \end{equation*}
  where $\omega_\alpha'\in \Gamma(U_\alpha, \Omega_X(C))$.  This
  construction assigns a cocycle
  $(\chi, \omega')\in C^1(T_X(-C))\oplus C^0(\Omega_X(C))$ to every
  first-order deformation of $(X, C, \omega)$.  The condition for
  these forms to glue is that
  \begin{equation*}
    \omega + \epsilon \omega_\beta' = F_{\alpha\beta}^*(\omega +
    \epsilon \omega_\alpha') = \omega +
    \epsilon(\mathcal{L}_\omega(\chi_{\alpha\beta}) + \omega_\alpha'),
  \end{equation*}
  which is equivalent to $D^1(\chi, \omega')= (\delta\chi,
  \mathcal{L}_\omega(\chi) - \delta \omega')=0$.   Given a different
  choice of trivializations $\overline{F}_\alpha$ of $\mathcal{X}$,
  and with $(\overline{F}_\alpha)_*\widetilde{\omega} = \omega +
  \epsilon \overline{\omega}_\alpha'$,  let $G_\alpha =
  F_\alpha\overline{F}_\alpha^{-1}$.  We have
  \begin{equation*}
    \omega + \epsilon \overline{\omega}_\alpha' = G_\alpha^*(\omega +
    \epsilon \omega_\alpha') = \omega +
    \epsilon(\mathcal{L}_\omega(\psi_\alpha)+ \omega_\alpha' ).
  \end{equation*}
  This is equivalent to
  $(\overline{\chi}, \overline{\omega}')-(\chi,\omega') = (\delta
  \psi, \mathcal{L}_{\omega}(\psi)) = D^0(\psi)$, which gives that
  our correspondence between first-order deformations and
  $\mathbb{H}^1(M^\bullet)$ is well-defined and bijective.

  It remains to show that the cocycle $(i_\omega(\chi), \omega')$
  represents the derivative of $\dR$ applied to the tangent vector
  represented by our above first-order deformation.  Since $\Omega'\Teich$
  is smooth, every tangent vector is tangent to a holomorphic disk, so
  we may pass from our first-order deformation to a deformation over a
  holomorphic disk.  We will continue to use the same notation from
  the previous paragraph and \S\ref{sec:deformations} with the
  understanding that the base $B$ is now a disk.

  By \cite[Proposition~6.2.3]{HubbardBook}, the family $\mathcal{X}\to B$ (after possibly decreasing the
  radius of $B$) has a horizontally analytic trivialization
  $\overline{F}\colon \mathcal{X}\to\ X \times B$, meaning that $\overline{F}$ is
  smooth, and the restriction of $\overline{F}^{-1}$ to any
  horizontal slice $\{z\}\times B$ is holomorphic.  Write
  $\overline{F}_\alpha(z,\epsilon)$
  for the restriction of $\overline{F}$ to $V_\alpha$.  As
  before, define $G_\alpha(z,\epsilon)=(g_\alpha(z,\epsilon),
  \epsilon)$ by $G_\alpha = F_\alpha\overline{F}_\alpha^{-1}$, and
  let $\psi_\alpha = \pderiv{g_\alpha}{\epsilon}|_X$, a smooth vector
  field on $U_\alpha$.  As in \eqref{eq:coboundary1}, we have
    $-\chi = \delta(\psi)$.      
 Transporting $\omega$ to $X\times B$ by the smooth trivialization
 gives
 \begin{equation*}
   G_\alpha^*(\omega+\epsilon\omega_\alpha') = \omega +
   \epsilon(L_{\psi_\alpha} + \omega_\alpha') + O(\epsilon^2).
 \end{equation*}
 Since $\delta(L_\psi(\omega) + \omega') = - L_\chi(\omega) +
 \delta\omega'=0$, the forms $\eta_\alpha=L_{\psi_\alpha}(\omega) + \omega_\alpha'$ on
 $U_\alpha$ glue to a global smooth $1$-form $\eta$ on $X$ which represents
 the derivative of $\dR$ along $B$.

 Now, consider the complex of sheaves  $\cA^k_X(C)$ on $X$, where
 $\cA^k_X(C)$ assigns to an open set $U$ the smooth $k$-forms on
 $U\setminus C$.  The morphism of complexes $\Omega^\bullet_X\to
 \cA^\bullet_X$ is a quasi-isomorphism and so the induced map on
 hypercohomology is an isomorphism (see \cite[p.~450]{grhabook})).  To show that the class of
 $(i_\omega(\chi), \omega')$ in $\mathbb{H}^1(\Omega^\bullet_X)$
 represents the class of $\eta$ in de Rham cohomology, it then suffices to
 show that $(i_\omega(\chi), \omega')\sim(0, L_\psi(\omega)+\omega')$ in $\mathbb{H}^1(\cA^\bullet_X)$, as is
 shown by the following computation:
 \begin{multline*}
   (0, L_\psi(\omega) + \omega') -(i_\omega(\chi), \omega') =
   (-i_\omega(\chi), L_\psi(\omega))= \\
   (\delta i_\omega(\psi),  d i_\omega(\psi)) = D^0(i_\omega(\psi))
   \qedhere
 \end{multline*}
\end{proof}

\begin{proof}[Proof of Theorem~\ref{thm:holonomy_derivative_intro}]
  Let $(\cX, \cC, \tnabla)$ be a first order deformation of $(X, C,
  \nabla)$ representing a tangent vector $V$ to $\TA$ at $X$.  Consider a relative meromorphic form $\zeta$ on $\cX$
  with zeros and poles contained in $\cC$, and let $\theta =
  \nabla\zeta/\zeta$.

  Given a closed curve $\gamma$ on $X\setminus C$.  By
  Proposition~\ref{prop:turning de Rham} (whose proof applies to
  families without change),
  \begin{equation*}
    \tau(\gamma)=  \tau_\zeta(\gamma) - \frac{1}{2\pi i}\int_\gamma\theta.
  \end{equation*}
  Since $\tau_\zeta(\gamma)$ is integral, it is constant on $B$.  It
  follows that the derivative $D_V\THol$ may be computed as
  $-\frac{1}{2\pi i}$  times the derivative of $\dR$ along the vector
  associated to the first-order deformation $(\cX, \cC, \theta)$.

  Following the notation from \S\ref{sec:deformations}, we represent
  $\zeta$ in local coordinates as
  \begin{equation*}
    (F_\alpha)_* \zeta = (h_\alpha + \epsilon h_\alpha')\omega_\alpha,
  \end{equation*}
  where $h_\alpha$ and $h_\alpha'$ are meromorphic functions on
  $U_\alpha$.  Since these forms glue to $\zeta$, we have 
  \begin{equation*}
    F_{\alpha\beta}^*(h_\alpha + \epsilon h_\alpha')\omega_\alpha=(h_\beta + \epsilon h_\beta')\omega_\beta,
  \end{equation*}
  which is equivalent to the system,
  \begin{align*}
    h_\alpha\omega_\alpha &= h_\beta\omega_\beta\\
    L_{\chi_{\alpha\beta}}(h_\alpha\omega_\alpha) &=
                                                    h_\beta'\omega_\beta -h_\alpha'\omega_\alpha.
  \end{align*}
  Combining these gives
  \begin{equation}
    \label{eq:fuckthisequation}
    \frac{h_\beta'}{h_\beta} - \frac{h_\alpha'}{h_\alpha} = \frac{L_{\chi_{\alpha\beta}}(h_\alpha\omega_\alpha)}{h_\alpha\omega_\alpha} = \frac{L_{\chi_{\alpha\beta}}h_\alpha}{h_\alpha} + \frac{L_{\chi_{\alpha\beta}} \omega_\alpha}{\omega_\alpha}.
  \end{equation}

  Similarly, we represent $\theta$ in local coordinates as
  \begin{equation*}
    (F_\alpha)_*\theta = \frac{\nabla((h_\alpha + \epsilon
      h_\alpha')\omega_\alpha)}{(h_\alpha + \epsilon
      h_\alpha')\omega_\alpha} = \frac{d h_\alpha}{h_\alpha}+
    \eta_\alpha +
    \epsilon\left(d\left(\frac{h_\alpha'}{h_\alpha}\right) + \eta_\alpha'\right).
  \end{equation*}
  By Proposition~\ref{prop:GaussManin}, the derivative of $\dR$ is
  represented in $\mathbb{H}^1(\Omega_X(\log C))$ by
  \begin{equation*}
    \left(\iota_{\chi_{\alpha\beta}}\left(\frac{dh_\alpha}{h_\alpha} +
        \eta_\alpha \right), d\left(\frac{h_\alpha'}{h_\alpha}\right)+ \eta_\alpha'\right),
  \end{equation*}
  which we claim is cohomologous to $(-\nabla^*\chi_{\alpha\beta},\eta_\alpha')$.  We calculate
  \begin{align*}
    &\left(\iota_{\chi_{\alpha\beta}}\left(\frac{dh_\alpha}{h_\alpha} + \eta_\alpha \right), d\left(\frac{h_\alpha'}{h_\alpha}\right)+
      \eta_\alpha'\right) - (-\nabla^*\chi_{\alpha\beta},\eta_\alpha') - D^0\left(\frac{h_\alpha'}{h_\alpha}\right)\\
    &= \left(\frac{L_{\chi_{\alpha\beta}}h_\alpha}{h_\alpha} + \iota_{\chi_{\alpha\beta}}\eta_\alpha  + \nabla^*\chi_{\alpha\beta} - \frac{h_\beta'}{h_\beta} + \frac{h_\alpha'}{h_\alpha},0\right)\\
    &= \left(-\frac{L_{\chi_{\alpha\beta}}\omega_\alpha}{\omega_\alpha} + \iota_{\chi_{\alpha\beta}}\eta_\alpha  + \nabla^*\chi_{\alpha\beta},0\right),
  \end{align*}
  where the last equality uses \eqref{eq:fuckthisequation}.  That the last line is $0$ follows from
  \begin{align}\label{eq:liederivativeofomega}
    L_{\chi_{\alpha\beta}}\omega_\alpha &= d\langle\chi_{\alpha\beta},
                    \omega_\alpha\rangle\\ \nonumber
    &=\langle\nabla^*\chi_{\alpha\beta}, \omega_\alpha\rangle +
      \langle\chi_{\alpha\beta}, \nabla\omega_\alpha\rangle\\ 
    &= \langle\nabla^*\chi_{\alpha\beta}, \omega_\alpha\rangle+
      \langle\chi_{\alpha\beta},
      \eta_\alpha\otimes\omega_\alpha\rangle.\qedhere \nonumber
  \end{align}

\end{proof}

\section{The coderivative formula}
\label{sec:submersion}

In this section, we prove Theorem~\ref{T:coderivative}, computing the
coderivative of $\THol$, which completes the proof of
Theorem~\ref{T:Veechsubmersion}.  We begin by calculating the
hypercohomology $\bH^1(L^\bullet)$.

\begin{lem}\label{L:HypercohomologyOfLComputation}
We have $\bH^0(L^\bullet) = \bH^2(L^\bullet) = 0$, and there is a short exact sequence
\[ 0 \ra H^0(\Omega_X(C)) \ra \bH^1(L^\bullet) \ra H^1(T_X(-C)) \ra  0 \]
\end{lem}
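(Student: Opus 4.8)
The plan is to use the standard hypercohomology spectral sequence for a two-term complex of sheaves on a curve. For the two-term complex $L^\bullet = [\,T_X(-C) \xra{\mathcal{L}_\nabla} \Omega_X(C)\,]$ concentrated in degrees $0$ and $1$, there is a spectral sequence with $E_1$-page given by $E_1^{p,q} = H^q(X, L^p)$ converging to $\bH^{p+q}(L^\bullet)$. Since $X$ is a curve, the only nonzero terms are in columns $p = 0, 1$ and rows $q = 0, 1$, so the spectral sequence degenerates at $E_2$. This gives a four-term exact sequence
\begin{equation*}
  0 \ra \bH^0(L^\bullet) \ra H^0(T_X(-C)) \xra{\mathcal{L}_\nabla} H^0(\Omega_X(C)) \ra \bH^1(L^\bullet) \ra H^1(T_X(-C)) \xra{\mathcal{L}_\nabla} H^1(\Omega_X(C)) \ra \bH^2(L^\bullet) \ra 0,
\end{equation*}
where the connecting maps are induced by the differential $\mathcal{L}_\nabla$ on cohomology.

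Given this exact sequence, the claims reduce to two vanishing statements: $H^0(T_X(-C)) = 0$ and the surjectivity (indeed injectivity and surjectivity, i.e.\ being an isomorphism would be stronger than needed, but) of $\mathcal{L}_\nabla \colon H^1(T_X(-C)) \to H^1(\Omega_X(C))$. First, $H^0(T_X(-C)) = 0$: since $C$ is nonempty (if $C = \emptyset$ the complex and the statement should be interpreted with the understanding that $\deg \Omega_X(C) \geq 2g - 2$; but in the relevant setting with cone points $C \neq \emptyset$), $T_X(-C) = \Omega_X^{-1}(-C)$ has degree $2 - 2g - n < 0$ when $2g - 2 + n > 0$, hence no nonzero global sections; this forces $\bH^0(L^\bullet) = 0$. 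Second, and this is the main point, I need $\mathcal{L}_\nabla \colon H^1(T_X(-C)) \to H^1(\Omega_X(C))$ to be surjective with the exact sequence then pinching off $\bH^2(L^\bullet) = 0$ and producing the desired short exact sequence $0 \to H^0(\Omega_X(C)) \to \bH^1(L^\bullet) \to H^1(T_X(-C)) \to 0$ — here I also use that the first connecting map $H^0(T_X(-C)) \to H^0(\Omega_X(C))$ is zero since its source vanishes, so all of $H^0(\Omega_X(C))$ injects into $\bH^1(L^\bullet)$.

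The surjectivity of $\mathcal{L}_\nabla$ on $H^1$ is the crux. By Serre duality, $H^1(T_X(-C))^* \cong H^0(\Omega_X \otimes \Omega_X(C)) = H^0(\Omega_X^2(C))$ and $H^1(\Omega_X(C))^* \cong H^0(\Omega_X \otimes \Omega_X^{-1}(-C)) = H^0(\mathcal{O}_X(-C)) = 0$ since $C \neq \emptyset$. Thus $H^1(\Omega_X(C)) = 0$ outright, and surjectivity of $\mathcal{L}_\nabla$ onto it is automatic, as is $\bH^2(L^\bullet) = 0$. So in fact the argument is cleaner than anticipated: both $H^1(\Omega_X(C)) = 0$ and $H^0(T_X(-C)) = 0$ follow from degree considerations (using $C \neq \emptyset$), and the four-term exact sequence collapses directly to
\begin{equation*}
  0 \ra H^0(\Omega_X(C)) \ra \bH^1(L^\bullet) \ra H^1(T_X(-C)) \ra 0
\end{equation*}
together with $\bH^0(L^\bullet) = \bH^2(L^\bullet) = 0$.

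The step I expect to require the most care is not a deep obstacle but rather bookkeeping: correctly setting up the hypercohomology spectral sequence (or equivalently, the long exact sequence associated to the two stupid truncations of $L^\bullet$, i.e.\ the distinguished triangle $\Omega_X(C)[-1] \to L^\bullet \to T_X(-C) \to \Omega_X(C)$) and identifying its connecting homomorphism with the map induced by $\mathcal{L}_\nabla$ on $H^1$. An alternative, perhaps more elementary, route avoids the spectral sequence entirely: use the hypercohomology long exact sequence
\begin{equation*}
  \cdots \ra H^i(T_X(-C)) \ra \bH^i(L^\bullet) \ra H^{i-1}(\Omega_X(C)) \xra{\mathcal{L}_\nabla} H^{i+1}(T_X(-C)) \ra \cdots
\end{equation*}
wait — the correct exact sequence is the one I wrote above coming from the short exact sequence of complexes $0 \to \Omega_X(C)[-1] \to L^\bullet \to T_X(-C) \to 0$, which yields $\cdots \to H^{i-1}(\Omega_X(C)) \to \bH^i(L^\bullet) \to H^i(T_X(-C)) \xra{\mathcal{L}_\nabla} H^i(\Omega_X(C)) \to \cdots$. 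Plugging in $i = 0, 1, 2$ and the vanishing of $H^0(T_X(-C))$, $H^1(\Omega_X(C))$, $H^2(T_X(-C))$ (the last for dimension reasons on a curve) gives exactly the stated conclusion.
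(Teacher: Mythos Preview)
Your proof is correct and takes essentially the same approach as the paper: both use the long exact sequence in hypercohomology arising from the short exact sequence of complexes $0 \to \Omega_X(C)[-1] \to L^\bullet \to T_X(-C) \to 0$, together with the vanishing $H^0(T_X(-C)) = 0$, $H^1(\Omega_X(C)) = 0$ (by Serre duality), and $H^2(T_X(-C)) = 0$. Your initial spectral sequence formulation is equivalent, and the paper's argument is just the cleaned-up version you arrive at in your final paragraph.
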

\begin{proof}
Consider the following short exact sequence of complexes, 
\begin{center}
\begin{tikzcd}
    0 \arrow{r}  &  \Omega_X(C)  \arrow{r} & \Omega_X(C)  \arrow{r}  & 0  \arrow{r}  & 0  \\
     0 \arrow{r} \arrow{u} & 0  \arrow{r}  \arrow{u} & T_X(-C) \arrow{r}  \arrow{u} & T_X(-C) \arrow{r}  \arrow{u}  & 0. \arrow{u}  \\
\end{tikzcd}
\end{center}
It induces a long exact sequence on hypercohomology, which begins
\[ 0 \ra \bH^0(L^\bullet) \ra H^0(T_X(-C)) = 0 \]
which shows that $\bH^0(L^\bullet) = 0$. The sequence continues 
\[ 0 \ra H^0(\Omega_X(C)) \ra \bH^1(L^\bullet) \ra H^1(T_X(-C)) \ra H^1(\Omega_X(C)) = 0 \]
where the final equality is via Serre duality.
The long exact sequence ends with 
\[ 0 \ra \bH^2(L^\bullet) \ra H^2(T_X(-C)) =0  \]
showing that $\bH^2(L^\bullet) = 0$.
\end{proof}

We obtain the following Corollary, which is also a consequence of the fact that
$\ModA\to\moduli$ is an affine bundle.

\begin{cor}
The map from $\cA_{g,n}$ to $\cM_{g,n}$ that forgets the affine structure is a submersion.
\end{cor}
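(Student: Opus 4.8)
The plan is to deduce this immediately from Lemma~\ref{L:HypercohomologyOfLComputation}. By Theorem~\ref{T:DerivativeOfHol}, the tangent space $T_{(X,C,\nabla)}\ModA$ is identified with $\bH^1(L^\bullet)$, and under this identification the derivative of the forgetful map $\ModA\to\moduli$ is the map $\bH^1(L^\bullet)\to H^1(T_X(-C))$ induced by the morphism of complexes $L^\bullet\to (T_X(-C)\to 0)$ (projecting onto the degree-zero term), since $H^1(T_X(-C))$ is the Kodaira--Spencer space of first-order deformations of $(X,C)$ by standard deformation theory.

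First I would recall that the short exact sequence in Lemma~\ref{L:HypercohomologyOfLComputation},
\[
0 \ra H^0(\Omega_X(C)) \ra \bH^1(L^\bullet) \ra H^1(T_X(-C)) \ra 0,
\]
is precisely the sequence induced on hypercohomology by the short exact sequence of complexes displayed in the proof of that lemma, and that its third arrow is exactly the derivative of the forgetful map described above. Surjectivity of this arrow is the content of the lemma. Hence $D(\text{forget})$ is surjective at every point, which is the definition of a submersion.

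The only point requiring a word of justification is that the connecting map $\bH^1(L^\bullet)\to H^1(T_X(-C))$ really is the derivative of forgetting the affine structure: this follows because the deformation-theoretic description in \S\ref{sec:deformations} sends a first-order deformation $(\cX,\cC,\tnabla)$ to the pair $(\chi,\eta')$ with $\chi\in C^1(T_X(-C))$ the Kodaira--Spencer cocycle of the underlying deformation $(\cX,\cC)$, so the projection $(\chi,\eta')\mapsto\chi$ is exactly the derivative of the forgetful map. I expect no serious obstacle here; the statement is essentially a repackaging of the exact sequence already established.

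\begin{proof}
By Theorem~\ref{T:DerivativeOfHol}, the tangent space $T_{(X,C,\nabla)}\cA_{g,n}$ is identified with $\bH^1(L^\bullet)$ in such a way that the derivative of the forgetful map $\cA_{g,n}\to\cM_{g,n}$ is the map to $H^1(T_X(-C))$ (the Kodaira--Spencer space of $(X,C)$) induced by the projection of $L^\bullet$ onto its degree-zero term $T_X(-C)$; indeed, the deformation $\cD\colon\Def(X,C,\nabla)\to\bH^1(L^\bullet)$ of \S\ref{sec:deformations} sends $(\cX,\cC,\tnabla)$ to $(\chi,\eta')$ with $\chi$ the Kodaira--Spencer cocycle of $(\cX,\cC)$, so this projection is the derivative of forgetting $\nabla$. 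This induced map is the third arrow of the short exact sequence
\[
0 \ra H^0(\Omega_X(C)) \ra \bH^1(L^\bullet) \ra H^1(T_X(-C)) \ra 0
\]
of Lemma~\ref{L:HypercohomologyOfLComputation}, hence is surjective at every point. Therefore $\cA_{g,n}\to\cM_{g,n}$ is a submersion.
\end{proof}
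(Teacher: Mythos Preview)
Your proof is correct and follows essentially the same approach as the paper: identify the derivative of the forgetful map with the projection $(\chi,\eta')\mapsto\chi$ onto the Kodaira--Spencer cocycle, which is the surjection $\bH^1(L^\bullet)\to H^1(T_X(-C))$ in the short exact sequence of Lemma~\ref{L:HypercohomologyOfLComputation}. If anything, you spell out slightly more explicitly why this projection is the derivative of the forgetful map.
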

\begin{proof}
Using the notation the previous section, a first order deformation of $(X, \nabla)$ determines an element $(\chi_{\alpha \beta}, \eta_\alpha) \in C^1(T_X(-C)) \oplus C^0(\Omega_X(C))$. Projection to the first factor induces the map $\bH^1(L^\bullet) \ra H^1(T_X(-C))$ given in the sequence in Lemma \ref{L:HypercohomologyOfLComputation}. This is precisely the derivative of the map $\cA_{g,n} \ra \cM_{g,n}$ and it is a surjection by Lemma \ref{L:HypercohomologyOfLComputation}.
\end{proof}

\begin{proof}[Proof of Theorem~\ref{T:coderivative}]
  Consider the  short exact sequence of complexes,
  \begin{center}
    \begin{tikzcd}
      0 \arrow{r}  &  \Omega_X(C)  \arrow{r} & \Omega_X(C)  \arrow{r}  & 0  \arrow{r}  & 0  \\
      0 \arrow{r} \arrow{u} & 0  \arrow{r}  \arrow{u} & \cO_X \arrow{r}  \arrow{u} & \cO_X \arrow{r}  \arrow{u}  & 0., \arrow{u}  \\
    \end{tikzcd}
  \end{center}
  Mapping it to the one in the proof of Lemma
  \ref{L:HypercohomologyOfLComputation} induces the following
  morphism of short exact sequences
  \begin{center}
    \begin{tikzcd}
      0 \arrow{r} & H^0(\Omega_X(C)) \arrow{r} \arrow{d}{-\frac{1}{2\pi i}\mathrm{id}} & \bH^1(L^\bullet) \arrow{r} \arrow{d}{D \THol} &   H^1(T_X(-C)) \arrow{r} \arrow{d}{\frac{1}{2\pi i}\nabla^*} & 0  \\
      0 \arrow{r} & H^0(\Omega_X(C)) \arrow{r} & \bH^1(\Omega_X^\bullet(C)) \arrow{r}  &   H^1(\cO_X) \arrow{r} & 0. 
    \end{tikzcd}
  \end{center}

  Using the notation of the previous two sections, let $U_\alpha$ be
  an open cover of $X$ and let $(v_{\alpha \beta})$ be any closed
  $1$-cocycle in $C^1(T_X(-C))$. Let $\omega$ be a holomorphic
  $1$-form and notice that
  $\langle \omega, v_{\alpha \beta} \rangle := (\omega(v_{\alpha
    \beta}))$ is $1$-cocycle in $C^1(\cO_X)$. The long exact sequence
  associated to the short exact sequence
  $0 \ra \bC_X \ra \cO_X \xrightarrow{d} \Omega_X \ra 0$ contains the segment
\[ H^1(\cO_X) \xrightarrow{d} H^1(\Omega_X) \ra H^2(\bC_X) \ra 0 \]
where the middle arrow is an isomorphism and hence the first arrow is the zero map. Therefore, $0 = d\langle \omega, v_{\alpha \beta} \rangle = \langle \nabla \omega, v_{\alpha \beta} \rangle + \langle \omega, \nabla^* v_{\alpha \beta} \rangle.$  It follows that the dual of the right arrow in the above diagram is $-\frac{1}{2\pi i}\nabla$.
\end{proof}

\section{The isoresidual foliation}
\label{sec:isoresidual}

In this section, we prove Theorem~\ref{T:SubmersionWithResidue} and
give an alternate proof of Theorem~\ref{T:Veechsubmersion} by
analyzing a spectral sequence containing $D\THol$ and $D\Res_\Gamma$ as
differentials.

\paragraph{Translation vector fields.}

As preparation,
we analyze the kernel and cokernel of $\nabla^*\colon T_X(-C)\to \mathcal{O}_X$.
We define the sheaf of \emph{translation vector fields} on $X$ to be
the kernel $\trans= \ker \nabla^*$, and we calculate the cokernel in
the proposition below.

Let $W$ be the one-dimensional vector space of germs of flat $1-$forms
along the tree $\Gamma$, and let $W^*_{P_\bZ}$ denote the sheaf $\bigoplus_{p\in P_\bZ}W^*_p$, where
$W^*_{p}$ is the skyscraper sheaf at $p$ with value $W^*$.   
Let $R_\Gamma\colon \mathcal{O}_X\to W_{P_\bZ}$ denote the
homomorphism sending a function $f$ defined
near an integral pole $p$ to the linear map  $\omega\mapsto\Res_{p} f \omega$. 

\begin{prop}\label{P:TransExact}
  The kernel $\trans$ of $\nabla^*$ is
  a sheaf of $\cx$-vector spaces whose stalks are $1$-dimensional over
  the ordinary points and the set $P_\zed$ of poles of integral order
  and are $0$-dimensional elsewhere.  

  The map $R_\Gamma$ induces an isomorphism  $\mathrm{coker}(\nabla^*)\to W^*_{P_\bZ}$.  In other
  words, the following sequence is exact:
  \begin{equation}
    \label{eq:thissequenceisexact}
    \xymatrix{
      0\ar[r] &\trans\ar[r] & T_X(-C) \ar[r]^{\nabla^*}&\mathcal{O}_X \ar[r]^{R_\Gamma} & W^*_{P_\bZ} \ar[r]& 0}
  \end{equation}
\end{prop}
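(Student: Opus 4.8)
The statement is entirely local: the four sheaves in \eqref{eq:thissequenceisexact} and the maps among them are local, so it suffices to check exactness on stalks, which I would do by writing down and solving one first-order ODE. Fix a point of $X$, choose a local coordinate $z$ centered there, trivialize $\Omega_X$ by $dz$, and write $\nabla(dz) = \Gamma\,dz^2$ with $\Gamma = \tfrac{r}{z} + g(z)$, $g$ holomorphic and $r = \Res_c\nabla$ (so $r = 0$ and $\Gamma$ holomorphic at an ordinary point). Recall that a cone point $c$ is an integral pole exactly when $r$ is a positive integer, its order being $-r$. In the frame $\partial_z$ the dual connection is $\nabla^*(\partial_z) = -\Gamma\,dz\otimes\partial_z$, and unwinding the contraction $T_X(-C)\otimes\Omega_X(C) = \mathcal{O}_X$ shows that $\nabla^*$ carries a local section $v = \phi\,\partial_z$ of $T_X(-C)$ (so $\phi$ holomorphic and vanishing at the center) to the holomorphic function $\nabla^*v = \phi' - \Gamma\phi$. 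Since $\nabla^*$ is $\cx$-linear, $\trans = \ker\nabla^*$ is a subsheaf of $\cx$-vector spaces.

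I would first read off the kernel. The homogeneous equation $\phi' - \Gamma\phi = 0$ has solutions $\phi = \lambda\,z^r e^{G}$, $\lambda\in\cx$, with $G$ a holomorphic primitive of $g$. At an ordinary point this is a nowhere-vanishing holomorphic function, unique up to scale and unconstrained, so $\trans$ has a $1$-dimensional stalk; at a cone point, $z^r e^G$ is single-valued, holomorphic, and vanishing at the center precisely when $r$ is a positive integer, i.e. at integral poles, and otherwise the only section of $T_X(-C)$ in the kernel is $0$. This yields the claimed stalks of $\trans$ and exactness of \eqref{eq:thissequenceisexact} at $\trans$ and at $T_X(-C)$.

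Next I would handle the cokernel. Variation of parameters solves $\phi' - \Gamma\phi = \psi$, for a holomorphic germ $\psi$, by $\phi = z^r e^{G}(\lambda + \int \psi\,z^{-r}e^{-G}\,dz)$. Expanding $\psi e^{-G} = \sum_{k\geq 0} a_k z^k$, the antiderivative is single-valued unless $\sum_k a_k z^{k-r}$ has a nonzero $z^{-1}$-coefficient, which happens only when $r$ is a positive integer and then equals $a_{r-1}$. So when the center is not an integral pole, a single-valued solution always exists, and a short check shows it may be taken to be a section of $T_X(-C)$; hence $\nabla^*$ is surjective on stalks there, matching the vanishing of $W^*_{P_\zed}$. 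At an integral pole $p$, by contrast, $\psi$ lies in the image of $\nabla^*$ exactly when $a_{r-1} = 0$; but $W$ is spanned by a flat $1$-form which near $p$ equals $z^{-r}e^{-G}\,dz$ up to scale, and the value of $R_\Gamma(\psi)|_p$ on it is $\Res_p(\psi\cdot z^{-r}e^{-G}\,dz) = a_{r-1}$, so $\ker(R_\Gamma)_p = \image(\nabla^*)_p$; since $a_{r-1}$ takes every value in $\cx$ as $\psi$ varies, $R_\Gamma$ maps onto the skyscraper stalk $W^*_p$. This gives exactness at $\mathcal{O}_X$ and at $W^*_{P_\zed}$. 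Rescaling the flat form changes nothing here, so the sequence does not depend on the choice of tree $\Gamma$, which only serves to pin down the stalks of $W$ compatibly.

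The difficulty is essentially bookkeeping: fixing the sign conventions (residue versus order, and the sign in the dual connection), and checking that the single computation $\phi = z^r e^{G}(\lambda + \int\psi\,z^{-r}e^{-G}\,dz)$ uniformly covers the four cases — ordinary point, zero, non-integral pole, integral pole — and that in the last it reproduces exactly the residue pairing defining $R_\Gamma$. There is no analytic subtlety beyond these explicit Laurent/Puiseux expansions, which converge because we sit at a regular singular point.
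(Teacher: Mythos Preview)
Your proof is correct and follows essentially the same approach as the paper: both reduce to solving the first-order ODE $\nabla^* v = g$ in a local coordinate, writing the explicit solution $v = \omega^{-1}\int g\,\omega$ (your variation-of-parameters formula $\phi = z^r e^G(\lambda + \int \psi z^{-r}e^{-G}\,dz)$), and reading off when it is single-valued and lies in $T_X(-C)$. Your treatment is in fact slightly more careful than the paper's, which lumps the integral cases together without separating positive and nonpositive $k$; your explicit identification of the obstruction with the coefficient $a_{r-1}$ and its match with $\Res_p(\psi\cdot z^{-r}e^{-G}\,dz)$ makes the cokernel computation cleaner.
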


\begin{proof}
    Away from cone points, this follows immediately from the existence
  and uniqueness of solutions to first order ODEs.

  Consider a cone point $p$ of order $k$.  In local coordinates
  centered at $p$, we have
  $\nabla^*\deriv{}{z}= \Gamma= \frac{k}{z} + \Gamma_0$, where $\Gamma_0$
  is holomorphic near $0$.  In these coordinates, a flat one-form
  $\omega$ is given by
  $e^{\int \Gamma\, dz}\, dz= z^ke^{\int \Gamma_0\, dz}\,
  dz$, and $\omega^{-1}$ is a flat vector field near $p$, which represents a
  single-valued vector field vanishing at $p$ exactly when $k$ is a
  negative integer.

  For exactness at $\mathcal{O}_X$, consider the ODE $\nabla^* (v) = g$, where $g$ is 
  holomorphic near $0$.  A solution is given
  explicitly by
  \begin{equation*}
    v = \omega^{-1} \int  g \omega
  \end{equation*}
  When $k$ is not
  integral, while the integral exists only as a multivalued function,
  the resulting $f$ is single-valued and vanishes at $0$ as desired.
  When $k$ is integral, the integral exists only when $\Res_{p} g\omega
  = 0$.  In other words, the image of $\nabla^*$ at $p$ is the kernel of
  $g\mapsto \Res_{p} g\omega$.
\end{proof}

There is a functor from the category of sheaves on a topological space
$U$ to the category of abelian groups that sends a sheaf $\cF$ to
$H^0_c(U, \cF)$, the compactly supported sections of $\cF$ on
$U$. \emph{Cohomology with compact support} are the associated right
derived functors. If $U$ is an open and dense subset of a compact
space $X$, with inclusion map $\iota: U \hookrightarrow X$, then
$H^i_c(U, \cF) \cong H^i(X, \iota_! \cF)$ where $\iota_! \cF$ is the
\emph{extension by zero sheaf}, which is the sheaf whose sections on
an open set $V$ are the elements of $H^0(V \cap U, \cF)$ whose
supports don't intersect $X \setminus U$. For a proof see
\cite[Proposition 4.7.1]{Harder-AG}. Another description of $\iota_!
\cF$ is the unique sheaf on $X$ whose stalks at $x \in X$ are the same
as the stalk of $\cF$ when $x \in U$ and are $0$ otherwise (see \cite[Lemma~59.70.4]{stacks-project}). 

\begin{lem}\label{L:H2Trans}
$h^2(\trans) = 1$ if $X$ is a translation surface and $0$ otherwise. 
\end{lem}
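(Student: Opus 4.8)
The plan is to recognize $\trans$ as the extension by zero of a rank-one local system on an open subsurface of $X$, and then to compute its top cohomology by Poincar\'e--Verdier duality. Set $D := C \setminus P_\zed$ (the zeros together with the non-integral poles) and $U := X \setminus D$, with inclusion $\iota\colon U \hookrightarrow X$. I claim $\trans = \iota_! L$ for a rank-one local system $L$ on $U$. By Proposition~\ref{P:TransExact}, $\trans$ has vanishing stalks along $D$, so it suffices to show $\trans|_U$ is locally constant of rank one. Over the ordinary locus $X \setminus C$ this is clear, since $\trans$ is there the sheaf of $\nabla^*$-flat vector fields. Near a pole $p \in P_\zed$ of integer order $k \le -1$, the residue of $\nabla$ is the integer $-k$, so the local monodromy of flat one-forms, and hence of flat vector fields, around $p$ is $e^{2\pi i k} = 1$; thus the flat vector field $\omega^{-1}$, which vanishes to order $-k \ge 1$ at $p$ by the computation in the proof of Proposition~\ref{P:TransExact}, is single-valued on a full neighborhood of $p$ and trivializes $\trans$ there. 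Hence $\trans|_U$ is a rank-one local system $L$, and (comparing stalks) $\trans = \iota_! L$.

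By the identity $H^i_c(U,\cF) \cong H^i(X,\iota_!\cF)$ recalled just above the lemma, we are reduced to computing $H^2_c(U, L)$. Since $U$ is a connected oriented $2$-manifold, Poincar\'e--Verdier duality gives a natural isomorphism $H^2_c(U, L) \cong H^0(U, L^\vee)^*$, and $H^0(U,L^\vee)$ is the space of global flat sections of the dual local system, which is one-dimensional if $L$ is the trivial local system and zero otherwise.

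It remains to see that $L$ is trivial precisely when $X$ is a translation surface. The monodromy of $L$ is, up to inversion, the holonomy character $\chi$ of $(X, C, \nabla)$, which factors through $\pi_1(U)$ because its values on loops around the points of $P_\zed$ are trivial. So $L$ is trivial iff $\chi$ is trivial on $\pi_1(U)$, equivalently on $\pi_1(X \setminus C)$. By Theorem~\ref{thm:regulariffmoderate}, a single-valued flat section of $\nabla$ over $X \setminus C$ has moderate growth at each cone point and hence extends to a flat meromorphic one-form on $X$; thus $\chi$ is trivial iff $X$ carries a flat meromorphic one-form, i.e.\ iff $X$ is a translation surface. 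Combining the three reductions gives $h^2(\trans) = 1$ when $X$ is a translation surface and $h^2(\trans) = 0$ otherwise.

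The one point requiring care is the local identification of $\trans$ near the integral poles: one must use that integrality of the residue forces the local monodromy to be trivial, so that $\trans$ is genuinely locally constant there rather than merely having one-dimensional stalks; granting this, $\trans = \iota_! L$ and the duality computation is formal. One can instead extract the same count from the long exact cohomology sequence of \eqref{eq:thissequenceisexact}, using Serre duality to identify $\coker\big((\nabla^*)_*\colon H^1(T_X(-C)) \to H^1(\mathcal{O}_X)\big)$ with the dual of the (at most one-dimensional) space of flat holomorphic one-forms, but the argument via compactly supported cohomology is shorter.
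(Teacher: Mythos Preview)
Your proof is correct and takes essentially the same approach as the paper: realize $\trans$ as (a short exact sequence away from) the extension by zero of a rank-one local system on an open subsurface, apply Poincar\'e duality to reduce $H^2_c$ to $H^0$ of the dual, and observe that this is nonzero precisely when the holonomy is trivial. The only variation is cosmetic: the paper removes all of $C$ and handles the resulting skyscraper cokernel via a short exact sequence, whereas you remove only $D = C\setminus P_\zed$ and check directly that $\trans$ is already locally constant near the integral poles, so that $\trans = \iota_! L$ on the nose and no short exact sequence is needed.
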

\begin{proof}
We have a short exact sequence,
\[ 0 \ra \iota_! \trans[X \setminus C] \ra \trans \ra
  \cS \ra 0, \] where $\iota \colon X \setminus C \ra X$ is the
inclusion map, and $\cS$ is a sum of skyscraper sheaves
supported over $C$. The long exact sequence on cohomology
yields
\[ H^2(\trans) \cong H^2(\iota_! \trans[X \setminus C])
  \cong H_c^2(\trans[X \setminus C]) \cong H^0(\trans[X
    \setminus C]^\vee), \]
where the last isomorphism is by Poincar\'e
duality for local coefficient systems (see \cite[Theorem
4.8.9]{Harder-AG}). Note that
$h^0(\trans[X \setminus C]^\vee) = 1$ if and only if
$\trans[X \setminus C]^\vee$ and hence
$\trans[X \setminus C]$ is trivial, which occurs if and only if
$X$ is a translation surface.
\end{proof}

\paragraph{Veech submersion again.}

As preparation for studying the isoresidual foliation, we give a
second proof of Theorem~\ref{T:Veechsubmersion} via analyzing the
exact sequence \eqref{eq:thissequenceisexact}.

\begin{proof}[Second proof of Theorem~\ref{T:Veechsubmersion}]
   Consider the  double complex of sheaves,
   \begin{equation}
     \label{eq:double_complex}
    \xymatrix{
      0 \ar[r] & 0 \ar[r] & \Omega_X(C) \ar[r]^{-\frac{1}{2\pi i}\cdot} & \Omega_X(C) \ar[r] & 0 \ar[r] & 0 \\
      0 \ar[r] & \trans \ar[r]\ar[u] & T_X(-C) \ar[r]^{^{\frac{1}{2 \pi i} \nabla^*}}\ar[u]^{\mathcal{L}_\nabla} & \mathcal{O}_X \ar[r]^{R_\Gamma}\ar[u]^d & W^*_{P_\bZ} \ar[r]\ar[u] & 0.
    }
  \end{equation}
  Replacing each column with the total complex 
  yields a new double complex which induces a spectral sequence where $E_{p,q}^1$ is the $q$th hypercohomology of the $p$th column:

  \begin{center}
    \begin{tikzpicture}
      \matrix (m) [matrix of math nodes,
      nodes in empty cells,nodes={minimum width=5ex,
        minimum height=5ex,outer sep=-5pt},
      column sep=1ex,row sep=1ex]{
        &      &     &     & &\\
        2     & H^2(\trans) & 0& 0& 0&\\
        1     & H^1(\trans)  &  \mathbb{H}^1(L^\bullet)  & H^1(X \setminus C) & 0 & \\
        0     &  0  & 0 &  H^0(X \setminus C) &   \Hom(W,\bC^{P_\bZ}) &  \\
        \quad\strut &   0  &  1  &  2  & 3 &\strut \\};
    \draw[thick] (m-1-1.east) -- (m-5-1.east) ;
    \draw[thick] (m-5-1.north) -- (m-5-5.north) ;
    \end{tikzpicture}
  \end{center}
  
  The differentials $d_1$ are the maps on hypercohomology induced by
  the horizontal arrows of \eqref{eq:double_complex}.  Since the rows
  of \eqref{eq:double_complex} are exact, this spectral sequence
  converges to $0$.  Our goal is to show that
  $d_1\colon E_{1,1}^1\to E_{2,1}^1$ is onto---equivalently
  $E_{2,1}^2=0$---if $X$ is not a translation surface of holomorphic
  type. If $X$ is not a translation surface then
  $h^2(\trans) = 0$ by Lemma \ref{L:H2Trans} so
  $E_{2,1}^2 = E_{2,1}^\infty = 0$.

  Suppose next that $X$ is a translation surface without poles. In
  that case, $E_{3,0}^1=0$, so for $E_{0,2}^\infty$ and
  $E_{2,1}^\infty$ to be $0$, we must have that
  $d_2\colon E_{0,2}^2=H^2(\trans)\to \mathrm{coker}(D\THol)$ is an
  isomorphism.  Therefore $D\THol$ has a one-dimensional cokernel by
  Lemma \ref{L:H2Trans}.

  Suppose finally that $X$ is a translation surface with poles.  The image of
  $d_1\colon H^0(X\setminus C)\to \Hom(W,\bC_{P_\bZ})$ is spanned the map
  $r\colon W\to \cx^{P_\zed}$, sending a flat one-form to its tuple of
  residues. By the Residue Theorem, this image is contained in $\Hom(W, H)$, where
  $H\subset \cx^{P_\zed}$ is the subspace of vectors whose coordinates
  sum to $0$.  In particular, $E_{3,0}^2$ is
  nonzero. We claim that $d_2 E_{1,1}^2$ is contained in the image of
  $\Hom(W, H)\to E_{3,0}^2$. If this is the case, then
  $d_3\colon E_{0,2}^3\to E_{3,0}^3$ must nonzero, so $E_{0,2}^3$ must
  be nonzero. This means that $d_2\colon E_{0,2}^2\to E_{2,1}^2$ must
  be $0$, since $E_{0,2}^2 = H^2(\trans)$ is one-dimensional. It
  follows that $E_{2,1}^2 = E_{2,1}^\infty=0$ as desired.

  The claim will follow from a geometric interpretation of
  $d_2\colon E_{1,1}^2\to E_{3,0}^2$ as a derivative of residues.  
  Consider a first order deformation
  $(\mathcal{X}\to B, \tnabla)$ of $(X, \nabla)$ in the kernel of
  $D\THol\colon\mathbb{H}^1(L^\bullet)\to H^1(X\setminus C)$.  As in the standard
  deformation theory setup, we take an open cover $U_\alpha$ together
  with nonzero holomorphic one-forms $\omega_\alpha$ on each
  $U_\alpha$ so that
  \begin{equation*}
    \tnabla \omega_\alpha =(\eta_\alpha+\epsilon\eta_\alpha')\otimes\omega_\alpha.  
  \end{equation*}
  The
  deformation is encoded by the cocycle $(\chi, \eta_\alpha')\in
  C^1(T_X(-C))\oplus C^0(\Omega_X)$.  The flat meromorphic one-form
  $\omega$ may then be written as $\omega=f_\alpha \omega_\alpha$ for
  some meromorphic functions $f_\alpha$.  (For this proof,
  $\omega$ should be a global meromorphic form, but for later
  use, we will also allow $(X, \nabla)$ to have nontrivial holonomy,
  in which case, $\omega$ will only be defined on a
  neighborhood of $\Gamma$, which may be taken to be a union of some
  of the $U_\alpha$.)

  Since $\omega$ is flat, we have on $U_\alpha\times B$,
  \begin{equation*}
    \tnabla \omega = df_\alpha\otimes\omega_\alpha +
    f_\alpha(\eta_\alpha + \epsilon \eta_\alpha')\otimes\omega_\alpha
    = \epsilon  \eta_\alpha'\otimes f_\alpha\omega_\alpha = \epsilon
    \eta_\alpha'\otimes \omega.
  \end{equation*}
  Since our deformation is in the kernel of $D\THol$.
  The cocycle
  $(\nabla\chi, -\eta')$ is trivial in $\mathbb{H}^1(\Omega_X^\bullet(
  C))$, so there is a cochain $\phi\in C^0(\mathcal{O}_X)$ such that
  \begin{equation}\label{E:d2Definition}
  \begin{aligned}
    \nabla^* \chi &= \delta \phi\\
    -\eta'&=d\phi. 
    \end{aligned}
  \end{equation}
  The differential $d_2$ is then given by 
  \[ d_2(\chi, \eta') = R_\Gamma(\phi) = (\omega\mapsto\mathrm{Res}_p
    \phi \omega)_{p\in P_\zed}.\] Fixing $c \in \bC$ and replacing
  each $\phi_\alpha$ with $\phi_\alpha + c$ yields another solution of
  Equation \ref{E:d2Definition}. Therefore, the value of
  $d_2(\chi, \eta')$ is only defined up to addition of a multiple of
  $r$.
    
  Note that if $p\in U_{\alpha\beta}$ is a pole, then
\[    \mathrm{Res}_p(\phi_\beta - \phi_\alpha)\omega = \mathrm{Res}_p(\nabla^*\chi_{\alpha\beta})\omega=    - \mathrm{Res}_p d(\iota_{\chi_{\alpha\beta}}\omega)=0,
\]  so this expression is well-defined.

  Define a meromorphic one-form $\tilde{\omega}_\alpha$ on
  $U_\alpha\times B$ by
  \begin{equation*}
    \tilde{\omega}_\alpha = (1+ \epsilon \phi_\alpha) \omega.
  \end{equation*}

  Since $(\phi_\beta - \phi_\alpha)\omega= (\nabla^*\chi)\omega =
  L_{\chi_{\alpha\beta}}\omega$ by \eqref{eq:liederivativeofomega}, the forms $\tilde{\omega}_\alpha$ glue to a global one-form
  $\tilde{\omega}$ on $\mathcal{X}$, and moreover,
  \begin{equation*}
    \tnabla \tilde{\omega} = \epsilon d\phi_\alpha\otimes \omega +
    (1+\epsilon\phi_\alpha)\epsilon\eta_\alpha'\otimes\omega =
    \epsilon(d\phi_\alpha+\eta_\alpha')\otimes\omega = 0,
  \end{equation*}
  so $\tilde{\omega}$ is flat.  We then have for each $p\in P_\zed$,
  \begin{equation*}
    \Res_p \tilde{\omega} = \Res_p\omega + \epsilon d_2(\chi, \eta')(\omega),
  \end{equation*}
  so $d_2(\chi, \eta')(\omega)$ can be interpreted as the derivatives of the
  residues of a flat deformation of $\omega$.  By the residue theorem, the sum of the coordinates of 
  $d_2(\chi, \eta')$ are zero.
\end{proof}

Consider now a leaf $\mathcal{L}\subset \TAres$ of the isoholonomic
locus through $X$, an affine surface which is not a translation
surface and has at least one integral pole of nonzero residue.  The
last argument of the previous proof allows us to express the derivative
$D(\Res_\Gamma|_\mathcal{L})$ in terms of the differential $d_2$.
More precisely, since $d_1$ is $D(\THol)$ on $E_{1,1}^1$, and
$T_X\mathcal{L}= \ker D(\THol)$ we have a quotient map $\pi\colon
T_X\mathcal{L}\to E_{1,1}^2$.  If we regard $W$ as embedded in
$\cx^{P_\zed}$ by the residue map $r\colon W\to \cx^{P_\zed}$, then
the $E_{3,0}^2$ term is the quotient $\Hom(W, \cx^{P_\zed})/ \Hom(W,
W) \isom \Hom(W, \cx^{P_\zed}/W)$.  This last space is naturally the
tangent space to $\proj^{P_\zed}$ at the projective class of $\Res_\Gamma(X)$.
In terms of these identifications, we have the following calculation
of $D(\Res_\Gamma|_\mathcal{L})$.

\begin{lem}
  \label{L:d2derivative}
  Suppose that $X$ is an affine surface which is not a translation
  surface and which has at least one integral pole of nonzero
  residue. Let $\cL$ be the leaf of the isoholonomic locus through
  $X$. Then the derivative at $X$ of the restriction of $\Res_\Gamma$ to
  $\mathcal{L}$ may be written in terms of the differential $d_2$ on
  the $E^2$ page of the above spectral sequence as
  \begin{equation*}
    D(\Res_\Gamma|_\mathcal{L}) = d_2\circ \pi.
  \end{equation*}
\end{lem}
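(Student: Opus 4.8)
The plan is to reuse the geometric interpretation of $d_2$ developed in the second proof of Theorem~\ref{T:Veechsubmersion}: given a tangent vector to $\mathcal{L}$, represented by a first-order deformation $(\mathcal{X}\to B,\tnabla)$ lying in $\ker D\THol$, I will produce an explicit flat deformation $\tilde\omega$ of the flat one-form $\omega_\Gamma$ defined near $\Gamma$, and read off that the derivative of the residue tuple is exactly $R_\Gamma(\phi)$, where $\phi\in C^0(\mathcal{O}_X)$ is the cochain solving \eqref{E:d2Definition}. The essential observation is that in the previous proof this local construction was carried out assuming only that $(X,\nabla)$ had trivial holonomy \emph{near $\Gamma$} (the parenthetical remark in that proof explicitly flags that $\omega$ need only be defined on a neighborhood of $\Gamma$); so nothing new is needed on the analytic side.

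First I would fix the identifications. Since $d_1$ on $E^1_{1,1}$ is $D\THol$ (by Theorem~\ref{thm:holonomy_derivative_intro}, the horizontal map of \eqref{eq:double_complex} induces $-\frac{1}{2\pi i}$ times the morphism of complexes there, which is $D\THol$), the $E^2_{1,1}$ term is $\ker D\THol/\,\mathrm{im}(\text{incoming }d_1) = T_X\mathcal{L}/(\cdots)$, giving the quotient $\pi\colon T_X\mathcal{L}\to E^2_{1,1}$. On the target side, $E^1_{3,0}=\Hom(W,\cx^{P_\zed})$ and the incoming $d_1$ from $E^1_{2,0}=H^0(X\setminus C)$ has image spanned by the residue map $r\colon W\to\cx^{P_\zed}$, so $E^2_{3,0}=\Hom(W,\cx^{P_\zed})/\Hom(W,W)\cong\Hom(W,\cx^{P_\zed}/W)$, which is canonically $T_{[\Res_\Gamma(X)]}\proj^{P_\zed}$. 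I must check these are the \emph{same} identifications used to make sense of $D(\Res_\Gamma|_\mathcal{L})$ as a map into that tangent space — this is bookkeeping but needs to be stated cleanly.

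Next, the core computation. Given $V\in T_X\mathcal{L}=\ker D\THol$, represented as in the second proof by a cocycle $(\chi,\eta')\in C^1(T_X(-C))\oplus C^0(\Omega_X)$ with $(\nabla^*\chi,-\eta')=D^0(\phi)$ for some $\phi\in C^0(\mathcal{O}_X)$, the previous proof constructs $\tilde\omega_\alpha=(1+\epsilon\phi_\alpha)\omega$ on $U_\alpha\times B$; these glue (using \eqref{eq:liederivativeofomega}) to a flat one-form $\tilde\omega$ on the restriction of $\mathcal{X}$ to a neighborhood of $\Gamma$, and $\Res_p\tilde\omega=\Res_p\omega+\epsilon\, d_2(\chi,\eta')(\omega)$ for each $p\in P_\zed$. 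Since $\tilde\omega$ is a flat one-form near $\Gamma$ in the deformed surface, its residue tuple (up to scale) is by definition $\Res_\Gamma$ evaluated along $V$; differentiating at $\epsilon=0$ gives precisely $D(\Res_\Gamma|_\mathcal{L})(V)=d_2(\chi,\eta')\bmod r = d_2(\pi(V))$. The ambiguity "up to a multiple of $r$" in $d_2$ (noted after \eqref{E:d2Definition}, coming from replacing $\phi_\alpha$ by $\phi_\alpha+c$) corresponds exactly to the projectivization in $\Res_\Gamma$ and to passing to $E^2_{3,0}$, so the formula is well-posed.

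The main obstacle I anticipate is not any hard estimate but making the matching of identifications airtight: verifying that $d_2$ descends to a well-defined map $E^2_{1,1}\to E^2_{3,0}$ through the quotient $\pi$ (i.e.\ that it kills the image of the incoming $d_1$ into $E^1_{1,1}$, which it does automatically on the $E^2$ page), and that the canonical iso $E^2_{3,0}\cong T_{[\Res_\Gamma(X)]}\proj^{P_\zed}$ is the one implicit in the definition of $D\Res_\Gamma$ — in particular that the flat form $\tilde\omega$ is the \emph{same} $\omega_\Gamma$-type form used to define $\Res_\Gamma$ on nearby surfaces, and that the leaf $\mathcal{L}$ and its tangent space $\ker D\THol$ are as described. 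Once these compatibilities are recorded, the lemma follows immediately from the displayed computation in the second proof of Theorem~\ref{T:Veechsubmersion}, with essentially no further work.
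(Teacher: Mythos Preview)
Your proposal is correct and follows essentially the same approach as the paper: the lemma is not given a separate proof there, but is justified by the paragraph preceding its statement together with the geometric interpretation of $d_2$ as a derivative of residues developed in the second proof of Theorem~\ref{T:Veechsubmersion} (including the parenthetical remark that $\omega$ need only be defined near $\Gamma$). Your care in matching the identifications of $E^2_{1,1}$ and $E^2_{3,0}$ with $T_X\mathcal{L}/(\cdots)$ and $T_{[\Res_\Gamma(X)]}\proj^{P_\zed}$ is exactly what the paper does in that preceding paragraph.
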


\begin{proof}[Proof of Theorem~\ref{T:SubmersionWithResidue}]
  We first show that $D_X(\Hol\times \Res_\Gamma)$ is onto.  Since
  $D_X\Hol$ is onto, it suffices to establish that the restriction of
  $D_X \Res_\Gamma$ to $\ker( D_X\Hol)$ is onto.  By
  Lemma~\ref{L:d2derivative}, it suffices to show the surjectivity of 
  $d_2\colon E_{1,1}^2\to E_{3,0}^2$.   This map is in fact an isomorphism, since the spectral
  sequence converges to zero, and
  $H^2(\trans)=E_{0,2}^3$ by Lemma~\ref{L:H2Trans}.
  
  To see that the foliation defined by the fibers does not depend on the choice of the arcs
  $\Gamma$, suppose $\Gamma' = \{\gamma_2', \ldots, \gamma_k'\}$ is a
  second collection of embedded arcs joining the same points.  Let
  $\omega$ and $\omega'$ be flat one forms defined near $\Gamma$ and
  $\Gamma'$ respectively, chosen to agree near the pole $p_1$.  The
  residues of $\omega$ and $\omega'$ are related by
  \begin{equation*}
    \Res_{p_j} \omega' = \chi(\gamma_j^{-1}\gamma_j')\Res_{p_j} \omega'
  \end{equation*}
  where $\chi$ is the holonomy character of $X$.  It follows that
  $\Res_\Gamma = D\circ \Res_{\Gamma'}$, where $D\colon
  \cx^{P_\zed}\to \cx^{P_\zed}$ is a diagonal map, so the two maps
  have the same fibers.  
\end{proof}

Analyzing the same spectral sequence gives a description of the
tangent bundle of the isoresidual foliation.

\begin{prop}
  The tangent space $T_X \mathcal{L}$ to a leaf $\mathcal{L}$ of the isoresidual
  foliation through a surface $X$ in $\ModAres(\mu)$ is naturally
  identified with $H^1(\trans)$.
\end{prop}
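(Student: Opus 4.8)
The plan is to read $T_X\mathcal{L}$ off the spectral sequence of the double complex~\eqref{eq:double_complex}, using the identifications of its low-degree differentials already established in the second proof of Theorem~\ref{T:Veechsubmersion} and in Lemma~\ref{L:d2derivative}. First I would observe that, since by Theorem~\ref{T:SubmersionWithResidue} the leaves of the isoresidual foliation are the fibers of the submersion $\Hol\times\Res_\Gamma$, and since $\THol$ lifts $\Hol$ along a covering map,
\[
  T_X\mathcal{L}=\ker(D\THol)\cap\ker(D\Res_\Gamma)=\ker\bigl(D(\Res_\Gamma|_{\mathcal{L}_0})\bigr),
\]
where $\mathcal{L}_0\supseteq\mathcal{L}$ is the isoholonomic leaf through $X$, so $T_X\mathcal{L}_0=\ker D\THol$. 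By Lemma~\ref{L:d2derivative} one has $D(\Res_\Gamma|_{\mathcal{L}_0})=d_2\circ\pi$ for the natural quotient $\pi\colon\ker D\THol\to E_{1,1}^2$ and the differential $d_2\colon E_{1,1}^2\to E_{3,0}^2$, so $T_X\mathcal{L}=\pi^{-1}(\ker d_2)$. (Working on the Teichm\"uller cover $\TAres(\mu)$ is harmless, as the projection to $\ModAres(\mu)$ is a local isomorphism.)

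Everything then reduces to two injectivity statements, both obtained by page-chasing the fact that this spectral sequence converges to $0$ (its rows being exact). The first is that $d_2\colon E_{1,1}^2\to E_{3,0}^2$ is injective: nothing enters $E_{1,1}$ past the first page and the only differential leaving it is $d_2$, so $E_{1,1}^\infty=\ker d_2$, forcing $\ker d_2=0$ (this is part of the assertion, used in the proof of Theorem~\ref{T:SubmersionWithResidue}, that $d_2$ here is an isomorphism). Hence $T_X\mathcal{L}=\pi^{-1}(0)=\ker\pi$, and since $E_{1,1}^2=\ker(D\THol)/\image\bigl(d_1\colon H^1(\trans)\to\mathbb{H}^1(L^\bullet)\bigr)$ this means $T_X\mathcal{L}=\image\bigl(d_1\colon H^1(\trans)\to\mathbb{H}^1(L^\bullet)\bigr)$. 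The second is that this $d_1$ is itself injective: nothing enters $E_{0,1}$ and the only differential leaving it past the first page is $d_2\colon E_{0,1}^2\to E_{2,0}^2$, so $E_{0,1}^\infty=\ker(d_2|_{E_{0,1}^2})$; but $E_{2,0}^2=\ker\bigl(H^0(X\setminus C;\cx)\xrightarrow{R_\Gamma}\Hom(W,\cx^{P_\zed})\bigr)=0$, because $R_\Gamma$ sends the constant $1$ to the residue map $r\colon W\to\cx^{P_\zed}$, which is nonzero since $X$ has an integral pole of nonzero residue. Thus $E_{0,1}^2=E_{0,1}^\infty=0$, $d_1$ is injective, and $T_X\mathcal{L}\cong H^1(\trans)$.

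To finish I would note that this identification is natural, since $d_1\colon H^1(\trans)\to\mathbb{H}^1(L^\bullet)$ is by construction the map induced on cohomology by the horizontal arrow $\trans=\ker\nabla^*\hookrightarrow T_X(-C)$ of~\eqref{eq:double_complex}.

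I do not expect a genuine obstacle: the two substantive inputs — that $d_1$ is the sheaf inclusion and that $d_2$ computes $D\Res_\Gamma$ (Lemma~\ref{L:d2derivative}), together with the description of the $E^1$ page — are already available, so only the bookkeeping above remains. The one step that must be stated with care is the vanishing $E_{2,0}^2=0$: this is exactly where the hypothesis that $X$ has an integral pole of nonzero residue is consumed, the hypothesis that $X$ is not a translation surface entering separately through Lemma~\ref{L:d2derivative}.
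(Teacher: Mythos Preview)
Your proposal is correct and follows essentially the same route as the paper's proof: identify $T_X\mathcal{L}$ with $\ker\pi$ via Lemma~\ref{L:d2derivative} and the injectivity of $d_2$, then observe $\ker\pi$ is the image of $H^1(\trans)\to\mathbb{H}^1(L^\bullet)$, and deduce injectivity of this map from $E_{2,0}^2=0$. You have simply spelled out the page-chasing that the paper compresses into the single clause ``because $E_{2,0}^2=0$ whenever $X$ has an integral pole of nonzero residue.''
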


\begin{proof}
  The tangent space to the fiber of $\Hol\times \Res_\Gamma$ through
  $X$ is the kernel of its derivative.  From
  Lemma~\ref{L:d2derivative} and the injectivity of $d_2\colon
  E_{1,1}^2\to E_{3,0}^2$, this is the same as the kernel of
  $\pi\colon T_X\mathcal{L}\to E_{1,1}^2$.  This kernel is the image
  of $H^1(\trans)\to \mathbb{H}^1(L^\bullet)$, and this map is
  injective because $E_{2,0}^2=0$ whenever $X$ has an integral pole of
  nonzero residue.
\end{proof}

By Proposition~\ref{P:TransExact}, we may identify $H^1(\trans)$ with
$H^1_c(X\setminus C'; \cx_\chi)$, where $C'=C\setminus P_\zed$ is the
set of cone points which are not integral poles, and $\cx_\chi$ is the
local coefficient system associated to the holonomy character $\chi$.

Let $\mathcal{F}'_{g,n}(\mu)\subset \ModAres(\mu)$ denote the locus of
flat surfaces, that is, affine surfaces with $S^1$-holonomy.  For such
surfaces, the cup product defines an indefinite Hermitian
form,
\begin{equation*}
  H^1_c(X\setminus C'; \cx_\chi)\otimes \overline{H^1_c(X\setminus C';
    \cx_\chi)}\to H^2_c(X\setminus C'; \cx) \isom \cx,
\end{equation*}
which is nondegenerate by Poincar\'e duality.  This defines a
leafwise indefinite metric for the isoresidual foliation of $\mathcal{F}'_{g,n}(\mu)$, analogous to
\cite{veech93}.

\bibliography{my}{}
\bibliographystyle{halpha}
\end{document}